\begin{document}
\title[\hfilneg
\hfil asymptotically linear fractional Schr\"{o}dinger equations ]
{Positive solutions to some asymptotically linear fractional Schr\"{o}dinger equations}

\author[J. Zhang\,\,,\,\,X. Liu\hfil
\hfilneg]
{Jinguo Zhang,\quad Xiaochun Liu}

\address{Jinguo Zhang\newline
   School of Mathematics \\
  Jiangxi Normal University \\
  330022 Nanchang, China}
\email{yuanxin1027suda@163.com}

\address{Xiaochun Liu \newline
   School of Mathematics and Statistics \\
  Wuhan University \\
  430072 Wuhan, China}
\email{xcliu@whu.edu.cn}

\thanks{Supported by NSFC Grant No.11371282 and
Natural Science Foundation of Jiangxi (No. 20142BAB211002).}
\subjclass[2010]{35J60, 47J30}
 \keywords{Asymptotically linear; Fractional Schr\"{o}dinger equations; Pohozaev identity; Variational methods.}

\begin{abstract}
This paper is devoted to prove the existence and nonexistence of positive solutions for a class of fractional
Schr\"{o}dinger equation in $\mathbb{R}^{N}$ of the form
$$(-\Delta)^{s}u+V(x)\, u=f(u)£¬\quad x\in\mathbb{R}^{N},$$
where $N>2s$, $s\in (0\,,\,1)$ and $(-\Delta)^{s}$ stands for the fractional Laplacian.
We apply a new methods to obtain the existence of positive solutions when $f(u)$ is asymptotically
linear with respect to $u$ at infinity.

 \end{abstract}

\maketitle \numberwithin{equation}{section}
\newtheorem{theorem}{Theorem}[section]
\newtheorem{lemma}{Lemma}[section]
\newtheorem{remark}{Remark}[section]
\newtheorem{proposition}{Proposition}[section]
\newtheorem{corollary}{Corollary}[section]
\newtheorem{definition}{Definition}[section]
\newcommand{\R}{\mathbb{R}^{N}}
\newcommand{\HR}{H^{s}(\mathbb{R}^{N})}

\section{Introduction and main results}
In this paper,  we consider the existence of solutions to the nonlinear
 Schr\"{o}dinger equation with fractional Laplacian
\begin{align*}\label{eq3-1}
\quad\quad\qquad \qquad \qquad \qquad  (-\Delta)^{s}u+V(x)\, u=f(u),\quad x\in\mathbb{R}^{N}.
\quad\quad \quad\quad \qquad \qquad  \text{(FSE)}
\end{align*}
where $s\in (0\,,\,1)$ and  $(-\Delta)^{s}$ stands for the fractional Laplacian.
The potential $V\in C^{2}(\mathbb{R}^{N},\mathbb{R})$ and satisfies
\begin{itemize}
\item[$(V_{1})$] $V_{0}:=\inf\limits_{\mathbb{R}^{N}}V(x)>0$,\, $V_{\infty}=\lim\limits_{|x|\to +\infty}V(x)>0$;
\item[$(V_{2})$] $\langle\nabla V(x), x\rangle\leq 0$ for all $x\in \mathbb{R}^{N}$, where the strict inequality holds on a subset of
positive Lebesgue measure of $\mathbb{R}^{N}$;
\item[$(V_{3})$] $N\, V(x)+\langle\nabla V(x), x\rangle\geq N\,V_{\infty}$ for all $x\in \mathbb{R}^{N}$;
\item[$(V_{4})$] $\langle\nabla V(x), x\rangle+\frac{x\cdot H(x)\cdot x}{N}\leq 0$ for all $x\in \mathbb{R}^{N}$,
where $H$ represents the Hessian matrix of the function $V$.
\end{itemize}

This equation was introduced by Lashin \cite{nl2000,nl2002}, and comes from an expansion of the Feynman path integral from
Brownian-like to L\'{e}vy-like quantum mechanical paths. When $s=1$, the L\'{e}vy dynamics becomes the Brownian dynamics,
and equation (FSE) reduces to the classical Schr\"{o}dinger equation.

The fractional Schr\"{o}dinger equations is an important model in the study
of the fractional quantum mechanics. Recently, this has been widely investigated by many authors in the last decades,
 see \cite{cheng2012,cw2013,dpv2013,fqt2012,fv2013,mw2012,pp2013,s2012,secchi2013,sz2014} and references therein.
In most of the paper mentioned above the existence of positive solutions has been considered under different assumptions
on $V$ and $f$.

 In \cite{fqt2012}, Felmer et al. studied a similar class of equations, in which $V(x)\equiv1$ and the nonlinearity $f$
 has subcritical growth and satisfies the Ambrosetti-Rabinowitz condition, i.e., there exists $\theta>2$ such that
 \begin{equation}\label{eq1-1}
 0<\theta F(x,t)\leq tf(x,t),\quad \forall t>0,\,\,\,\text{a.e.}\,\,x\in \mathbb{R}^{N}.
 \end{equation}
 Using critical point theory, the classical positive solutions are found and some interesting
results on regularity are offered.
In \cite{secchi2013}, several existence results were proved for problem (FSE)
 with more general nonlinearities on the right hand side, see also \cite{cheng2012}.
Secchi\cite{secchi2013} obtained the existence of ground state solution
of (FSE) when $V(x)\to +\infty$ as $|x|\to +\infty$ when (AR)-condition \eqref{eq1-1} holds.
In \cite{dpv2013}, the authors looked for radially symmetric solutions of (FSE) when $V$
and $f$ do not depend explicitly on the space variable $x$.

As is well-known, the (AR)-condition \eqref{eq1-1} implies that the nonlinearity $f$ is superquadric at infinity,
roughly speaking,  $\frac{F(x,t)}{t^2}\to +\infty$ as $|t|\to +\infty$ uniformly in $x\in \mathbb{R}^{N}$,
where $F(x,t)=\int_{0}^{t}f(x,\tau)d\tau$. Note that, in some physical problems,
the nonlinearity term $f$ is asymptotically linear with respect to $t$ at infinity,
 which does not satisfy the (AR)-condition. In this paper, motivated by these consideration,
 the main features of problem (FSE) is that the nonlinearity is asymptotically linear and the associated
 problem at infinity is autonomous.

Throughout this paper, the functional $f:\,\mathbb{R}^{+}\to \mathbb{R}$ is continuous and satisfies:
\begin{itemize}
\item[$(f_{1})$]  $\lim\limits_{t\to 0^{+}}\frac{f(t)}{t}=0$ and $\lim\limits_{t\to +\infty}\frac{f(t)}{t}=1$;
\item[$(f_{2})$] There is a constant $L\in[1\,,\,+\infty)$ such that
$$0<Q(r)\leq L\,Q(t)$$
for all $0<r\leq t$, and $\lim_{|t|\to +\infty}Q(t)=+\infty$,
where $Q(t)=\frac{1}{2}t\,f(t)-F(t)$ and $F(t)=\int^{t}_{0}f(\tau)d\tau$.
\end{itemize}
As we have mentioned, we do not assume the (AR)-condition.

The energy functional associated with problem (FSE) is defined by
$$\mathcal{I}(u)=\frac{1}{2}\int\limits_{\mathbb{R}^{N}}|(-\Delta)^{\frac{s}{2}}u|^{2}dx
+\frac{1}{2}\int\limits_{\mathbb{R}^{N}}V(x)|u|^{2}dx
-\int\limits_{\mathbb{R}^{N}}F(u)dx.$$
Clearly, the condition $(f_1)$ implies that there are $\varepsilon>0$, $2\leq p\leq 2^*_{s}:=\frac{2N}{N-2s}$
and $C(\varepsilon)>0$
such that for all $u\in \HR$, we have
\begin{equation}\label{eq1-2}
|F(u)|\leq \frac{\varepsilon}{2}|u|^{2}+\frac{C(\varepsilon)}{p}|u|^{p}.
\end{equation}
Due to this observation, one can show that the energy functional
$\mathcal{I}$ is well-defined and belongs to $C^{1}(E,\mathbb{R})$.

To the best of our knowledge, we can not find any result in the literature that can be directly applied to our problem (FSE).
In order to deal with problem (FSE), one has to face various difficulties:
 firstly,  we mention that the lack of compactness of the embedding of $H^{s}(\mathbb{R}^{N})$
in the Lebesgue space $L^{p}(\mathbb{R}^{N})$ for $p\in (2\,,\,2^*_{s})$.
This prevents us from using the variational techniques in a standard way.
Secondly, since we does not have  symmetrical  assumptions, a basic step in the study of the Palais-Smale sequence for the
functional $\mathcal{I}$ is very difficult. For this we give a representation theorem for (PS)-sequence and
show that the only obstacle to the compactness is the solutions of the problem  at infinity:
\begin{align*}
\quad\quad\qquad \qquad \qquad \qquad  (-\Delta)^{s}u+V_{\infty}\, u=f(u)\quad \text{in}\,\,\,\mathbb{R}^{N}.
\quad\quad \quad\quad \qquad \qquad  \text{(FSE)}_{\infty}
\end{align*}
Moreover, some new estimates for the fractional elliptic problem are needed to be re-established.

The main results of this paper are stated as follows:
\begin{theorem}\label{th1-1}
Assume $(V_1)-(V_4)$ and $(f_1)-(f_2)$ hold. Then
 $$p=\inf\{\mathcal{I}(u):\,u\in \mathcal{P}\}$$
is not a critical value for the functional $\mathcal{I}$. In particular, the infimum $p$ is not achieved.
\end{theorem}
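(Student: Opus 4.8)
The plan is to reduce everything to a comparison between (FSE) and its autonomous limit problem (FSE)$_\infty$. Denote by $\mathcal{I}_\infty$ the energy functional of (FSE)$_\infty$, obtained from $\mathcal{I}$ by replacing $V(x)$ with the constant $V_\infty$, and let $\mathcal{P}$, $\mathcal{P}_\infty$ be the Pohozaev manifolds of $\mathcal{I}$ and $\mathcal{I}_\infty$, i.e. the sets of nontrivial functions annihilating
$$
P(u)=\frac{N-2s}{2}\int_{\R}|(-\Delta)^{\frac s2}u|^2\,dx+\frac12\int_{\R}\big(NV(x)+\langle\nabla V(x),x\rangle\big)|u|^2\,dx-N\int_{\R}F(u)\,dx,
$$
respectively $P_\infty$ (same expression with $NV(x)+\langle\nabla V(x),x\rangle$ replaced by $NV_\infty$). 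I will work with the dilations $u_t(x):=u(x/t)$, $t>0$. The first thing I would record (as a preliminary fibering lemma, using $(f_1)$, $(f_2)$ and, crucially, $(V_4)$) is that for every admissible $u\neq 0$ the map $t\mapsto\mathcal{I}(u_t)$ is strictly increasing and then strictly decreasing, with a unique maximum point $t(u)>0$ characterized by $u_{t(u)}\in\mathcal{P}$; in particular $\mathcal{I}(u)=\max_{t>0}\mathcal{I}(u_t)$ on $\mathcal{P}$, and analogously for $\mathcal{I}_\infty$. The role of $(V_4)$ is precisely to make $t\mapsto\int_{\R}(NV(tx)+\langle\nabla V(tx),tx\rangle)|u|^2\,dx$ nonincreasing, which yields the uniqueness. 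I would also record that $(V_2)$ forces $V(x)\ge V_\infty$ on $\R$ (integrate $\langle\nabla V(tx),x\rangle\le 0$ along rays and let $t\to\infty$), so $\mathcal{I}(v)\ge\mathcal{I}_\infty(v)$ for every $v$, and that on $\mathcal{P}$ one has $\mathcal{I}(u)=\frac sN\int_{\R}|(-\Delta)^{\frac s2}u|^2\,dx-\frac1{2N}\int_{\R}\langle\nabla V,x\rangle|u|^2\,dx>0$.

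\emph{Step 1 (the limit problem).} I would first invoke the known fact that $p_\infty:=\inf_{\mathcal{P}_\infty}\mathcal{I}_\infty>0$ is attained at a positive radial ground state $w$ of (FSE)$_\infty$: since the limit problem is autonomous, the Pohozaev level coincides with the mountain-pass level and compactness holds up to translations (cf. \cite{dpv2013,secchi2013}).

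\emph{Step 2 (proving $p=p_\infty$).} For ``$\ge$'': given $u\in\mathcal{P}$, project it onto $\mathcal{P}_\infty$, say $u_\tau\in\mathcal{P}_\infty$, and estimate $p_\infty\le\mathcal{I}_\infty(u_\tau)\le\max_{t>0}\mathcal{I}_\infty(u_t)\le\max_{t>0}\mathcal{I}(u_t)=\mathcal{I}(u)$, hence $p\ge p_\infty$. For ``$\le$'': translate the ground state, $w_n:=w(\cdot-ne_1)$, set $t_n:=t(w_n)$ so that $(w_n)_{t_n}\in\mathcal{P}$, and show $\mathcal{I}((w_n)_{t_n})\to p_\infty$. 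Here I would use $V(x)\to V_\infty$ and $\langle\nabla V(x),x\rangle\to 0$ as $|x|\to\infty$ — the latter by squeezing $N(V_\infty-V(x))\le\langle\nabla V(x),x\rangle\le 0$ from $(V_2)$–$(V_3)$ — together with dominated convergence: this gives $P(w_n)\to P_\infty(w)=0$ with $P(w_n)\ge 0$ by $(V_3)$, which forces $t_n\ge 1$; inserting this into $P((w_n)_{t_n})=0$ and using $N>2s$ forces $t_n\to 1$, and then $\mathcal{I}((w_n)_{t_n})\to\mathcal{I}_\infty(w)=p_\infty$. Since each $(w_n)_{t_n}\in\mathcal{P}$, this yields $p\le p_\infty$, hence $p=p_\infty$.

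\emph{Step 3 (the contradiction).} Suppose $p$ were a critical value: $\mathcal{I}'(u_0)=0$, $\mathcal{I}(u_0)=p$, $u_0\neq 0$. Solutions of (FSE) satisfy the Pohozaev identity, so $u_0\in\mathcal{P}$, and by the strong maximum principle for $(-\Delta)^s$ I may take $u_0>0$ a.e. Pick $\tau>0$ with $(u_0)_\tau\in\mathcal{P}_\infty$; since $(u_0)_\tau>0$ a.e. and $V>V_\infty$ on a set of positive measure by $(V_2)$, we get the strict inequality $\mathcal{I}((u_0)_\tau)>\mathcal{I}_\infty((u_0)_\tau)\ge p_\infty$. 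On the other hand $u_0\in\mathcal{P}$ gives $\mathcal{I}((u_0)_\tau)\le\max_{t>0}\mathcal{I}((u_0)_t)=\mathcal{I}(u_0)=p=p_\infty$, a contradiction. Hence $p$ is not a critical value; and since a minimizer of $\mathcal{I}$ on $\mathcal{P}$ would be a free critical point at level $p$ ($\mathcal{P}$ being a natural constraint, which I would also record), the infimum $p$ is not achieved. The step I expect to be the main obstacle is Step 2, and within it the upper bound $p\le p_\infty$: one must choose the dilation/translation test family correctly and, above all, prove that the projection parameters $t_n$ return to $1$; this rests on the fibering lemma whose uniqueness part is exactly where $(V_4)$ is needed, while the rest of the argument is a rather direct exploitation of the strict monotonicity built into $(V_2)$.
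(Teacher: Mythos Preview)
Your proposal is correct and follows essentially the same route as the paper: identify the limit level $p_\infty$ attained by a ground state $w$ of (FSE)$_\infty$, prove $p=p_\infty$ by projecting elements of $\mathcal{P}$ onto $\mathcal{P}_\infty$ (and vice versa via translated ground states $w(\cdot-z_n)$ with $t_n\to 1$), and then derive a contradiction from a strict inequality. The only cosmetic difference is in Step~3: the paper obtains the strict inequality from $t_{u_0}<1$ and the explicit formula $\mathcal{I}_\infty\big(u_0(\cdot/t_{u_0})\big)=\tfrac{s}{N}t_{u_0}^{N-2s}\int|\xi|^{2s}|\hat u_0|^2$, whereas you use $V>V_\infty$ on a set of positive measure together with $u_0>0$ via the strong maximum principle---both lead to the same contradiction, and the paper's version avoids invoking positivity of $u_0$.
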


\begin{theorem}\label{th1-2}
Let $(V_1)-(V_4)$ and $(f_1)-(f_2)$ hold. Furthermore we assume
\begin{itemize}
\item[$(V_{5})$] $\frac{(N-2s)\|w\|^{2}_{H^{s}(\mathbb{R}^{N})}}{A\|w\|_{L^{2}(\mathbb{R}^{N})}^{2}}\leq 2^{\frac{2s}{N}}$,
 where $A:=\max\limits_{x\in \mathbb{R}^{N}}\langle \nabla V(x),x\rangle$.
\end{itemize}
Then problem \em{(FSE)} has at least one positive solution in $\HR$.
\end{theorem}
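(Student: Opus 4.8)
The plan is to realize the solution as a critical point of $\mathcal{I}$ at a min-max level $c$ trapped strictly between $p_{\infty}$ and $2p_{\infty}$, where $\mathcal{I}_{\infty}$ is the functional of the limit problem $(\mathrm{FSE})_{\infty}$ and $p_{\infty}$ its ground-state (least) energy. Theorem~\ref{th1-1} forces us above the ground-state level: the Pohozaev value $p$, which one checks equals $p_{\infty}$ by means of $(V_{3})$, is not a critical value, and the whole role of the extra hypothesis $(V_{5})$ will be to keep $c$ below $2p_{\infty}$. The first step is the reduction to nonnegative functions: extend $f$ by $f(t)=0$ for $t\le 0$ (which is continuous since $(f_{1})$ forces $f(0^{+})=0$); testing the equation with the negative part $u_{-}$ and using that the fractional bilinear form evaluated on $u_{+}$ and $u_{-}$ is $\le 0$, every critical point of the extended functional is $\ge 0$, hence $>0$ by the strong maximum principle for $(-\Delta)^{s}$ and standard fractional regularity theory. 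The same inequality gives $\mathcal{I}(u)\ge\mathcal{I}(u_{+})+\tfrac12\|u_{-}\|^{2}$, so the negative part of any Cerami sequence tends to zero and in the concentration analysis below all ``bubbles'' are nonnegative, hence positive, solutions of $(\mathrm{FSE})_{\infty}$.

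Next I would record the mountain-pass geometry of $\mathcal{I}$ ($0$ a strict local minimum by $(f_{1})$ and $(V_{1})$; $\mathcal{I}$ negative far out since $f(t)/t\to 1$), observe that translating a nearly optimal path of $\mathcal{I}_{\infty}$ out to infinity (where $V\approx V_{\infty}$) produces mountain-pass value exactly $p_{\infty}$, and --- this level being noncritical --- replace it by a higher min-max of linking / Benci--Cerami type, modelled on translates and dilations of a fixed positive ground state $w$ of $(\mathrm{FSE})_{\infty}$ together with a barycenter obstruction, with value $c$. The lower bound $c>p_{\infty}$ should come from the linking structure, the comparison family carried off to infinity detecting only the level $p_{\infty}$. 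The upper bound $c<2p_{\infty}$ is the heart of the matter: evaluating $\mathcal{I}$ on an explicit test configuration --- a dilation of $w$ of $L^{2}$-volume factor $2$, reprojected onto the Pohozaev manifold $\mathcal{P}$ --- and estimating the excess produced by $V-V_{\infty}\ge 0$ and by the term $\langle\nabla V,x\rangle$ through $(V_{2})$--$(V_{4})$ and the constant $A$, one sees that $\dfrac{(N-2s)\|w\|_{H^{s}(\mathbb{R}^{N})}^{2}}{A\|w\|_{L^{2}(\mathbb{R}^{N})}^{2}}\le 2^{2s/N}$ is precisely what forces the total below one doubled ground-state level.

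I then extract a Cerami sequence $(u_{n})$ at level $c$. Its boundedness is not an Ambrosetti--Rabinowitz matter: I would combine the approximate Nehari identity $\mathcal{I}(u_{n})-\tfrac12\mathcal{I}'(u_{n})[u_{n}]=\int Q(u_{n})\to c$ with $(f_{2})$ (so $Q\ge 0$, $Q(t)\to+\infty$, $Q(r)\le LQ(t)$ for $r\le t$) and with the Pohozaev identity for $\mathcal{I}$ (its $\langle\nabla V,x\rangle$-contribution controlled by $(V_{4})$), ruling out $\|u_{n}\|\to+\infty$ via the non-vanishing dichotomy --- the fractional analogue of Jeanjean's lemma. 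Feeding $(u_{n})$ into the representation theorem for Palais--Smale / Cerami sequences stated in the introduction, up to a subsequence one has $u_{n}=u_{0}+\sum_{j=1}^{k}w_{j}(\cdot-y_{n}^{j})+o(1)$ in $\HR$, with $|y_{n}^{j}|\to+\infty$, $u_{0}$ a (possibly zero) critical point of $\mathcal{I}$, each $w_{j}\ne 0$ a critical point of $\mathcal{I}_{\infty}$, and $c=\mathcal{I}(u_{0})+\sum_{j=1}^{k}\mathcal{I}_{\infty}(w_{j})$; since the $w_{j}$ are nonnegative and $(\mathrm{FSE})_{\infty}$ has no nonnegative nontrivial solution of energy in $(p_{\infty},2p_{\infty})$, for each $j$ either $\mathcal{I}_{\infty}(w_{j})=p_{\infty}$ or $\mathcal{I}_{\infty}(w_{j})\ge 2p_{\infty}$.

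The argument then closes by excluding all bubbles. If $k\ge 2$, or if some $\mathcal{I}_{\infty}(w_{j})\ge 2p_{\infty}$, then $c\ge 2p_{\infty}$, against the upper bound; if $k=1$ with $\mathcal{I}_{\infty}(w_{1})=p_{\infty}$ and $u_{0}=0$, then $c=p_{\infty}$, against the lower bound; if $k=1$ and $u_{0}\ne 0$, then $u_{0}$ lies on $\mathcal{P}$, so $\mathcal{I}(u_{0})\ge p=p_{\infty}$ and again $c\ge 2p_{\infty}$. Hence $k=0$: $u_{n}\to u_{0}$ in $\HR$, $u_{0}$ is a critical point of $\mathcal{I}$ with $\mathcal{I}(u_{0})=c>0$, so $u_{0}\not\equiv 0$, and by the first step $u_{0}>0$ solves $(\mathrm{FSE})$. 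The step I expect to be the genuine obstacle is the construction of this intermediate min-max together with the test-function estimate that extracts $c<2p_{\infty}$ from $(V_{5})$ --- showing that the unavoidable positive cost of the non-autonomous potential, measured against the ground state $w$, stays under a full ground-state level of the problem at infinity; the remainder is the (fractional) concentration-compactness and Pohozaev machinery run with care.
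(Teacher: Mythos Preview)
Your plan---a barycenter-based linking at a level $d\in(m_{\infty},2m_{\infty})$, then the splitting lemma to rule out bubbles---is the paper's strategy. The execution differs in two respects worth recording. First, the paper runs the whole argument \emph{on} the Pohozaev manifold $\mathcal{P}$: the linking sets are $S=\{u\in\mathcal{P}:\beta(u)=0\}$ and $Q=\Gamma(\overline{B}_{\bar\rho}(0))$, where $\Gamma[z](x)=w\bigl((x-z)/t_{z}\bigr)$ and $t_{z}>1$ is the unique scale projecting the translate $w(\cdot-z)$ onto $\mathcal{P}$; since the (PS) sequence already lies on $\mathcal{P}$, its boundedness is read off directly from the constraint identity $\mathcal{I}(u)=\tfrac{s}{N}\int|\xi|^{2s}|\hat u|^{2}\,d\xi-\tfrac{1}{2N}\int\langle\nabla V,x\rangle|u|^{2}\,dx$ together with $(V_{2})$, so the Cerami/Jeanjean machinery you propose via $(f_{2})$ is never invoked. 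Second, the upper bound $d<2m_{\infty}$ comes not from one test configuration but from the uniform family estimate $\mathcal{I}(\Gamma[z])<t_{z}^{N}\,m_{\infty}+o(1)$; comparing $\Gamma[z]\in\mathcal{P}$ with $w\in\mathcal{P}_{\infty}$ yields $t_{z}^{2s}<\dfrac{(N-2s)\|w\|_{H^{s}}^{2}}{A\|w\|_{L^{2}}^{2}}$, and $(V_{5})$ is precisely the statement that this is $\le 2^{2s/N}$, i.e.\ $t_{z}^{N}<2$---which is the ``$L^{2}$-volume factor $2$'' you anticipated. Your unconstrained route is plausible, but the paper's constrained setup is shorter and makes the appearance of $(V_{5})$ a one-line computation rather than a separate test-function analysis.
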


The purpose of this paper is to present a different and more general approach in the search for
fractional Schr\"{o}dinger equation with asymptotically linear nonlinearity.  Furthermore, the
condition $(f_2)$ is more general than the usual assumption that $\frac{f(t)}{t}$
is an increasing function of $t>0$. Then not all $u\in H^{s}(\mathbb{R}^{N})\setminus\{0\}$
can be projected on the Nehari manifold and so this approach fails.
Our interest in this paper is to search for the positive solution of Eq. (FSE) on the Pohozaev manifold $\mathcal{P}$.
Our ideas were inspired in the recent work of Secchi \cite{s2012}
which related the minimization of the functional $\mathcal{I}$ on the Pohozaev manifold $\mathcal{P}$.
The Pohozaev manifold constraint has been used for the first time in the finding solutions to
the elliptic problem  by Shatah \cite{sh1985}.  We refer the readers who are interested in the detail to the papers
\cite{lk2004,aa2009,lk2002,lla2014} and the reference therein.

The organization of this paper is as follows. In Section 2, we introduce a variational setting of
the problem and present some preliminary results.
In Section 3, some properties of the Pohozaev manifold are discussed.
In Section 4, we investigate the behavior of the Palais-Smale sequence for the functional $\mathcal{I}$.
Section 5 and 6 are devoted to the proof of Theorems \ref{th1-1} and \ref{th1-2}.

\section{Preliminaries}
In this section, we collect our basic assumptions and recall some known results for future reference.
In this paper, $\mathcal{S}$ denotes the Schwartz space of rapidly decreasing $C^{\infty}$ functions in $\mathbb{R}^{N}$
and $\mathcal{F}$ is the Fourier transform, i.e.,
$$\mathcal{F}[u](\xi)=\frac{1}{(2\pi)^{\frac{N}{2}}}\int\limits_{\mathbb{R}^{N}}e^{-2\pi i\,\xi\cdot x}u(x)dx.$$

For any $s\in (0\,,\,1)$, the fractional Sobolev space $H^{s}(\mathbb{R}^{N})$ is defined by
$$H^{s}(\mathbb{R}^{N})=\{u\in L^{2}(\mathbb{R}^{N}):\,\frac{|u(x)-u(y)|^{2}}{|x-y|^{N+2s}}\in L^{1}(\mathbb{R}^{N}\times\mathbb{R}^{N})\},$$
endowed with the natural morm
$$\|u\|_{H^{s}(\mathbb{R}^{N})}=\Big(
\int\limits_{\mathbb{R}^{2N}}\frac{|u(x)-u(y)|^{2}}{|x-y|^{N+2s}}dxdy+\int\limits_{\mathbb{R}^{N}}|u|^{2}dx\Big)^{\frac{1}{2}}, $$
where the norm
$$[u]_{H^{s}(\mathbb{R}^{N})}=\Big(\int\limits_{\mathbb{R}^{2N}}\frac{|u(x)-u(y)|^{2}}{|x-y|^{N+2s}}dxdy\Big)^{\frac{1}{2}}$$
is the so called Gagliardo semi-norm of $u$.

Indeed, according to \cite{ege2012}, the fractional Laplacian $(-\Delta)^{s}$ can be viewed as a pseudo-differential operator of  symbol
$|\xi|^{2s}$, as stated in the following.
\begin{lemma}\label{le2-1}
Let $(-\Delta)^{s}:\mathcal{S}\to L^{2}(\mathbb{R}^{N})$ be the fractional Laplacian operator defined by
$$(-\Delta)^{s}u(x)=C_{N,s}\,\text{\em {P.V.}}\int\limits_{\mathbb{R}^{N}}\frac{u(x)-u(y)}{|x-y|^{N+2s}}dy,$$
where \text{\em {P.V.}} is the principle value and $C_{N,s}>0$ is a normalization constant.
 Then for any $u\in \mathcal{S}$,
$$(-\Delta)^{s}u(x)=\mathcal{F}^{-1}[|\xi|^{2s}(\mathcal{F}u)](x),\quad\forall  \xi\in \mathbb{R}^{N}.$$
\end{lemma}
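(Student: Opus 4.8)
The plan is the classical one: rewrite the singular integral on the right‑hand side as a symmetric second‑difference integral, which is absolutely convergent and hence eliminates the principal value; then take the Fourier transform and exploit its translation covariance to reduce the asserted identity to the evaluation of a single scalar integral; this last computation, via scaling and rotation invariance, produces the factor $|\xi|^{2s}$ and simultaneously fixes the normalization constant $C_{N,s}$.

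Concretely, for $u\in\mathcal{S}$ I would first substitute $y=x+z$ and then average the resulting integrand with its reflection under $z\mapsto -z$; the odd part in $z$ cancels and one gets
$$(-\Delta)^{s}u(x) = -\frac{C_{N,s}}{2}\int_{\mathbb{R}^{N}}\frac{u(x+z)+u(x-z)-2u(x)}{|z|^{N+2s}}\,dz .$$
Since $u\in\mathcal{S}$, a second‑order Taylor expansion gives $|u(x+z)+u(x-z)-2u(x)|\le C\min\{|z|^{2},1\}$ uniformly in $x$, and $\min\{|z|^{2},1\}\,|z|^{-N-2s}\in L^{1}(\mathbb{R}^{N})$ precisely because $0<s<1$; hence the integral converges absolutely and the principal value in the statement exists. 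Applying $\mathcal{F}$ and using $\mathcal{F}[u(\cdot\pm z)](\xi)=e^{\pm 2\pi i\,\xi\cdot z}\mathcal{F}u(\xi)$ — the interchange of $\mathcal{F}$ with the $z$‑integral being justified by the same estimate through Fubini's theorem — one obtains
$$\mathcal{F}\big[(-\Delta)^{s}u\big](\xi) = \Big(C_{N,s}\int_{\mathbb{R}^{N}}\frac{1-\cos(2\pi\,\xi\cdot z)}{|z|^{N+2s}}\,dz\Big)\,\mathcal{F}u(\xi).$$

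It then remains to evaluate $J(\xi):=\int_{\mathbb{R}^{N}}\frac{1-\cos(2\pi\,\xi\cdot z)}{|z|^{N+2s}}\,dz$. The truncation bound $1-\cos t\le\min\{t^{2}/2,2\}$ shows $0\le J(\xi)<\infty$ for every $\xi$. For $\xi\neq0$, substituting $z\mapsto z/|\xi|$ and then rotating the variable so that $\xi/|\xi|$ becomes the first coordinate vector $e_{1}$ gives $J(\xi)=|\xi|^{2s}J(e_{1})$, with $J(e_{1})=\int_{\mathbb{R}^{N}}\frac{1-\cos(2\pi z_{1})}{|z|^{N+2s}}\,dz\in(0,\infty)$. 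Taking $C_{N,s}:=J(e_{1})^{-1}$ we conclude $\mathcal{F}[(-\Delta)^{s}u](\xi)=|\xi|^{2s}\,\mathcal{F}u(\xi)$, that is $(-\Delta)^{s}u=\mathcal{F}^{-1}[|\xi|^{2s}\mathcal{F}u]$; both sides belong to $L^{2}(\mathbb{R}^{N})$ for $u\in\mathcal{S}$, and the identity holds pointwise by continuity.

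The only genuinely delicate point is the interchange of the Fourier transform with the symmetrized integral in $z$, which I would justify by Fubini–Tonelli: integrating $|u(x+z)+u(x-z)-2u(x)|$ in $x$ first and using that $u$ and its second derivatives are Schwartz bounds the inner integral by $C\min\{|z|^{2},1\}$, and $\min\{|z|^{2},1\}\,|z|^{-N-2s}\in L^{1}(\mathbb{R}^{N})$ exactly because $0<s<1$. The remaining ingredients — the change of variables, the invariance of $|z|$ and of Lebesgue measure under rotations, and the finiteness and positivity of $J(e_{1})$ — are routine.
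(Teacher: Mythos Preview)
Your argument is correct and is exactly the classical one: symmetrize to a second-difference quotient, take the Fourier transform via Fubini, and reduce to the scaling/rotation computation of $\int_{\mathbb{R}^{N}}\frac{1-\cos(\xi\cdot z)}{|z|^{N+2s}}\,dz$, which fixes $C_{N,s}$. The paper does not actually prove Lemma~\ref{le2-1} but merely quotes it from \cite{ege2012}, where precisely this computation is carried out; so your proposal coincides with the referenced proof.
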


Now we can see that an alternative definition of the fractional Sobolev space $H^{s}(\mathbb{R}^{N})$
via the Fourier transform is the following
$$H^{s}(\mathbb{R}^{N})=\{u\in L^{2}(\mathbb{R}^{N}):\,\int\limits_{\mathbb{R}^{N}}
(1+|\xi|^{2s})|\hat{u}|^{2}d\xi<+\infty\},$$
and the norm is defined by
$$\|u\|_{H^{s}(\mathbb{R}^{N})}=\Big(\int\limits_{\mathbb{R}^{N}}(|\xi|^{2s}|\hat{u}|^{2}+|\hat{u}|^{2})d\xi\Big)^{\frac{1}{2}},$$
where $\hat{u}:=\mathcal{F}[u]$ denotes the Fourier transform of $u$.

It follows from the equality
$$2 C^{-1}_{N,s}\int\limits_{\mathbb{R}^{N}}|\xi|^{2s}|\hat{u}|^{2}d\xi
=2 C_{N,s}^{-1}\|(-\Delta)^{\frac{s}{2}}u\|^{2}_{L^{2}(\mathbb{R}^{N})}
=[u]^{2}_{H^{s}(\mathbb{R}^{N})}
$$
that the norms on $H^{s}(\mathbb{R}^{N})$
\begin{equation*}\aligned
&u\mapsto \Big(\int\limits_{\mathbb{R}^{2N}}\frac{|u(x)-u(y)|^{2}}{|x-y|^{N+2s}}dxdy+\int\limits_{\mathbb{R}^{N}}|u|^{2}dx\Big)^{\frac{1}{2}},\\
&u\mapsto \Big(\int\limits_{\mathbb{R}^{N}}(|(-\Delta)^{\frac{s}{2}}u|^{2}+|u|^{2})dx \Big)^{\frac{1}{2}},\\
&u\mapsto \Big(\int\limits_{\mathbb{R}^{N}}|\xi|^{2s}|\hat{u}|^{2}d\xi+\int\limits_{\mathbb{R}^{N}}|u|^{2}dx \Big)^{\frac{1}{2}},\\
\endaligned
\end{equation*}
are all equivalent, and then the problem will be considered under those norms.

For the reader's convenience, from \cite{ege2012,s2012,secchi2013},
we review the main embedding results for the fractional Sobolev space.
\begin{lemma}\label{le2-2}
Let $s\in (0\,,\,1)$ and $N>2s$. Then there exists a constant $C>0$ such that
$$\|u\|_{L^{2^{*}_{s}}(\mathbb{R}^{N})}\leq C\,\|u\|_{H^{s}(\mathbb{R}^{N})},$$
for every $u\in H^{s}(\mathbb{R}^{N})$, where $2^{*}_{s}:=\frac{2N}{N-2s}$ is the fractional critical exponent.
Moreover, the embedding $H^{s}(\mathbb{R}^{N})\hookrightarrow L^{p}(\mathbb{R}^{N})$
is continuous for any $p\in [2\,,\,2^{*}_{s}]$ and locally compact whenever $p\in [2\,,\,2^{*}_{s})$.
\end{lemma}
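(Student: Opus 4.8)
The plan is to establish the three assertions in turn: first the critical Sobolev inequality with $p=2^{*}_{s}$, then the full range of continuous embeddings by interpolation, and finally the local compactness away from the critical exponent via the Fr\'echet--Kolmogorov compactness criterion. Throughout I would work on the Fourier side, exploiting the identification in Lemma~\ref{le2-1} and the norm equivalences recorded just above, reducing everything to properties of the multiplier $|\xi|^{2s}$.

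For the critical inequality, set $g:=(-\Delta)^{s/2}u$, so that $\widehat{g}(\xi)=|\xi|^{s}\widehat{u}(\xi)$ and $\|g\|_{L^{2}(\R)}=\|(-\Delta)^{s/2}u\|_{L^{2}(\R)}$. Then $u=(-\Delta)^{-s/2}g=I_{s}*g$, where $I_{s}$ is the Riesz kernel $I_{s}(x)=c\,|x|^{-(N-s)}$ whose Fourier symbol is $|\xi|^{-s}$. The Hardy--Littlewood--Sobolev inequality gives
$$\|u\|_{L^{2^{*}_{s}}(\R)}=\|I_{s}*g\|_{L^{2^{*}_{s}}(\R)}\leq C\,\|g\|_{L^{2}(\R)}=C\,\|(-\Delta)^{s/2}u\|_{L^{2}(\R)},$$
where the exponent relation $\tfrac{1}{2^{*}_{s}}=\tfrac12-\tfrac{s}{N}$ is exactly the admissibility condition for HLS with source exponent $2$. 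Since $\|(-\Delta)^{s/2}u\|_{L^{2}(\R)}\leq\|u\|_{\HR}$, this proves the stated inequality (in fact with only the homogeneous seminorm on the right). The representation $u=I_{s}*g$ is first justified for $u\in\mathcal{S}$ and then extended to all of $\HR$ by density.

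Having both $u\in L^{2}(\R)$ (by the definition of $\HR$) and $u\in L^{2^{*}_{s}}(\R)$, the continuous embedding for intermediate $p\in[2,2^{*}_{s}]$ follows by H\"older interpolation: writing $\tfrac1p=\tfrac{\theta}{2}+\tfrac{1-\theta}{2^{*}_{s}}$ with $\theta\in[0,1]$, one gets $\|u\|_{L^{p}(\R)}\leq\|u\|_{L^{2}(\R)}^{\theta}\|u\|_{L^{2^{*}_{s}}(\R)}^{1-\theta}\leq C\|u\|_{\HR}$. For the local compactness I would fix a bounded domain $\Omega$ and a sequence $\{u_{n}\}$ bounded in $\HR$, and verify the hypotheses of the Fr\'echet--Kolmogorov theorem for $\{u_{n}|_{\Omega}\}$ in $L^{p}(\Omega)$. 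Boundedness in $L^{p}(\Omega)$ is immediate from the continuous embedding. The decisive point is the uniform equicontinuity of translations: using the elementary bound $|e^{\,i h\cdot\xi}-1|^{2}\leq C\,|h|^{2s}|\xi|^{2s}$, valid for all $h,\xi$, I obtain
$$\|u_{n}(\cdot+h)-u_{n}\|_{L^{2}(\R)}^{2}\leq C\,|h|^{2s}\int_{\R}|\xi|^{2s}|\widehat{u_{n}}|^{2}\,d\xi\leq C\,|h|^{2s}\|u_{n}\|_{\HR}^{2}.$$
Interpolating this $L^{2}$ smallness against the uniform $L^{2^{*}_{s}}$ bound of the translates yields $\|u_{n}(\cdot+h)-u_{n}\|_{L^{p}(\Omega)}\to0$ as $h\to0$, uniformly in $n$, for every $p\in[2,2^{*}_{s})$. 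Since $\Omega$ is bounded, no tightness-at-infinity condition is required, and Fr\'echet--Kolmogorov produces a subsequence converging in $L^{p}(\Omega)$.

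The main obstacle is the endpoint $p=2^{*}_{s}$. The bound $|e^{\,ih\cdot\xi}-1|^{2}\leq C|h|^{2s}|\xi|^{2s}$ transfers the fractional seminorm into an $L^{2}$ modulus of continuity of translation, but the interpolation toward $L^{2^{*}_{s}}$ degenerates as $p\uparrow 2^{*}_{s}$, since the exponent $\theta$ on the small $L^{2}$ factor tends to $0$; compactness genuinely fails at the critical exponent because of the scale invariance of $[\,\cdot\,]_{H^{s}(\mathbb{R}^{N})}$ and the possibility of concentrating or escaping bubbles. Keeping the translation argument strictly below $2^{*}_{s}$, and justifying the Riesz-potential representation for general $u\in\HR$ by density from $\mathcal{S}$, are the two places where care is needed; the remaining steps are routine.
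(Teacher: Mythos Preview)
Your argument is correct, but note that the paper does not supply its own proof of this lemma: it is stated as a known fact, with the sentence ``For the reader's convenience, from \cite{ege2012,s2012,secchi2013}, we review the main embedding results for the fractional Sobolev space'' immediately preceding it. There is therefore no proof in the paper to compare against.

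That said, your route is exactly the standard one underlying the cited references. The critical inequality via the Riesz-potential representation $u=I_{s}\ast g$ with $g=(-\Delta)^{s/2}u$ and Hardy--Littlewood--Sobolev is the usual derivation (and indeed yields the stronger homogeneous estimate). The interpolation step is routine. For local compactness, your Fourier bound $|e^{ih\cdot\xi}-1|^{2}\le 4\,|h|^{2s}|\xi|^{2s}$ (valid since $\min(4,t^{2})\le 4t^{2s}$ for all $t\ge0$ when $0<s<1$) gives uniform $L^{2}$-equicontinuity of translations, and interpolating against the uniform $L^{2^{*}_{s}}$ bound produces the Fr\'echet--Kolmogorov hypothesis for each $p\in[2,2^{*}_{s})$; since the $u_{n}$ are globally defined, no issue arises when restricting to a bounded $\Omega$. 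Your remark that the interpolation exponent degenerates as $p\uparrow 2^{*}_{s}$, reflecting the genuine failure of compactness at the critical exponent, is also on point. Nothing needs to be added.
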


\begin{lemma}\label{le2-3}
Assume that  the sequence $\{u_{n}\}_{n}$ is bounded in $H^{s}(\mathbb{R}^{N})$ and it satisfies
$$\lim\limits_{n\to +\infty}\sup\limits_{y\in \mathbb{R}^{N}}\int\limits_{B_{R}(y)}|u_{n}(x)|^{2}dx=0$$
for $R>0$. Then $u_{n}\to 0$ in $L^{p}(\mathbb{R}^{N})$ as $n\to\infty$ for every $2<p<2^*_{s}$.
\end{lemma}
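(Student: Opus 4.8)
The plan is to adapt P.-L. Lions' classical vanishing lemma to the fractional setting, combining a bounded-overlap covering of $\mathbb{R}^N$ with the local fractional Sobolev embedding and an interpolation bootstrap. First I would fix a covering of $\mathbb{R}^N$ by balls $\{B_R(y_i)\}_{i}$ of the given radius $R$ such that every point belongs to at most $M=M(N)$ of them; such a cover exists with $M$ depending only on the dimension. Writing $\sigma_n:=\sup_{y\in\mathbb{R}^N}\int_{B_R(y)}|u_n|^2\,dx$, the hypothesis is exactly $\sigma_n\to 0$, while boundedness in $H^s(\mathbb{R}^N)$ together with Lemma \ref{le2-2} gives $\sup_n\|u_n\|_{H^s(\mathbb{R}^N)}<\infty$ and $\sup_n\|u_n\|_{L^{2^*_s}(\mathbb{R}^N)}<\infty$.

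Second, I would prove vanishing at the exponents for which the summation closes. Fix $p\in(2,2^*_s)$ and let $\theta\in(0,1)$ be given by $\frac1p=\frac{1-\theta}{2}+\frac{\theta}{2^*_s}$. On each ball, interpolation between $L^2(B_R(y_i))$ and $L^{2^*_s}(B_R(y_i))$ followed by the local Sobolev inequality $\|u\|_{L^{2^*_s}(B_R(y_i))}\le C\|u\|_{H^s(B_R(y_i))}$ yields
$$\int_{B_R(y_i)}|u_n|^p\,dx\le C\,\Big(\int_{B_R(y_i)}|u_n|^2\,dx\Big)^{\frac{(1-\theta)p}{2}}\,\|u_n\|_{H^s(B_R(y_i))}^{\theta p}.$$
Summing over $i$ and bounding each $L^2$ factor by its supremum $\sigma_n$ gives $\|u_n\|_{L^p(\mathbb{R}^N)}^p\le C\,\sigma_n^{(1-\theta)p/2}\sum_i\|u_n\|_{H^s(B_R(y_i))}^{\theta p}$. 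Provided $\theta p\ge 2$, the embedding $\|a\|_{\ell^{\theta p}}\le\|a\|_{\ell^2}$ dominates this sum by $\big(\sum_i\|u_n\|_{H^s(B_R(y_i))}^2\big)^{\theta p/2}$, and the bounded overlap of the cover controls the last sum by $M^{\theta p/2}\|u_n\|_{H^s(\mathbb{R}^N)}^{\theta p}$, which is uniformly bounded. Hence $\|u_n\|_{L^p(\mathbb{R}^N)}^p\le C\,\sigma_n^{(1-\theta)p/2}\to 0$.

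Third, I would remove the restriction $\theta p\ge 2$. A direct computation gives $\theta p=\frac{(p-2)\,2^*_s}{2^*_s-2}$, which increases from $0$ to $2^*_s$ as $p$ runs over $(2,2^*_s)$; hence $\theta p\ge 2$ precisely for $p\in[\bar p,2^*_s)$ with $\bar p:=4-\frac{4}{2^*_s}$, and the previous step already gives $u_n\to 0$ in $L^{\bar p}(\mathbb{R}^N)$. For the remaining exponents $p\in(2,\bar p)$ I would interpolate $\|u_n\|_{L^p}\le\|u_n\|_{L^2}^{1-\mu}\|u_n\|_{L^{\bar p}}^{\mu}$ with $\mu\in(0,1)$, using the uniform $L^2$-bound together with $\|u_n\|_{L^{\bar p}}\to 0$; for $p\in(\bar p,2^*_s)$ I would interpolate between $L^{\bar p}$ and $L^{2^*_s}$, using the uniform $L^{2^*_s}$-bound. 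Either way the conclusion follows for every $p\in(2,2^*_s)$.

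The point requiring the most care is the summation step: one must verify the local fractional Sobolev inequality on each ball with the Gagliardo seminorm taken over $B_R(y_i)\times B_R(y_i)$, and control $\sum_i\|u_n\|_{H^s(B_R(y_i))}^2$ by the global norm, for which the bounded overlap of the cover guarantees that each admissible pair $(x,y)$ is counted at most $M$ times. The accompanying subtlety, easy to overlook, is that the natural estimate closes only when $\theta p\ge 2$, so the full range $2<p<2^*_s$ is reached not by the summation argument alone but only after the interpolation bootstrap of the last step.
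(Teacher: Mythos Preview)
The paper does not prove this lemma at all: it is stated as a known result, with the surrounding sentence ``For the reader's convenience, from \cite{ege2012,s2012,secchi2013}, we review the main embedding results for the fractional Sobolev space'' indicating that Lemmas~\ref{le2-2} and~\ref{le2-3} are simply quoted from the literature. So there is no in-paper argument to compare against.

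Your proof is the standard Lions vanishing argument transported to $H^s(\mathbb{R}^N)$, and it is correct. The bounded-overlap cover, the local interpolation/Sobolev estimate on each ball, and the $\ell^{\theta p}\hookrightarrow\ell^2$ step when $\theta p\ge 2$ are exactly how the result is proved in the references (see e.g.\ Secchi). Two minor remarks: (i) in your third step the range $p\in(\bar p,2^*_s)$ is already covered by the second step, so only the interpolation for $p\in(2,\bar p)$ is actually needed; (ii) the point you flag about controlling $\sum_i\|u_n\|_{H^s(B_R(y_i))}^2$ by the global norm is fine, since for a pair $(x,y)\in B_R(y_i)\times B_R(y_i)$ the index $i$ ranges over at most $M$ values (any ball containing both $x$ and $y$ must in particular contain $x$), and the uniformity of the local Sobolev constant follows from translation invariance.
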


Let
$$
E=\Big\{u\in H^{s}(\mathbb{R}^N): \int\limits_{\mathbb{R}^N}V(x)|u|^2dx<+\infty\Big\}
$$
be the Sobolev space endowed with the norm
$$
\|u\|_E=\Big(\int\limits_{\mathbb{R}^N}|\xi|^{2s}|\hat{u}|^{2}d\xi +\int\limits_{\mathbb{R}^{N}}V(x)|u|^{2}dx\Big)^{\frac{1}{2}}.
$$
It follows from $(V_1)$ that the norm $\|\cdot\|_{E}$ is equivalent to the norms $\|\cdot\|_{H^{s}(\mathbb{R}^{N})}$.

\begin{definition}\label{def2-1}
We say $u\in E$ is a weak solution of Eq. \em {(FSE)} if
$$\int\limits_{\mathbb{R}^{N}}|\xi|^{2s}\hat{u}(\xi)\hat{v}(\xi)d\xi+\int\limits_{\mathbb{R}^{N}}V(x)uvdx
=\int\limits_{\mathbb{R}^{N}}f(u)vdx$$
for all $v\in E$.
\end{definition}
It is easy to see that the weak solutions to Eq. (FSE) are the critical points of the functional
$\mathcal{I}:\,E\to \mathbb{R}$ defined by
$$\mathcal{I}(u)=\frac{1}{2}\int\limits_{\mathbb{R}^{N}}|\xi|^{2s}|\hat{u}(\xi)|^{2}d\xi
+\frac{1}{2}\int\limits_{\mathbb{R}^{N}}V(x)|u|^{2}dx
-\int\limits_{\mathbb{R}^{N}}F(u)dx,$$
where $F(u)=\int^{u}_{0}f(t)dt$. From \eqref{eq1-2},
 the functional $\mathcal{I}$ is well-defined and belongs to $C^{1}(E\,,\,\mathbb{R})$.

Since we are looking for positive solutions, we take as usual $f(u)$
defined on all $u\in \mathbb{R}$, making $f(u)=0$ if $u\leq 0$.
Thus, if $u$ is a critical point of $\mathcal{I}$, we obtain
\begin{equation*}\label{eq1-7}
\aligned
0
&=\langle\mathcal{I}'(u)\,,\,u^{-}\rangle\\
&=\int\limits_{\mathbb{R}^{2N}}\frac{(u(x)-u(y))(u^{-}(x)-u^{-}(y))}{|x-y|^{N+2s}}dxdy
+\int\limits_{\mathbb{R}^{N}}V(x)u\,u^{-}dx-\int\limits_{\mathbb{R}^{N}}f(u)u^{-}dx\\
&\geq \int\limits_{\mathbb{R}^{2N}}\frac{|u^{-}(x)-u^{-}(y)|^{2}}{|x-y|^{N+2s}}dxdy
+\int\limits_{\mathbb{R}^{N}}V(x)u\,u^{-}dx-\int\limits_{\mathbb{R}^{N}}f(u)u^{-}dx\\
\endaligned
\end{equation*}
where $u^{-}=\min\{u\,,\,0\}$. This implies that
$$\|u^{-}\|^{2}_{E}=\int\limits_{\mathbb{R}^{2N}}\frac{|u^{-}(x)-u^{-}(y)|^{2}}{|x-y|^{N+2s}}dxdy=0.$$
Thus, necessarily we have $u\geq 0$.

Here we use the following inequality
\begin{equation}\label{eq1-8}
(u(x)-u(y))(u^{-}(x)-u^{-}(y))\geq |u^{-}(x)-u^{-}(y)|^{2}\quad \text{for any}\,\,\,x,y\in\mathbb{R}^{N},
\end{equation}
where $u^{-}(x)=\min\{u(x)\,,\,0\}$. To check \eqref{eq1-8}, since the role of $x$
and $y$ is symmetric, we can always suppose $u(x)\geq u(y)$.
Also \eqref{eq1-8} is clearly an identity when $x,\,y\in \{u>0\}$ and
when $x,\,y\in\{u\leq 0\}$. So it only remains to check \eqref{eq1-8} when
$x\in\{u>0\}$ and $y\in \{u\leq 0\}$. In this case we have
$$u^{-}(x)-u^{-}(y)=0-u^{-}(y)=-u(y)\leq  u(x)-u(y).$$
We multiply by $u^{-}(x)-u^{-}(y)=-u(y)\geq 0$ from both sides and obtain \eqref{eq1-8}.

We will need a version of the linking theorem with Palais-Smale condition,
which we state here for the sake of completeness.
\begin{definition}\label{def4-2}
Let $S$ be a closed subset of a Banach space $X$ and $Q$ a submanifold of $X$ with relative boundary
$\partial Q$. We say that $S$ and $\partial Q$ link if
\begin{itemize}
\item[{ (i)}] $S\cap \partial Q=\emptyset$;
\item[{ (ii)}] $h(Q)\cap S\neq \emptyset$ for any $h\in\mathcal{H}:=\{h\in  C(X,X):\,h|_{\partial Q}=\text{id}\}$.
\end{itemize}
\end{definition}

\begin{theorem}\label{th4-1}
Suppose that $I\in C^{1}(X\,,\,\mathbb{R})$ is a functional satisfying (PS)-condition.
Consider a closed subset $S\subset X$
and a submanifold $Q\subset X$ with relative boundary $\partial Q$. Suppose
also that
\begin{itemize}
\item[{(i)}] $S$ and $\partial Q$ link;
\item[{(ii)}] $\alpha_{S}:=\inf_{u\in S}I(u)>\sup_{u\in \partial Q}I(u):=\alpha_{\partial Q}$;
\item[{(iii)}] $\sup\limits_{u\in Q}<+\infty$.
\end{itemize}
Then the real number
$$\alpha=\inf\limits_{h\in \mathcal{H}}\sup\limits_{u\in Q}I(h(u))$$
defines a critical point of $I$ with $\alpha\geq \alpha_{S}$.
\end{theorem}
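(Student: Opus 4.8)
The plan is to run the classical deformation--argument proof for linking theorems. First I would dispose of two elementary points. Since $\mathrm{id}\in\mathcal{H}$, hypothesis (iii) gives $\alpha\le\sup_{u\in Q}I(u)<+\infty$, so $\alpha$ is a genuine real number; and, trivially, $\alpha>-\infty$ because for every $h\in\mathcal H$ the fact that $S$ and $\partial Q$ link produces a point $u_h\in Q$ with $h(u_h)\in S$, whence $\sup_{u\in Q}I(h(u))\ge I(h(u_h))\ge\inf_{v\in S}I(v)=\alpha_S$. Taking the infimum over $h\in\mathcal H$ yields $\alpha\ge\alpha_S$, and together with (ii) this gives $\alpha\ge\alpha_S>\alpha_{\partial Q}$.

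Next I would argue by contradiction, assuming that $\alpha$ is not a critical value of $I$. Since $I\in C^1(X,\mathbb R)$ satisfies the (PS)-condition, the (PS)-version of the quantitative deformation lemma applies at the level $\alpha$: writing $I^c:=\{u\in X:I(u)\le c\}$ and fixing $\bar\varepsilon:=\tfrac14(\alpha-\alpha_{\partial Q})>0$, there exist a number $\varepsilon\in(0,\bar\varepsilon)$ and a continuous map $\eta\colon[0,1]\times X\to X$ such that $\eta(1,\cdot)$ is a homeomorphism of $X$, $\eta(t,u)=u$ whenever $I(u)\notin[\alpha-2\varepsilon,\alpha+2\varepsilon]$, and $\eta\bigl(1,I^{\alpha+\varepsilon}\bigr)\subset I^{\alpha-\varepsilon}$. (The (PS)-condition and the absence of critical points at level $\alpha$ are exactly what guarantee a positive lower bound for $\|I'\|$ on the strip $I^{-1}([\alpha-2\varepsilon,\alpha+2\varepsilon])$, which is what the deformation lemma needs.)

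Now I would use the definition of $\alpha$ as an infimum to choose $h\in\mathcal H$ with $\sup_{u\in Q}I(h(u))<\alpha+\varepsilon$, i.e. $h(Q)\subset I^{\alpha+\varepsilon}$, and set $\tilde h:=\eta(1,\cdot)\circ h\in C(X,X)$. On $\partial Q$ we have $I(u)\le\alpha_{\partial Q}=\alpha-4\bar\varepsilon<\alpha-2\varepsilon$, so $\eta(1,u)=u$ there; combined with $h|_{\partial Q}=\mathrm{id}$ this shows $\tilde h|_{\partial Q}=\mathrm{id}$, hence $\tilde h\in\mathcal H$. But then $\tilde h(Q)=\eta\bigl(1,h(Q)\bigr)\subset\eta\bigl(1,I^{\alpha+\varepsilon}\bigr)\subset I^{\alpha-\varepsilon}$, so $\sup_{u\in Q}I(\tilde h(u))\le\alpha-\varepsilon<\alpha$, contradicting $\alpha=\inf_{h\in\mathcal H}\sup_{u\in Q}I(h(u))$. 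Therefore $\alpha$ must be a critical value, i.e. $I$ has a critical point at level $\alpha$, and we already know $\alpha\ge\alpha_S$.

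The substantive ingredient here is the deformation lemma, which I would either invoke from the literature or prove in the standard way via a locally Lipschitz pseudo-gradient vector field whose flow, thanks to (PS), cannot stall in a neighbourhood of the level $\alpha$; everything else is bookkeeping. The one place that genuinely requires care is the calibration of $\varepsilon$: it must be chosen small enough that the strip $I^{-1}([\alpha-2\varepsilon,\alpha+2\varepsilon])$ is disjoint from $\partial Q$ (this is precisely the role of the bound $\varepsilon<\bar\varepsilon$), since otherwise the deformed map $\tilde h$ need not agree with the identity on $\partial Q$ and would fall outside the admissible class $\mathcal H$, and the argument would collapse.
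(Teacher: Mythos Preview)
Your argument is correct and is the standard route to the linking theorem: establish $\alpha\ge\alpha_S$ from the linking property, then run the deformation lemma at level $\alpha$ to derive a contradiction if $\alpha$ were regular. The calibration $\varepsilon<\tfrac14(\alpha-\alpha_{\partial Q})$ is exactly what is needed so that $\eta(1,\cdot)$ fixes $\partial Q$ pointwise and $\tilde h$ stays in $\mathcal H$.

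There is nothing to compare with in the paper itself: Theorem~\ref{th4-1} is stated there \emph{without proof}, merely ``for the sake of completeness'' as a tool to be invoked later (in the proof of Theorem~\ref{th1-2}). So your write-up supplies what the paper deliberately omits. If you want to match the paper's spirit, a one-line citation to a standard reference (e.g.\ Rabinowitz's CBMS notes or Willem's \emph{Minimax Theorems}) would suffice; your detailed argument is more than the paper provides.
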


\section{The Pohozaev identity  and some properties}

To solve problem {(FSE)}, we will look for critical points of the functional $\mathcal{I}$. In this
section we will show some properties of the Pohozaev manifold. The following result is
sketched in the paper \cite{s2012}.
\begin{proposition}\label{p2-1}
Let $u\in H^{s}(\mathbb{R}^{N})$ be a weak solution of Eq.\em{(FSE)}. Then $u$ satisfies
\begin{equation}\label{eq2-1}
\aligned
\frac{N-2s}{2}\int\limits_{\mathbb{R}^{N}}|\xi|^{2s}|\hat{u}(\xi)|^{2}d\xi+
&\frac{1}{2}\int\limits_{\mathbb{R}^{N}}\langle\nabla V(x), x\rangle|u|^{2}dx\\
&+\frac{N}{2}\int\limits_{\mathbb{R}^{N}}V(x)|u|^{2}dx=
N\int\limits_{\mathbb{R}^{N}}F(u)dx,\\
\endaligned
\end{equation}
where $F(u)=\int_{0}^{u}f(t)dt$.
\end{proposition}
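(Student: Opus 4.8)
The plan is to derive the Pohozaev identity \eqref{eq2-1} by testing the equation against the ``scaling'' vector field $x\cdot\nabla u$, which is the standard mechanism behind Pohozaev-type identities. Since we cannot differentiate directly under the weak formulation, I would first introduce the family of rescaled functions $u_\lambda(x):=u(x/\lambda)$ for $\lambda>0$, and study the function $\lambda\mapsto \mathcal{J}(u_\lambda)$, where $\mathcal{J}$ is an auxiliary functional built from the three quadratic/nonlinear pieces of $\mathcal{I}$ but with the potential term replaced by $\int_{\mathbb{R}^N}V(\lambda^{-1}x\cdot\text{something})$ — more precisely, I would compute how each term of $\mathcal{I}$ transforms under $u\mapsto u_\lambda$. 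Using the Fourier characterization from Lemma \ref{le2-1}, one has $\widehat{u_\lambda}(\xi)=\lambda^N\hat u(\lambda\xi)$, so a change of variables gives
\begin{equation*}
\int_{\mathbb{R}^N}|\xi|^{2s}|\widehat{u_\lambda}(\xi)|^2\,d\xi=\lambda^{N-2s}\int_{\mathbb{R}^N}|\xi|^{2s}|\hat u(\xi)|^2\,d\xi,
\end{equation*}
while $\int_{\mathbb{R}^N}V(x)|u_\lambda|^2\,dx=\lambda^N\int_{\mathbb{R}^N}V(\lambda y)|u(y)|^2\,dy$ and $\int_{\mathbb{R}^N}F(u_\lambda)\,dx=\lambda^N\int_{\mathbb{R}^N}F(u)\,dx$.

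Next I would use the fact that $u$ is a critical point of $\mathcal{I}$. The key point is that, although $u_\lambda$ need not lie on any constraint, the derivative $\frac{d}{d\lambda}\mathcal{I}(u_\lambda)\big|_{\lambda=1}$ can be computed in two ways: on one hand directly from the explicit $\lambda$-dependence above, giving
\begin{equation*}
\frac{N-2s}{2}\int_{\mathbb{R}^N}|\xi|^{2s}|\hat u|^2\,d\xi+\frac{N}{2}\int_{\mathbb{R}^N}V(x)|u|^2\,dx+\frac12\int_{\mathbb{R}^N}\langle\nabla V(x),x\rangle|u|^2\,dx-N\int_{\mathbb{R}^N}F(u)\,dx,
\end{equation*}
where the term $\frac12\langle\nabla V(x),x\rangle|u|^2$ comes from differentiating $\lambda^N V(\lambda y)$ in $\lambda$ and changing variables back; on the other hand, $\frac{d}{d\lambda}\mathcal{I}(u_\lambda)\big|_{\lambda=1}=\langle\mathcal{I}'(u),\frac{d}{d\lambda}u_\lambda|_{\lambda=1}\rangle=\langle\mathcal{I}'(u),-x\cdot\nabla u\rangle=0$ provided $x\cdot\nabla u\in E$. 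Equating the two expressions yields \eqref{eq2-1} immediately.

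The main obstacle is the regularity/integrability justification: one must show that $u_\lambda\in E$ for $\lambda$ near $1$, that $\lambda\mapsto\mathcal{I}(u_\lambda)$ is differentiable, that the chain rule $\frac{d}{d\lambda}\mathcal{I}(u_\lambda)=\langle\mathcal{I}'(u_\lambda),\partial_\lambda u_\lambda\rangle$ applies, and — for the direct computation — that differentiating under the integral sign in $\int V(\lambda y)|u|^2\,dy$ is legitimate, which is where $(V_2)$ and the $C^2$ hypothesis on $V$ (controlling $\langle\nabla V(x),x\rangle$) enter. To handle $x\cdot\nabla u$, which a priori is only a distribution, I would first establish enough regularity for the weak solution $u$ (using the subcritical growth of $f$ from $(f_1)$ and \eqref{eq1-2}, together with bootstrap/regularity results for $(-\Delta)^s$ as in \cite{fqt2012,ege2012}) so that $u$ and its relevant derivatives have the decay needed to make $x\cdot\nabla u\in E$ and to discard boundary terms at infinity; alternatively, one can avoid $x\cdot\nabla u$ altogether by working purely with the difference quotient $\frac{\mathcal{I}(u_\lambda)-\mathcal{I}(u)}{\lambda-1}$ and passing to the limit, using that $\mathcal{I}\in C^1$ and $u_\lambda\to u$ in $E$ as $\lambda\to1$, which is the cleaner route and the one I would follow. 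Since the statement says this result ``is sketched in'' \cite{s2012}, I expect the write-up to be brief, citing \cite{s2012} for the approximation argument and focusing on the algebra of the scaling computation.
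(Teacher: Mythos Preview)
Your scaling argument is correct and is exactly the standard mechanism behind this Pohozaev identity; the algebra you wrote out (the $\lambda^{N-2s}$, $\lambda^N V(\lambda y)$, and $\lambda^N F$ scalings, and the derivative at $\lambda=1$) matches what the paper uses implicitly elsewhere (see \eqref{eq3-6}--\eqref{eq3-7}). The paper itself does \emph{not} give a proof of Proposition~\ref{p2-1}: it simply says the result ``is obtained just by collecting the results in \cite{dpv2013,s2012,xj2012}'', so your proposal goes further than the paper does. Your discussion of the regularity/justification step (difference quotients versus showing $x\cdot\nabla u\in E$ via bootstrap) is appropriate and is precisely what \cite{s2012,xj2012} handle; citing those for the approximation argument, as you suggest, is the right call.
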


Proposition \ref{p2-1} is obtained just by collecting the results in \cite{dpv2013,s2012,xj2012}.
The identity \eqref{eq2-1} is called ``Pohozaev identity". In this paper, we assume
that $\frac{f(u)}{u}$ is nondecreasing, and the path $m(t):=\mathcal{I}(t\,u)$ may not
intersect with the Nehari manifold
$$\mathcal{N}=\{u\in H^{s}(\mathbb{R}^{N})\setminus\{0\}:\,\langle\mathcal{I}'(u)\,,\,u\rangle=0\}$$
for a unique $t$. Indeed, it may happen that it does not intersect with the Nehari manifold at all or intersects with
the Nehari manifold at infinitely many points. This is the main reason why we are led to look for a
different approach using the Pohozaev manifold.

Now, we define the Pohozaev set associated with {(FSE)} by
$$\mathcal{P}:=\{u\in \HR\setminus\{0\}:\,\,\mathcal{J}(u)=0\},$$
where the functional $\mathcal{J}:\,\HR\to \mathbb{R}$ is defined by
\begin{equation*}\label{eq2-2}
\aligned
\mathcal{J}(u)&=\frac{N-2s}{2}\int\limits_{\mathbb{R}^{N}}|\xi|^{2s}|\hat{u}(\xi)|^{2}d\xi+
\frac{1}{2}\int\limits_{\mathbb{R}^{N}}\langle\nabla V(x), x\rangle|u|^{2}dx\\
&+\frac{N}{2}\int\limits_{\mathbb{R}^{N}}V(x)|u|^{2}dx
-N\int\limits_{\mathbb{R}^{N}}F(u)dx.\\
\endaligned\end{equation*}

In the following we shall show some properties for the functional $\mathcal{J}$ and the set $\mathcal{P}$.

\begin{proposition}\label{l2-1}
Assume that $(f_1)-(f_2)$ hold. Then $\mathcal{P}$ is a complete $C^{1}$-manifold.
\end{proposition}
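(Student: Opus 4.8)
The plan is to show that $0$ is a regular value of $\mathcal{J}$ on the set $\mathcal{P}$, so that $\mathcal{P}=\mathcal{J}^{-1}(0)\setminus\{0\}$ is a $C^1$-manifold, and then to verify completeness separately. First I would record that $\mathcal{J}\in C^{1}(\HR,\mathbb{R})$: the quadratic terms are manifestly $C^1$ because $\langle\nabla V(x),x\rangle$ is bounded (from $(V_1)$–$(V_2)$ together with $V\in C^2$, the quantity $\langle\nabla V(x),x\rangle$ lies in $[-A,0]$ where $A$ is the constant of $(V_5)$), and the term $\int_{\mathbb{R}^N}F(u)\,dx$ is $C^1$ by the growth estimate \eqref{eq1-2} and Lemma \ref{le2-2}. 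Hence it suffices to prove $\mathcal{J}'(u)\neq 0$ for every $u\in\mathcal{P}$.

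The main computation is to evaluate $\langle\mathcal{J}'(u),u\rangle$ for $u\in\mathcal{P}$ and show it is strictly negative (so in particular nonzero). Differentiating,
\begin{equation*}
\aligned
\langle\mathcal{J}'(u),u\rangle
&=(N-2s)\int_{\mathbb{R}^{N}}|\xi|^{2s}|\hat{u}|^{2}d\xi
+\int_{\mathbb{R}^{N}}\langle\nabla V(x),x\rangle|u|^{2}dx\\
&\quad+N\int_{\mathbb{R}^{N}}V(x)|u|^{2}dx
-N\int_{\mathbb{R}^{N}}f(u)u\,dx.
\endaligned
\end{equation*}
Now I would use the defining relation $\mathcal{J}(u)=0$ to eliminate the gradient term: multiplying the identity defining $\mathcal{P}$ by $2$ gives
$(N-2s)\int|\xi|^{2s}|\hat u|^2 = -\int\langle\nabla V,x\rangle|u|^2 - N\int V|u|^2 + 2N\int F(u)\,dx$, and substituting yields
\begin{equation*}
\langle\mathcal{J}'(u),u\rangle
= -N\int_{\mathbb{R}^{N}}\bigl(f(u)u-2F(u)\bigr)dx
= -2N\int_{\mathbb{R}^{N}}Q(u)\,dx.
\end{equation*}
By $(f_2)$ we have $Q(t)>0$ for all $t>0$, and since $u\in\HR\setminus\{0\}$ is nonnegative (as noted in Section 2), the set $\{u>0\}$ has positive measure, so $\int_{\mathbb{R}^N}Q(u)\,dx>0$ and therefore $\langle\mathcal{J}'(u),u\rangle<0$. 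Consequently $\mathcal{J}'(u)\neq 0$ on $\mathcal{P}$, and by the implicit function theorem (submersion theorem) $\mathcal{P}$ is a $C^1$-manifold in $\HR$.

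For completeness, I would argue that $\mathcal{P}$ is closed in $\HR$ except possibly for the origin, and that the origin is bounded away from $\mathcal{P}$. The second point is the crux: I would show there is $\rho>0$ with $\|u\|_{\HR}\geq\rho$ for all $u\in\mathcal{P}$. From $\mathcal{J}(u)=0$ and $(V_1)$, $(V_3)$ one gets a lower bound
$c\,\|u\|_{\HR}^{2}\leq \min\bigl\{\tfrac{N-2s}{2},\tfrac{N}{2}V_\infty\bigr\}\|u\|_{\HR}^2 \le N\int F(u)\,dx$ (using $(V_3)$ to absorb the $\langle\nabla V,x\rangle$ term), while \eqref{eq1-2} gives $\int F(u)\,dx\leq \tfrac{\varepsilon}{2}\|u\|_2^2+C(\varepsilon)\|u\|_p^p$ with $2<p\le 2^*_s$; choosing $\varepsilon$ small and using Lemma \ref{le2-2} forces $\|u\|_{\HR}^{2}\lesssim \|u\|_{\HR}^{p}$, hence a uniform positive lower bound on the norm. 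Then if $\{u_n\}\subset\mathcal{P}$ converges in $\HR$ to some $u$, continuity of $\mathcal{J}$ gives $\mathcal{J}(u)=0$ and the lower bound gives $u\neq 0$, so $u\in\mathcal{P}$; thus $\mathcal{P}$ is complete as a metric subspace.

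The step I expect to require the most care is not any single estimate but the bookkeeping that $\langle\nabla V(x),x\rangle$ is bounded and that all the differentiations under the integral sign (especially of $\int F(u)\,dx$ and its derivative $\int f(u)\varphi\,dx$) are justified by \eqref{eq1-2} and the continuous embedding of Lemma \ref{le2-2} — these are routine but must be in place before invoking the submersion theorem. The genuinely essential idea is the identity $\langle\mathcal{J}'(u),u\rangle=-2N\int Q(u)$, which is exactly where hypothesis $(f_2)$ (positivity of $Q$) enters and makes $0$ a regular value.
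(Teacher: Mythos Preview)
Your proposal is correct and follows essentially the same route as the paper: the key identity $\langle\mathcal{J}'(u),u\rangle=-2N\int Q(u)\,dx<0$ (via $(f_2)$) to get that $0$ is a regular value, together with the norm lower bound from $\mathcal{J}(u)=0$, $(V_3)$ and \eqref{eq1-2} to separate $\mathcal{P}$ from the origin. One small slip: a generic $u\in\mathcal{P}$ need not be nonnegative (Section~2 proves this only for critical points of $\mathcal{I}$); the correct justification is that $Q(u)=0$ on $\{u\le0\}$ because $f$ is extended by zero there, while $(V_3)$ and $\mathcal{J}(u)=0$ force $\{u>0\}$ to have positive measure, so $\int Q(u)\,dx>0$ still holds.
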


\begin{proof}[\bf Proof.]
Clearly, $\mathcal{J}$ is continuous on $H^{s}(\mathbb{R}^{N})$.
We claim that $\mathcal{P}$ is a closed subset in $H^{s}(\mathbb{R}^{N})$.
Indeed, let $\{u_{n}\}_{n}$ be a sequence in $\mathcal{P}$ such that
 $u_{n}\to u$ in $\mathcal{P}$ as $n\to \infty$.
As $\mathcal{J}$ is continuous, we get
\begin{equation*}
\aligned
\frac{N-2s}{2}\int\limits_{\mathbb{R}^{N}}|\xi|^{2s}|\hat{u}(\xi)|^{2}d\xi+
&\frac{1}{2}\int\limits_{\mathbb{R}^{N}}\langle\nabla V(x), x\rangle|u|^{2}dx\\
&+\frac{N}{2}\int\limits_{\mathbb{R}^{N}}V(x)|u|^{2}dx=
N\int\limits_{\mathbb{R}^{N}}F(u)dx,\\
\endaligned
\end{equation*}

On the other hand, it follows from $(V_1)$, $(V_3)$ and \eqref{eq1-2} that for some sufficiently small $\varepsilon>0$,
\begin{equation*}
\aligned
\frac{N-2s}{2}\|u_{n}\|^{2}_{E}
&<\frac{N-2s}{2}\int\limits_{\mathbb{R}^{N}}|\xi|^{2s}|\hat{u}_{n}(\xi)|^{2}d\xi
+\frac{N}{2}\int\limits_{\mathbb{R}^{N}}V(x)|u_{n}|^{2}dx\\
&=N\int\limits_{\mathbb{R}^{N}}F(u_{n})dx-\frac{1}{2}\int\limits_{\mathbb{R}^{N}}\langle \nabla V(x),x\rangle|u_{n}|^{2}dx\\
&\leq N\int\limits_{\mathbb{R}^{N}}F(u_{n})dx+N\int\limits_{\mathbb{R}^{N}}(V(x)-V_{\infty})|u_{n}|^{2}dx\\
&\leq \varepsilon\|u_{n}\|^{2}_{L^{2}(\mathbb{R}^{N})}+C(\varepsilon)\|u_{n}\|^{2^*_s}_{L^{2^*_s}(\mathbb{R}^{N})},\\
\endaligned
\end{equation*}
 which implies that there exists some constant $C>0$ such that
 $\|u_{n}\|^{2}_{E}\leq C\|u_{n}\|^{2^*_{s}}_{E}$¡£
  Hence, we deduce that $\|u_{n}\|_{E}\geq C_{1}>0$,
and
 \begin{equation}\label{eq2-3}
 \|u\|_{E}=\lim\limits_{n\to +\infty}\|u_{n}\|_{E}\geq C_{1}>0.
 \end{equation}
Thus, the claim is true.

To prove that $\mathcal{P}$ is a complete $C^{1}$ manifold, it is sufficient to show that
$\mathcal{J}'(u)$ is surjective on $\mathcal{P}$ and its kernel splits.
Obviously, by the assumptions of $F$ and $V$, we obtain that $\mathcal{J}$ is $C^{1}$ and
\begin{equation}\label{eq2-3}
\aligned
\langle \mathcal{J}'(u)\,,\,u\rangle=&(N-2s)\int\limits_{\mathbb{R}^{N}}|\xi|^{2s}|\hat{u}(\xi)|^{2}d\xi+
\int\limits_{\mathbb{R}^{N}}\langle\nabla V(x), x\rangle|u|^{2}dx\\
&+N\int\limits_{\mathbb{R}^{N}}V(x)|u|^{2}dx-
N\int\limits_{\mathbb{R}^{N}}f(u)u dx.\\
\endaligned\end{equation}

Moreover, for any  $u\in \mathcal{P}$, one has
\begin{equation}\label{eq2-4}
\aligned
(N-2s)\int\limits_{\mathbb{R}^{N}}|\xi|^{2s}|\hat{u}(\xi)|^{2}d\xi
&+\int\limits_{\mathbb{R}^{N}}\langle\nabla V(x), x\rangle|u|^{2}dx\\
&+N\int\limits_{\mathbb{R}^{N}}V(x)|u|^{2}dx=
2N\int\limits_{\mathbb{R}^{N}}F(u)dx.\\
\endaligned\end{equation}
Then, from \eqref{eq2-4}, \eqref{eq2-3} and $(f_2)$, we get
\begin{equation*}\label{eq2-5}
\aligned
\langle \mathcal{J}'(u)\,,\,u\rangle
&=2N\int\limits_{\mathbb{R}^{N}}F(u)dx-N\int\limits_{\mathbb{R}^{N}}f(u)u dx\\
&=N\int\limits_{\mathbb{R}^{N}}\Big(2F(u)-f(u)u\Big)dx\\
&< 0.
\endaligned
\end{equation*}
This shows that $\mathcal{J}'(u)$ is surjective.

Moreover, by $(V_3)$ we have,
\begin{equation*}\label{eq2-6}
\aligned
\mathcal{J}(u)
&=\frac{N-2s}{2}\int\limits_{\mathbb{R}^{N}}|\xi|^{2s}|\hat{u}(\xi)|^{2}d\xi+
\frac{N}{2}\int\limits_{\mathbb{R}^{N}}\frac{\langle\nabla V(x), x\rangle+NV(x)}{N}|u|^{2}dx\\
&-N\int\limits_{\mathbb{R}^{N}}F(u)dx\\
&\geq \frac{N-2s}{2}\int\limits_{\mathbb{R}^{N}}|\xi|^{2s}|\hat{u}(\xi)|^{2}d\xi
+\frac{N}{2}\int\limits_{\mathbb{R}^{N}}V_{\infty}|u|^{2}dx
-N\int\limits_{\mathbb{R}^{N}}F(u)dx\\
&\geq \frac{N-2s}{2}\|u\|^{2}_{H^{s}(\mathbb{R}^{N})}-N\frac{\varepsilon}{2}\|u\|^{2}_{L^{s}(\mathbb{R}^{N})}
-N\frac{C(\varepsilon)}{p}\|u\|^{p}_{L^{p}(\mathbb{R}^{N})}\\
&\geq \frac{1}{2}(N-2s-NC_{1}\varepsilon)\|u\|^{2}_{H^{s}(\mathbb{R}^{N})}-NC_{2}\frac{C(\varepsilon)}{p}\|u\|^{p}_{H^{s}(\mathbb{R}^{N})}.\\
\endaligned
\end{equation*}

If we take $\varepsilon>0$ sufficiently small and $\rho>0$ such that $N-2s-NC_{1}\varepsilon>0$
and $\rho^{p}<\frac{p}{4NC_{2}C(\varepsilon)}(N-2s-NC_{1}\varepsilon)\rho^{2}$,
then if taking $\|u\|_{H^{s}(\mathbb{R}^{N})}=\rho$, we get
$\mathcal{J}(u)\geq \frac{1}{4}(N-2s-NC_{1}\varepsilon)\rho^{2}>0$
and
\begin{equation}\label{eq2-7}
\mathcal{J}(u)>0\,\,\,\text{ for all}\,\,\, 0<\|u\|_{H^{s}(\mathbb{R}^{N})}<\rho.
\end{equation}
Moreover, from $\mathcal{P}\cup \{0\}=\mathcal{J}^{-1}(\{0\})$ and \eqref{eq2-7}, we obtain $\{0\}$
is an isolated point in $\mathcal{J}^{-1}(\{0\})$.
This completes the proof.

\end{proof}

\begin{proposition}\label{p2-2}
Assume that $(V_4)$ and $(f_1)-(f_2)$ hold.
Then $\mathcal{P}$ is a natural constraint of $\mathcal{I}$.
\end{proposition}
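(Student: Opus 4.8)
The statement that $\mathcal{P}$ is a \emph{natural constraint} for $\mathcal{I}$ means that every critical point of the restriction $\mathcal{I}|_{\mathcal{P}}$ is a free critical point of $\mathcal{I}$ on $\HR$. By Proposition \ref{l2-1}, $\mathcal{P}=\mathcal{J}^{-1}(\{0\})\setminus\{0\}$ is a $C^{1}$-manifold, and the computation there shows $\langle\mathcal{J}'(u),u\rangle<0$, so $\mathcal{J}'(u)\ne 0$ and the tangent space at $u\in\mathcal{P}$ is $\ker\langle\mathcal{J}'(u),\cdot\rangle$. Hence, if $u\in\mathcal{P}$ is a critical point of $\mathcal{I}|_{\mathcal{P}}$, the Lagrange multiplier rule furnishes $\lambda\in\mathbb{R}$ with $\mathcal{I}'(u)=\lambda\,\mathcal{J}'(u)$ in $(\HR)^{*}$. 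The whole proof then reduces to showing $\lambda=0$, and the plan is to test this identity along the dilation flow.

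For $t>0$ put $u_{t}(x):=u(x/t)$. Using $\widehat{u_{t}}(\xi)=t^{N}\hat u(t\xi)$ together with the change of variables $x=ty$, one obtains explicit elementary expressions for $\mathcal{I}(u_{t})$ and $\mathcal{J}(u_{t})$ in $t$; differentiating them gives, exactly as in the derivation of the Pohozaev identity (Proposition \ref{p2-1}), the relation $\frac{d}{dt}\big|_{t=1}\mathcal{I}(u_{t})=\mathcal{J}(u)$, which vanishes because $u\in\mathcal{P}$. Writing $w:=\frac{d}{dt}\big|_{t=1}u_{t}$ (formally $w=-x\cdot\nabla u$), the chain rule yields
\[
0=\tfrac{d}{dt}\big|_{t=1}\mathcal{I}(u_{t})=\langle\mathcal{I}'(u),w\rangle=\lambda\,\langle\mathcal{J}'(u),w\rangle=\lambda\,\tfrac{d}{dt}\big|_{t=1}\mathcal{J}(u_{t}),
\]
so it suffices to prove $\frac{d}{dt}\big|_{t=1}\mathcal{J}(u_{t})\ne 0$.

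This last derivative is the heart of the matter, and the place where $(V_{4})$ is used. Differentiating $t\mapsto\mathcal{J}(u_{t})$, using $\frac{d}{dt}\langle\nabla V(ty),ty\rangle=t\,\langle y,H(ty)y\rangle+\langle\nabla V(ty),y\rangle$ and $\frac{d}{dt}V(ty)=\langle\nabla V(ty),y\rangle$, and then eliminating $\int_{\R}F(u)$ via the constraint $\mathcal{J}(u)=0$, I expect to arrive at
\[
\tfrac{d}{dt}\big|_{t=1}\mathcal{J}(u_{t})=-\,s(N-2s)\!\int_{\R}\!|\xi|^{2s}|\hat u(\xi)|^{2}d\xi+\tfrac12\!\int_{\R}\!\Big[(N+1)\langle\nabla V(x),x\rangle+x\cdot H(x)\cdot x\Big]|u|^{2}dx.
\]
The first term is strictly negative since $N>2s$, $s\in(0,1)$ and $u\not\equiv 0$. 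For the bracket, write $(N+1)\langle\nabla V(x),x\rangle+x\cdot H(x)\cdot x=\langle\nabla V(x),x\rangle+N\big(\langle\nabla V(x),x\rangle+\tfrac{x\cdot H(x)\cdot x}{N}\big)$: the first summand is $\le 0$ by $(V_{2})$ and the second is $\le 0$ by $(V_{4})$. Therefore $\frac{d}{dt}\big|_{t=1}\mathcal{J}(u_{t})<0$, in particular it is nonzero, which by the displayed relation forces $\lambda=0$, i.e. $\mathcal{I}'(u)=0$; hence $u$ is a free critical point of $\mathcal{I}$ and $\mathcal{P}$ is a natural constraint.

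The delicate point — the main obstacle — is justifying the chain-rule identities $\langle\mathcal{I}'(u),w\rangle=\frac{d}{dt}\big|_{t=1}\mathcal{I}(u_{t})$ and $\langle\mathcal{J}'(u),w\rangle=\frac{d}{dt}\big|_{t=1}\mathcal{J}(u_{t})$: for a generic $u\in\HR$ the curve $t\mapsto u_{t}$ need not be differentiable in the norm of $\HR$, since $w=-x\cdot\nabla u$ may fail to lie in $\HR$. I would circumvent this in one of two ways. Either first improve the regularity and decay of $u$ — it solves $\mathcal{I}'(u)=\lambda\mathcal{J}'(u)$, so the fractional elliptic estimates mentioned in the Introduction put $w$ in $\HR$ — or, avoiding regularity entirely, test the Lagrange identity against $u$ and apply the Pohozaev identity of Proposition \ref{p2-1} to the equation $[1-\lambda(N-2s)](-\Delta)^{s}u+\big[(1-\lambda N)V(x)-\lambda\langle\nabla V(x),x\rangle\big]u=(1-\lambda N)f(u)$ (the degenerate case $\lambda(N-2s)=1$ being excluded by a short direct argument using $(f_{1})$ and $(V_{1})$); combining that identity with $\mathcal{J}(u)=0$ reproduces exactly the factor $\lambda\cdot\big(-\tfrac{d}{dt}\big|_{t=1}\mathcal{J}(u_{t})\big)=0$, and $\lambda=0$ follows as above.
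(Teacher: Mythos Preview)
Your proposal is correct and, at bottom, coincides with the paper's proof. The paper writes the Lagrange identity $\mathcal{I}'(u)+\mu\,\mathcal{J}'(u)=0$ as the modified equation
\[
(1+\mu(N-2s))(-\Delta)^{s}u+\big[(1+\mu N)V(x)+\mu\langle\nabla V(x),x\rangle\big]u=(1+\mu N)f(u),
\]
applies the Pohozaev identity of Proposition~\ref{p2-1} to it, and subtracts $\mathcal{J}(u)=0$ to isolate a factor of $\mu$ multiplying a quantity with a definite sign (by $(V_4)$), forcing $\mu=0$. This is exactly your ``second alternative'', so that route matches the paper verbatim. Your ``primary'' dilation-flow argument is only a repackaging of the same computation: since the Pohozaev functional is precisely $\tfrac{d}{dt}\big|_{t=1}\mathcal{I}(u_t)$, applying Pohozaev to the modified equation amounts to testing $\mathcal{I}'(u)=\lambda\mathcal{J}'(u)$ against the dilation direction, and the paper's expression $-\mu s(N-2s)\int|\xi|^{2s}|\hat u|^2+\tfrac{\mu N}{2}\int(\langle\nabla V,x\rangle+\tfrac{x\cdot H\cdot x}{N})|u|^2$ is (up to the sign convention $\mu=-\lambda$) your $\lambda\,\tfrac{d}{dt}\big|_{t=1}\mathcal{J}(u_t)$.

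Two small remarks. First, your coefficient $\tfrac{N+1}{2}$ in front of $\int\langle\nabla V,x\rangle|u|^2$ is in fact the correct one: differentiating $\langle\nabla V(tx),tx\rangle$ produces an extra $\langle\nabla V,x\rangle$ that the paper appears to drop, obtaining $\tfrac{N}{2}$ instead; your decomposition $(N+1)\langle\nabla V,x\rangle+x\!\cdot\!H\!\cdot\!x=\langle\nabla V,x\rangle+N\big(\langle\nabla V,x\rangle+\tfrac{x\cdot H\cdot x}{N}\big)$ then legitimately uses $(V_2)$ together with $(V_4)$, which is harmless since both hypotheses are in force. Second, the ``degenerate case'' $\lambda(N-2s)=1$ you single out is not actually exceptional in the paper's formulation: the Pohozaev identity for the modified equation is valid for every value of the coefficient in front of $(-\Delta)^s$, including zero, so no side argument is needed there.
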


\begin{proof}[\bf Proof.]
If $u$ be a critical point of the functional $\mathcal{I}$ on the manifold $\mathcal{P}$,
then $u$ is a solution of the optimization problem
$$\text{minimize}\,\,\mathcal{I}(u)\,\,\,\text{subject to}\,\,\,\mathcal{J}(u)=0.$$
Hence, by the theory of Lagrange multipliers, there exists $\mu \in\mathbb{R}$ such that
$\mathcal{I}'(u)+\mu \mathcal{J}'(u)=0$.  Thus
$$\langle \mathcal{I}'(u)\,,\,u\rangle+\mu \langle \mathcal{J}'(u)\,,\,u\rangle=0,$$
implies
\begin{equation*}\label{eq3-36}
\aligned
0
&=\int\limits_{\mathbb{R}^{N}}|\xi|^{2s}|\hat{u}(\xi)|^{2}d\xi
+\int\limits_{\mathbb{R}^{N}} V(x)|u|^{2}dx
-\int\limits_{\mathbb{R}^{N}}f(u)udx
+\mu\Big((N-2s)\int\limits_{\mathbb{R}^{N}}|\xi|^{2s}|\hat{u}(\xi)|^{2}d\xi\\
&+N\int\limits_{\mathbb{R}^{N}}V(x)|u|^{2}dx
+\int\limits_{\mathbb{R}^{N}}\langle\nabla V(x), x\rangle\,|u|^{2}dx
-N\int\limits_{\mathbb{R}^{N}}f(u)udx\Big),\\
\endaligned
\end{equation*}
which can be rewritten as
\begin{equation*}\label{eq3-36}
\aligned
\int\limits_{\mathbb{R}^{N}}\Big[1+\mu(N-2s)\Big]|\xi|^{2s}|\hat{u}(\xi)|^{2}&d\xi
+\int\limits_{\mathbb{R}^{N}}\Big[(1+\mu N) V(x)\\
&+\mu \langle\nabla V(x), x\rangle\Big]|u|^{2}dx
=\int\limits_{\mathbb{R}^{N}}\Big[1+\mu N\Big]f(u)udx.\\
\endaligned
\end{equation*}

This expression may be associated with the equation
\begin{equation}\label{eq3-37*}
(1+\mu (N-2s))(-\Delta)^{s}u+[(1+\mu N) V(x)+\mu \langle\nabla V(x), x\rangle]u=(1+\mu N)f(u).
\end{equation}
Therefore,  $u$ is the solution of Eq. \eqref{eq3-37*}.
From Proposition \ref{p2-1}, we know that  $u$ satisfies
the Pohpzaev identity $\widetilde{\mathcal{J}}(u)=0$,
where
\begin{equation}\label{eq3-39}
\aligned
\widetilde{\mathcal{J}}(u)
&=\Big(1+\mu (N-2s)\Big)\frac{N-2s}{2}\int\limits_{\mathbb{R}^{N}}|\xi|^{2s}|\hat{u}(\xi)|^{2}d\xi\\
&+\frac{N}{2}\int\limits_{\mathbb{R}^{N}}\Big((1+\mu N) V(x)+\mu \langle\nabla V(x), x\rangle\Big)|u|^{2}dx\\
&+\frac{1}{2}\int\limits_{\mathbb{R}^{N}}\Big((1+\mu N) \langle\nabla V(x), x\rangle+\mu \,\,x\cdot H(x)\cdot x\Big)|u|^{2}dx\\
&-N(1+\mu N)\int\limits_{\mathbb{R}^{N}}F(u)dx.\\
\endaligned\end{equation}
This means that $u$ is in the following Pohozaev manifold
 \begin{equation*}\label{eq3-38}
\widetilde{\mathcal{P}}=\{u\in \HR\setminus\{0\}:\,\,\widetilde{\mathcal{J}}(u)=0\}.
\end{equation*}

On the other hand, recalling that $u\in \mathcal{P}$ and substituting \eqref{eq2-4} into \eqref{eq3-39},
it follows that
\begin{equation*}\label{eq3-40}
\aligned
\widetilde{\mathcal{J}}(u)
&=\Big(1+\mu (N-2s)\Big)\frac{N-2s}{2}\int\limits_{\mathbb{R}^{N}}|\xi|^{2s}|\hat{u}(\xi)|^{2}d\xi
+\frac{1+\mu N}{2}N\int\limits_{\mathbb{R}^{N}}V(x)|u|^{2}dx\\
&+\frac{1+2\mu N}{2}\int\limits_{\mathbb{R}^{N}}\langle\nabla V(x), x\rangle|u|^{2}dx
+\frac{\mu}{2}\int\limits_{\mathbb{R}^{N}} x\cdot H(x)\cdot x\,\,|u|^{2}dx\\
&-N(1+\mu N)\int\limits_{\mathbb{R}^{N}}F(u)dx\\
&=-2\mu \,s\frac{N-2s}{2}\int\limits_{\mathbb{R}^{N}}|\xi|^{2s}|\hat{u}(\xi)|^{2}d\xi
+\frac{\mu N}{2}\int\limits_{\mathbb{R}^{N}}\langle\nabla V(x), x\rangle|u|^{2}dx\\
&+\frac{\mu}{2}\int\limits_{\mathbb{R}^{N}} x\cdot H(x)\cdot x\,|u|^{2}dx.
\endaligned\end{equation*}
Since $ \widetilde{\mathcal{J}}(u)=0$ and $(V_4)$, it  yields that
$$0\leq \mu\,s(N-2s)\int\limits_{\mathbb{R}^{N}}|\xi|^{2s}|\hat{u}(\xi)|^{2}d\xi=
\frac{\mu N}{2}\int\limits_{\mathbb{R}^{N}}\Big(\langle\nabla V(x), x\rangle
+ \frac{x\cdot H(x)\cdot x}{N}\Big)|u|^{2}dx\leq 0.$$
Therefore, $\mu=0$, and  $I'(u)+\mu J'(u)=0$ becomes $I'(u)=0$,
which implies that $u$ is a critical point of $I$.
\end{proof}

\begin{remark}\label{r3-1}
Note that in the proof of Proposition \ref{p2-2} we only use the assumption $(V_4)$.
When the function $V(x)\equiv V_{\infty}$, the similar result will be obtained.
\end{remark}

Set
$$m:=inf\{\mathcal{I}(u):\,u\in \mathcal{P}\}.$$
As a consequence of \eqref{eq2-1} and $(V_2)$,  for any $u\in \mathcal{P}$, we have
\begin{equation*}\aligned
\mathcal{I}(u)
&=\frac{1}{2}\int\limits_{\mathbb{R}^{N}}|\xi|^{2s}|\hat{u}(\xi)|^{2}d\xi
+\frac{1}{2}\int\limits_{\mathbb{R}^{N}}V(x)|u|^{2}dx-\int\limits_{\mathbb{R}^{N}}F(u)dx\\
&=\frac{1}{2}\int\limits_{\mathbb{R}^{N}}|\xi|^{2s}|\hat{u}(\xi)|^{2}d\xi
+\frac{1}{2}\int\limits_{\mathbb{R}^{N}}V(x)|u|^{2}dx-
\Big(\frac{N-2s}{2N}\int\limits_{\mathbb{R}^{N}}|\xi|^{2s}|\hat{u}(\xi)|^{2}d\xi\\
&+\frac{1}{2}\int\limits_{\mathbb{R}^{N}}V(x)|u|^{2}dx+\frac{1}{2N}\int\limits_{\mathbb{R}^{N}}\langle \nabla V(x),x\rangle|u|^{2}dx\Big)\\
&=\frac{s}{N}\int\limits_{\mathbb{R}^{N}}|\xi|^{2s}|\hat{u}(\xi)|^{2}d\xi
-\frac{1}{2N}\int\limits_{\mathbb{R}^{N}}\langle \nabla V(x),x\rangle|u|^{2}dx\\
&\geq \frac{s}{N}\int\limits_{\mathbb{R}^{N}}|\xi|^{2s}|\hat{u}(\xi)|^{2}d\xi>0,
\endaligned\end{equation*}
that is,  it turns out that $m$ is a positive number.

Due to condition $(V_{1})$, the Eq.(FSE) becomes the autonomous problem at infinity, i.e.,

\begin{align*}\label{eq3-1}
\quad\quad\qquad \qquad \qquad \qquad  (-\Delta)^{s}u+V_{\infty}\, u=f(u)\quad \text{in}\,\,\,\mathbb{R}^{N}.
\quad\quad \quad\quad \qquad \qquad  \text{(FSE)}_{\infty}
\end{align*}

In this case we use the notation $\mathcal{J}_{\infty}(u)$ and $\mathcal{P}_{\infty}$,
respectively, for the functional and the natural constraint, namely,
$$\mathcal{J}_{\infty}(u)=\frac{N-2s}{2}\int\limits_{\mathbb{R}^{N}}|\xi|^{2s}|\hat{u}(\xi)|^{2}d\xi+
\frac{N}{2}\int\limits_{\mathbb{R}^{N}}V_{\infty} |u|^{2}dx-N\int\limits_{\mathbb{R}^{N}}F(u)dx,$$

$$\mathcal{P}_{\infty}=\{u\in \HR\setminus\{0\}:\,\mathcal{J}_{\infty}(u)=0\}.$$

Similar as in the proof of the Proposition \ref{p2-2},
we have that $\mathcal{P}_{\infty}$ is a natural constraint of $\mathcal{I}_{\infty}$,
and so there exists
$$m_{\infty}:=inf\{\mathcal{I}_{\infty}(u):\,u\in \mathcal{P}_{\infty}\}.$$
We state, in the following propositions, some results about $\mathcal{P}$,
$\mathcal{P}_{\infty}$, $m$ and $m_{\infty}$.

\begin{proposition}\label{p2-3}
Eq. $(\text{FSE})_{\infty}$ has a positive ground state solution $w\in H^{s}(\mathbb{R}^{N})$,
which is radially symmetric about the origin and unique up to a translations,
\end{proposition}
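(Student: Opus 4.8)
The plan is to realize $w$ as a minimizer of $\mathcal{I}_\infty$ on the Pohozaev manifold $\mathcal{P}_\infty$, in the spirit of Proposition~\ref{p2-2} and the discussion above. First I would check $\mathcal{P}_\infty\neq\emptyset$ by dilating a fixed test function (using $(f_1)$), and record that, since $V\equiv V_\infty$ is constant, the identity computed before $(\mathrm{FSE})_{\infty}$ gives $\mathcal{I}_\infty(u)=\frac{s}{N}\int_{\mathbb{R}^N}|\xi|^{2s}|\hat u(\xi)|^{2}\,d\xi$ for all $u\in\mathcal{P}_\infty$; hence $m_\infty>0$ (as for $m$) and every minimizing sequence $\{u_n\}\subset\mathcal{P}_\infty$ has bounded Gagliardo seminorm. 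The genuine difficulty here, absent the $(AR)$-condition, is to bound $\{u_n\}$ in $L^2(\mathbb{R}^N)$; I would do this using $(f_2)$: either through Jeanjean's monotonicity trick, or by extracting via Ekeland's principle on the complete $C^{1}$-manifold $\mathcal{P}_\infty$ a Palais--Smale sequence for $\mathcal{I}_\infty$ at level $m_\infty$, for which $\mathcal{I}_\infty(u_n)-\frac12\langle\mathcal{I}_\infty'(u_n),u_n\rangle=\int_{\mathbb{R}^N}Q(u_n)\,dx$, and then combining $Q(t)\to+\infty$ with the vanishing Lemma~\ref{le2-3} to exclude $\|u_n\|_{H^s(\mathbb{R}^N)}\to\infty$.

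Once boundedness is in hand, I would gain compactness by symmetrization. Replacing $u_n$ by its symmetric decreasing rearrangement $u_n^{*}$ leaves $\|u_n\|_{L^2(\mathbb{R}^N)}$ and $\int_{\mathbb{R}^N}F(u_n)\,dx$ unchanged (the latter because $F$ depends only on $|u_n|$ and one may take $u_n\ge 0$), while the P\'{o}lya--Szeg\H{o} inequality for $(-\Delta)^{s/2}$ does not increase the seminorm; after a dilation $u_n^{*}(\cdot/\lambda_n)$ with $\lambda_n\le 1$ to return to $\mathcal{P}_\infty$ (which does not raise $\mathcal{I}_\infty$), the new sequence is still minimizing and consists of radially nonincreasing functions. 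Then $u_n^{*}\rightharpoonup w$ in $H^s(\mathbb{R}^N)$, and the compact embedding of the radial subspace into $L^p(\mathbb{R}^N)$, $2<p<2^{*}_{s}$, combined with \eqref{eq1-2}, the constraint $\mathcal{J}_\infty(u_n^{*})=0$, and the uniform bound $\|u_n^{*}\|_E\ge C_1>0$ (obtained as in Proposition~\ref{l2-1}), rules out $w=0$. Weak lower semicontinuity of $\mathcal{I}_\infty$ and $\mathcal{J}_\infty$ then forces $w\in\mathcal{P}_\infty$ with $\mathcal{I}_\infty(w)=m_\infty$, so $w$ is a ground state solving $(\mathrm{FSE})_{\infty}$, the Lagrange multiplier vanishing as in Proposition~\ref{p2-2} (see Remark~\ref{r3-1}, since $\nabla V_\infty\equiv 0$); being a rearrangement it is radially symmetric about the origin, and since $w\ge 0$ and $w\not\equiv 0$ the strong maximum principle for $(-\Delta)^s$ gives $w>0$, while standard elliptic regularity gives $w\in H^s(\mathbb{R}^N)\cap C(\mathbb{R}^N)$.

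The last and most delicate assertion, \emph{uniqueness up to translation}, cannot be reached by variational means; here I would invoke the nondegeneracy and uniqueness theory for positive radial ground states of autonomous fractional semilinear equations (Frank--Lenzmann, Frank--Lenzmann--Silvestre): one shows the linearized operator $L_{+}=(-\Delta)^s+V_\infty-f'(w)$ is nondegenerate on the radial sector and deduces, by a continuation argument, that any two positive radial ground states coincide up to a translation. In summary, I expect the two real obstacles to be (i) the $L^2$-bound for minimizing / Palais--Smale sequences without $(AR)$, for which $(f_2)$ is exactly tailored, and (ii) the uniqueness statement, which rests on the cited nondegeneracy results rather than on anything internal to this paper.
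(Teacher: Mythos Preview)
The paper does not actually prove this proposition: immediately after stating it, the authors write ``The proof of Proposition~\ref{p2-3} can be seen in \cite{dpv2013}'' and move on. So your proposal is not comparable to a proof in the paper --- there is none --- but rather supplies what the paper outsources to the literature. Your existence/symmetry sketch (minimize $\mathcal{I}_\infty$ on $\mathcal{P}_\infty$, use Schwarz rearrangement and the compact radial embedding, kill the Lagrange multiplier as in Remark~\ref{r3-1}) is a reasonable expansion of the kind of argument contained in \cite{dpv2013}, and is in fact more detailed than what the cited reference provides for this particular nonlinearity.

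Two points deserve caution. First, the $L^{2}$-bound on a minimizing/PS sequence is indeed the crux in the asymptotically linear regime; your appeal to $(f_2)$ via $\int_{\mathbb{R}^{N}}Q(u_n)\,dx$ bounded is the right mechanism, but the passage from ``$Q(t)\to\infty$'' to ``$\|u_n\|_{H^{s}}$ bounded'' needs a careful dichotomy argument (vanishing vs.\ non-vanishing) and is not quite as immediate as written. Second, and more seriously, the \emph{uniqueness up to translation} is not established in \cite{dpv2013} (that paper gives existence and radial symmetry, not uniqueness), and the Frank--Lenzmann / Frank--Lenzmann--Silvestre results you invoke are proved for pure power nonlinearities $f(u)=u^{p}$, not for a general asymptotically linear $f$ satisfying only $(f_1)$--$(f_2)$. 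So this part of the statement is, strictly speaking, not covered either by the paper's citation or by your argument; in the paper it functions as a standing hypothesis imported from the literature rather than something proved here.
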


The proof of Proposition \ref{p2-3} can be seen in \cite{dpv2013}.
Since $w$ is a ground state solution,
we have $\mathcal{I}_{\infty}(v)\geq \mathcal{I}_{\infty}(w)=m_{\infty}$
for all solutions $v$ of $\text{(FSE)}_{\infty}$.

\vspace{5mm}

From the idea in \cite{s2012}, we can
define the open set $$\mathcal{O}:=\{u\in \HR\setminus\{0\}:\,\mathcal{F}(u)>0\},$$
where $\mathcal{F}(u)=\int_{\mathbb{R}^{N}}(2F(u)-V_{\infty}|u|^{2})\,dx$.
Then we have the following result.

\begin{proposition}\label{p3-1}
For each $u\in \mathcal{O}$, there exists a unique $t_{u}>0$ such that
$$u(\frac{x}{t_{u}})\in \mathcal{P}.$$
Moreover, the function $u\mapsto t_{u}$ such that $u(\frac{x}{t_{u}})\in \mathcal{P}$ is continuous.
\end{proposition}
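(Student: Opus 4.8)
The plan is to study the one-parameter family $\gamma(t) := u(\cdot/t)$ for $t>0$ and locate the value of $t$ at which it meets $\mathcal{P}$. First I would compute how the three terms of $\mathcal{J}$ scale under the dilation $x \mapsto x/t$. For the Gagliardo/Fourier term one uses the standard change of variables: if $u_t(x) = u(x/t)$ then $\int |\xi|^{2s}|\widehat{u_t}|^2\,d\xi = t^{N-2s}\int |\xi|^{2s}|\hat u|^2\,d\xi$, while $\int |u_t|^2\,dx = t^{N}\int |u|^2\,dx$ and likewise $\int F(u_t)\,dx = t^N \int F(u)\,dx$; the only term requiring care is the one involving $\langle \nabla V(x),x\rangle$, which does not scale homogeneously because $V$ depends on $x$. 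To avoid this I would instead work, as in \cite{s2012}, with the rescaling adapted to the \emph{autonomous} quantities: evaluate $\mathcal{J}(u_t)$ and observe that by $(V_3)$ one has $NV(x)+\langle\nabla V(x),x\rangle \geq NV_\infty$, so that
\begin{equation*}
\mathcal{J}(u_t) \geq \frac{N-2s}{2}\,t^{N-2s}\!\int_{\mathbb{R}^N}\!|\xi|^{2s}|\hat u|^2\,d\xi \;-\; \frac{N}{2}\,t^{N}\,\mathcal{F}(u),
\end{equation*}
and more precisely I would write $\mathcal{J}(u_t)$ as an \emph{exact} expression of the form $a\,t^{N-2s} - b(t)\,t^{N}$ where $a = \tfrac{N-2s}{2}\|(-\Delta)^{s/2}u\|_2^2 > 0$ and $b(t)$ collects the potential-plus-nonlinearity contribution; since $V(x/t)\to V_\infty$ considerations are not available pointwise in $t$, the cleanest route is to define $g(t):=\mathcal{J}(u_t)$ directly and analyze it as a function of $t$.

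The key steps, in order: (1) Show $g$ is $C^1$ on $(0,\infty)$, with $g(0^+)$ behaving like $+\infty \cdot$ nothing — more carefully, that $g(t)>0$ for $t$ small. This follows because for small $t$ the $t^{N-2s}$ term dominates the $t^N$ term (as $N-2s < N$), so $g(t) = t^{N-2s}\big(a - t^{2s}[\cdots]\big) > 0$ once $t$ is small enough; alternatively use \eqref{eq2-7} after noting $\|u_t\|_{H^s}\to 0$ is \emph{false} for the $L^2$ part, so one really does need the explicit scaling. (2) Show $g(t)\to -\infty$ as $t\to +\infty$: here the hypothesis $u\in\mathcal{O}$, i.e. $\mathcal{F}(u)=\int(2F(u)-V_\infty|u|^2)\,dx>0$, is exactly what is needed, because the $t^N$-coefficient is asymptotically $-\tfrac{N}{2}\mathcal{F}(u)<0$ (using $(V_1)$ that $V(x/t)\to V_\infty$ together with dominated convergence, or more robustly the bound from $(V_3)$ quoted above), so $g(t)\leq a t^{N-2s} - c\,t^N$ with $c>0$ for large $t$, which $\to-\infty$. (3) Conclude by the intermediate value theorem that there exists $t_u>0$ with $g(t_u)=0$, i.e. $u_{t_u}\in\mathcal{P}$. (4) Establish \emph{uniqueness} of $t_u$: this is where one uses $(f_2)$. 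I would show that at any zero $t_\ast$ of $g$ the derivative $g'(t_\ast)<0$ strictly — compute $g'(t) = (N-2s)a\,t^{N-3s+\cdots}$ type terms and combine with the constraint $g(t_\ast)=0$ to eliminate $a$, reducing $g'(t_\ast)$ to a negative multiple of $\int(f(u_{t_\ast})u_{t_\ast}-2F(u_{t_\ast}))\,dx = 2\int Q(u_{t_\ast})\,dx > 0$ by $(f_2)$; this is precisely the computation already carried out in the proof of Proposition \ref{l2-1} showing $\langle\mathcal{J}'(v),v\rangle<0$ on $\mathcal{P}$. Since $g$ is positive near $0$, eventually negative, and strictly decreasing through every zero, the zero is unique.

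For continuity of $u\mapsto t_u$: suppose $u_n\to u$ in $\mathcal{O}$ (so all $u_n$ lie in the open set $\mathcal{O}$ for $n$ large and the limit does too). The coefficients $a_n = \tfrac{N-2s}{2}\|(-\Delta)^{s/2}u_n\|_2^2$ and the functions $b_n(\cdot)$ converge to those of $u$ by continuity of the relevant integrals (Lemma \ref{le2-2} and \eqref{eq1-2} give the needed uniform integrability for the $F$-term). Thus $g_n\to g$ locally uniformly on $(0,\infty)$ together with $g_n'\to g'$; since $g$ has a unique, nondegenerate ($g'(t_u)<0$) zero, a standard argument — trap $t_u$ between two values where $g$ has strict sign, and use uniform convergence — gives $t_{u_n}\to t_u$. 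One should also rule out $t_{u_n}\to 0$ or $t_{u_n}\to\infty$: the first is excluded because $g_n>0$ on a fixed neighborhood of $0$ (the constant $\rho$ from \eqref{eq2-7} can be taken uniform along the convergent sequence), and the second because $g_n(t)\leq a_n t^{N-2s} - c_n t^N$ with $\inf_n c_n>0$ once $\mathcal{F}(u_n)\to\mathcal{F}(u)>0$.

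The main obstacle I anticipate is the non-autonomous term $\int\langle\nabla V(x),x\rangle|u_t|^2\,dx$, which spoils exact homogeneity of $g$: one must either (i) push everything through the one-sided bound $(V_3)$ and a matching upper bound (note $(V_2)$ gives $\langle\nabla V(x),x\rangle\leq 0$, hence $\int\langle\nabla V(x/t),x/t\rangle|u|^2$ — wait, after rescaling it is $\int\langle\nabla V(x),x\rangle|u(x/t)|^2\,dx$, bounded between $0$ and something controlled), or (ii) invoke dominated convergence in $t$ using $(V_1)$ to handle the large-$t$ asymptotics. Getting the uniqueness step airtight — i.e. verifying $g'<0$ at \emph{every} zero and not merely at one — is the other delicate point, but it reduces cleanly to the sign computation $2F-fu<0$ furnished by $(f_2)$, exactly as in Proposition \ref{l2-1}.
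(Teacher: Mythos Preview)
Your existence argument (steps (1)--(3)) is essentially the same as the paper's: both analyze the sign of $g(t)=\mathcal{J}(u(\cdot/t))$ (the paper writes this via $\theta'(t)$ where $\theta(t)=\mathcal{I}(u(\cdot/t))$, and notes $\theta'(t)=0\iff g(t)=0$), using $(V_3)$ and $u\in\mathcal{O}$ to force $g(t)\to-\infty$, and the dominance of the $t^{N-2s}$ term to get $g(t)>0$ for small $t$.

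The uniqueness step, however, contains a genuine gap. You claim that at a zero $t_\ast$ one can reduce $g'(t_\ast)$ to a negative multiple of $\int (f(u_{t_\ast})u_{t_\ast}-2F(u_{t_\ast}))\,dx$, invoking $(f_2)$ and the computation of $\langle\mathcal{J}'(v),v\rangle$ from Proposition~\ref{l2-1}. But this cannot work: the dilation $u_t(x)=u(x/t)$ does not change the amplitude of $u$, so $\int F(u_t)\,dx=t^N\int F(u)\,dx$ and its $t$-derivative is $Nt^{N-1}\int F(u)\,dx$ --- no $f(u)u$ term ever appears. You are conflating the tangent direction $\frac{d}{dt}u_t|_{t=t_\ast}=-x\cdot\nabla u_{t_\ast}$ with the radial direction $u_{t_\ast}$ itself; the identity $\langle\mathcal{J}'(v),v\rangle=N\int(2F(v)-f(v)v)\,dx$ is the derivative along $s\mapsto sv$, not along $t\mapsto v(\cdot/t)$, and these are unrelated.

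What actually controls uniqueness here is the potential, not the nonlinearity. The paper writes $g(t)=t^{N-2s}\big[\tfrac{N-2s}{2}A-\tfrac{N}{2}t^{2s}\phi(t)\big]$ with
\[
\phi(t)=\int_{\mathbb{R}^N}\Big(2F(u)-\frac{NV(tx)+\langle\nabla V(tx),tx\rangle}{N}\,|u|^2\Big)dx,
\]
so that $g(t_\ast)=0$ reads $\tfrac{N-2s}{N}A=t_\ast^{2s}\phi(t_\ast)$. Differentiating $\phi$ brings in the Hessian of $V$, and it is precisely $(V_2)$ and $(V_4)$ that force $\phi'(t)>0$; this monotonicity (together with $t\mapsto t^{2s}$ increasing) gives a unique root. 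Your computation of $g'(t_\ast)$, done correctly, yields $g'(t_\ast)=-t_\ast^{N-1}\big[Ns\,\phi(t_\ast)+\tfrac{N}{2}t_\ast\phi'(t_\ast)\big]$, whose sign again hinges on $\phi'\geq 0$ --- i.e.\ on $(V_2)$, $(V_4)$ --- and not on $(f_2)$. Since your continuity argument leans on the nondegeneracy $g'(t_u)<0$, it inherits the same gap.
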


\begin{proof}[\bf Proof.]
Let $u\in \mathcal{O}$. We define the function $\theta:\,(0,+\infty)\to \mathbb{R}$ by
\begin{equation*}\label{eq3-5}
\theta(t):=\mathcal{I}(u(\frac{x}{t}))
=\frac{t^{N-2s}}{2}\int\limits_{\mathbb{R}^{N}}|\xi|^{2s}|\hat{u}(\xi)|^{2}d\xi
+\frac{t^{N}}{2}\int\limits_{\mathbb{R}^{N}}V(t\, x)|u|^{2}dx
-t^{N}\int\limits_{\mathbb{R}^{N}}F(u)dx.
\end{equation*}
The derivative of $\theta$ is the following
\begin{equation}\label{eq3-6}
\aligned
\theta'(t)
&=\frac{N-2s}{2}t^{N-2s-1}\int\limits_{\mathbb{R}^{N}}|\xi|^{2s}|\hat{u}(\xi)|^{2}d\xi
+\frac{Nt^{N-1}}{2}\int\limits_{\mathbb{R}^{N}}V(t\, x)|u|^{2}dx\\
&+\frac{t^{N}}{2}\int\limits_{\mathbb{R}^{N}}\langle\nabla V(t\, x), x\rangle\,|u|^{2}dx
-Nt^{N-1}\int\limits_{\mathbb{R}^{N}}F(u)dx\\
&=t^{N-2s-1}\Big(\frac{N-2s}{2}\int\limits_{\mathbb{R}^{N}}|\xi|^{2s}|\hat{u}(\xi)|^{2}d\xi
+\frac{Nt^{2s}}{2}\int\limits_{\mathbb{R}^{N}}V(t\, x)|u|^{2}dx\\
&+\frac{t^{2s}}{2}\int\limits_{\mathbb{R}^{N}}\langle\nabla V(t x), t\,x\rangle\,|u|^{2}dx
-Nt^{2s}\int\limits_{\mathbb{R}^{N}}F(u)dx
\Big)\\
&=t^{N-2s-1}\Big\{\frac{N-2s}{2}\int\limits_{\mathbb{R}^{N}}|\xi|^{2s}|\hat{u}(\xi)|^{2}d\xi\\
&-\frac{Nt^{2s}}{2}\int\limits_{\mathbb{R}^{N}}\Big(2F(u)
-\frac{NV(tx)+\langle\nabla V(t x), t\,x\rangle}{N}\,|u|^{2}\Big)dx\Big\}.\\
\endaligned
\end{equation}

On the other hand, we have
\begin{equation}\label{eq3-7}
\aligned
\mathcal{J}(u(\frac{x}{t}))
&=\frac{N-2s}{2}t^{N-2s}\int\limits_{\mathbb{R}^{N}}|\xi|^{2s}|\hat{u}(\xi)|^{2}d\xi+
\frac{t^{N}}{2}\int\limits_{\mathbb{R}^{N}}\langle\nabla V(t x), t x\rangle|u|^{2}dx\\
&+\frac{Nt^{N}}{2}\int\limits_{\mathbb{R}^{N}}V(tx)|u|^{2}dx-
Nt^{N}\int\limits_{\mathbb{R}^{N}}F(u)dx\\
&=t^{N-2s}\Big(\frac{N-2s}{2}\int\limits_{\mathbb{R}^{N}}|\xi|^{2s}|\hat{u}(\xi)|^{2}d\xi+
\frac{t^{2s}}{2}\int\limits_{\mathbb{R}^{N}}\langle\nabla V(t x), t x\rangle|u|^{2}dx\\
&+\frac{Nt^{2s}}{2}\int\limits_{\mathbb{R}^{N}}V(tx)|u|^{2}dx-
Nt^{2s}\int\limits_{\mathbb{R}^{N}}2F(u)dx\Big)\\
&=t^{N-2s-1}\Big\{\frac{N-2s}{2}\int\limits_{\mathbb{R}^{N}}|\xi|^{2s}|\hat{u}(\xi)|^{2}d\xi\\
&-\frac{Nt^{2s}}{2}\int\limits_{\mathbb{R}^{N}}\Big(2F(u)
-\frac{NV(tx)+\langle\nabla V(t x), t\,x\rangle}{N}\,|u|^{2}\Big)dx\Big\}.\\
\endaligned\end{equation}
Taking account of \eqref{eq3-6} and \eqref{eq3-7},
we infer $u(\frac{x}{t})\in \mathcal{P}$ if and only if
$\theta'(t)=0$ for some $t>0$.

Note that by conditions $(V_1)$, $(V_2)$, $(V_3)$  and the Lebesgue Dominated Convergence Theorem, we have
\begin{equation*}\label{eq3-9}
\lim\limits_{t\to +\infty} \langle\nabla V(t x), tx\rangle=0,
\end{equation*}
and
\begin{equation}\label{eq3-8}
\lim\limits_{t\to +\infty}\int\limits_{\mathbb{R}^{N}}\Big(2F(u)dx-V(t x)|u|^{2}\Big)dx
=\int\limits_{\mathbb{R}^{N}}\Big(2F(u)dx-V_{\infty}|u|^{2}\Big)dx>0.
\end{equation}
Then we get
\begin{equation}\label{eq3-10}
\theta'(t)
=t^{N-2s-1}\Big\{\frac{N-2s}{2}\int\limits_{\mathbb{R}^{N}}|\xi|^{2s}|\hat{u}(\xi)|^{2}d\xi
-\frac{Nt^{2s}}{2}\int\limits_{\mathbb{R}^{N}}\Big(2F(u)-V_{\infty}|u|^{2}\Big)dx
\Big\}+o(1).
\end{equation}
Therefore, combining \eqref{eq3-8} with \eqref{eq3-10},
 we have $\theta'(t)<0$ for $t$ sufficiently large enough for each $u\in \mathcal{O}$.

Moreover, by $(V_1)$, $(V_3)$ and $F(u)>0$, we obtain that
\begin{equation}\label{eq3-12}
\aligned
-\max\limits_{x\in \mathbb{R}^{N}}|V|\int\limits_{\mathbb{R}^{N}}|u|^{2}dx
&<\int\limits_{\mathbb{R}^{N}}\Big(2F(u)-\frac{NV(tx)+\langle\nabla V(t x), t\,x\rangle}{N}|u|^{2}\Big)dx\\
&\leq \int\limits_{\mathbb{R}^{N}}\Big(2F(u)-V_{\infty}|u|^{2}\Big)dx.\\
\endaligned
\end{equation}
So, for each $u\in \mathcal{O}$, \eqref{eq3-12} implies that
there exist two constants $C_{1},\,C_2>0$, independent
of $t$, such that
\begin{equation}\label{eq3-15}
-C_{1}\leq \int\limits_{\mathbb{R}^{N}}\Big(2F(u)-
+\frac{NV(tx)+\langle\nabla V(t x), tx\rangle}{N}\,|u|^{2}
\Big)dx
\leq C_{2}.
\end{equation}
Thus,  together with \eqref{eq3-6} and \eqref{eq3-15},
we infer that there exists a $t>0$ sufficiently small such that $\theta'(t)>0$.
Therefore, by the  continuity of function $\theta'$, there exists at least one
$t_{u}>0$ such that $\theta'(t_{u})=0$, which means that $u(\frac{x}{t_{u}})\in \mathcal{P}$.

\vspace{3mm}

For the uniqueness of $t_{u}$, we define the function
$$\phi(t)=\int\limits_{\mathbb{R}^{N}}\Big(2F(u)
-\frac{NV(tx)+\langle\nabla V(t x), tx\rangle}{N}|u|^{2}\Big)dx,\quad\forall t>0.$$
Then the function $\phi$ is well-defined and belong to $C^{1}(\mathbb{R}^{+},\mathbb{R})$ under our assumptions.

Calculating the derivative of $\phi$ and applying the conditions $(V_2)$ and $(V_4)$, we obtain
\begin{equation*}\label{eq3-17}
\aligned
\phi'(t)
&=-\int\limits_{\mathbb{R}^{N}}\Big(\langle\nabla V(t x), x\rangle|u|^{2}
+\frac{x\cdot H(t x)\cdot (t x)}{N}|u|^{2}+\frac{\langle\nabla V(tx), x\rangle}{N}|u|^{2}\Big)dx\\
&=-\frac{1}{t}\int\limits_{\mathbb{R}^{N}}\Big(\langle\nabla V(tx),(tx)\rangle
+\frac{(tx)\cdot H(tx)\cdot (t x)+\langle\nabla V(tx),(tx)\rangle}{N}\Big)|u|^{2}dx\\
&=-\frac{1}{t}\int\limits_{\mathbb{R}^{N}}\Big(\langle\nabla V(tx),(tx)\rangle
+\frac{(tx)\cdot H(tx)\cdot (tx)}{N}\Big)|u|^{2}dx\\
&-\int\limits_{\mathbb{R}^{N}}\frac{1}{N}\langle\nabla V(tx),(tx)\rangle|u|^{2}dx\\
&>0,
\endaligned
\end{equation*}
which implies that $\phi$ is strictly increasing with respect to $t$.
Then, for every fixed $u\in \mathcal{O}$,  there exists a unique $t\in (0\,,\,+\infty)$
such that
$$\frac{N-2s}{2}\int\limits_{\mathbb{R}^{N}}|\xi|^{2s}|\hat{u}(\xi)|^{2}d\xi
=\frac{N}{2}t^{2s}\phi(t).$$
Then, by \eqref{eq3-6}, we now that there exists a unique $t\in (0\,,\,+\infty)$
such that $\theta'(t)=0$ for every $u\in \mathcal{O}$.
Hence, the uniqueness of $t_{u}$ is verified.

 \vspace{3mm}

Let us now define the operator $T:\,\mathcal{O}\mapsto \mathbb{R}^{+}$ by
 $$T[u]=t_{u}.$$
In order to prove the map $u\mapsto t_{u}$ is continuous,
It is sufficient to show the continuity of $T$.

Let $\{u_{n}\}_{n}$ be a sequence such that $u_{n}\in \mathcal{O}$
and $u_{n}\to u$ in $\mathcal{O}$ as $n\to\infty$.
We will show that
\begin{equation*}\label{eq3-18*}
T[u_{n}]\to T[u]\quad \text{ as} \,\,\,n\to \infty.
\end{equation*}

First we claim that $\{T[u_{n}]\}_{n}$ is bounded.
Indeed, the proof of the Proposition \ref{p3-1} and
$u_{n}(\frac{x}{T[u_{n}]})\in \mathcal{P}$ implies that
$\theta'(T[u_{n}])=0$, that is,
\begin{equation}\label{eq3-18}
\aligned
&(N-2s)\int\limits_{\mathbb{R}^{N}}|\xi|^{2s}|\hat{u}_{n}(\xi)|^{2}d\xi\\
&=NT[u_{n}]^{2s}\int\limits_{\mathbb{R}^{N}}\Big(2F(u_{n})
-\frac{NV(T[u_{n}] x)+\langle\nabla V(T[u_{n}] x), T[u_{n}] x\rangle}{N}\,|u_{n}|^{2}\Big)dx.\\
\endaligned
\end{equation}

Since $T[u_{n}]\in (0\,,\,+\infty)$ for all $n\in \mathbb{N}$, we can suppose by contradiction that
$T[u_{n}]\to +\infty$ as $n\to \infty$.
Taking the limit $n\to \infty$ in the right hand side of the equality
\eqref{eq3-18} we have
\begin{equation}\label{eq3-19}
\aligned
&\lim\limits_{n\to\infty}T[u_{n}]^{2s}\int\limits_{\mathbb{R}^{N}}
\Big(2F(u_{n})-\frac{NV(T[u_{n}] x)+\langle\nabla V(T[u_{n}] x), T[u_{n}] x\rangle}{N}\,|u_{n}|^{2}\Big)dx.\\
&=T[u_{n}]^{2s}\Big(\mathcal{F}(u_{n})+o_{n}(1)\Big)\\
&\to +\infty.\\
\endaligned
\end{equation}
On the other hand, for any $u_{n}\in \HR$,
we know that $$\int\limits_{\mathbb{R}^{N}}|\xi|^{2s}|\hat{u}_{n}(\xi)|^{2}d\xi<\infty,$$
which is contradict to \eqref{eq3-18} and \eqref{eq3-19}.
So $\{T[u_{n}]\}_{n}$ is bounded, and there exists a convergent subsequence of
$\{T[u_{n}]\}_{n}$ such that $T[u_{n}]\to T_{0}$ as $n\to \infty$.

By $T[u_{n}]\to T_{0}$, $u_{n}\to u$ as $n\to \infty$ and the continuity of $V$ and $F$, we get
\begin{equation}\label{eq3-20}
\int\limits_{\mathbb{R}^{N}}|\xi|^{2s}|\hat{u}_{n}(\xi)|^{2}d\xi
\to \int\limits_{\mathbb{R}^{N}}|\xi|^{2s}|\hat{u}(\xi)|^{2}d\xi;
\end{equation}
\begin{equation}\label{eq3-21}
\int\limits_{\mathbb{R}^{N}}V(T[u_{n}]x)\,|u_{n}|^{2}dx
\to \int\limits_{\mathbb{R}^{N}}V(T_{0}x)\,|u|^{2}dx;
\end{equation}
\begin{equation}\label{eq3-22}
\int\limits_{\mathbb{R}^{N}}\langle\nabla V(T[u_{n}]x), T[u_{n}]x\rangle\,|u_{n}|^{2}dx
\to
\int\limits_{\mathbb{R}^{N}}\langle\nabla V(T_{0}x), T_{0}x\rangle\,|u|^{2}dx,
\end{equation}
and
\begin{equation}\label{eq3-23}
\int\limits_{\mathbb{R}^{N}}F(u_{n})\,dx\to \int\limits_{\mathbb{R}^{N}}F(u)\,dx,
\end{equation}
as $n\to \infty$.
So, by \eqref{eq3-20}, \eqref{eq3-21}, \eqref{eq3-22}, \eqref{eq3-23} and \eqref{eq3-18}, we obtain
\begin{equation*}\label{eq3-24}
(N-2s)\int\limits_{\mathbb{R}^{N}}|\xi|^{2s}|\hat{u}(\xi)|^{2}d\xi
=NT_{0}^{2s}\int\limits_{\mathbb{R}^{N}}\Big(2F(u)
-\frac{NV(T_{0} x)+\langle\nabla V(T_{0} x),T_{0} x\rangle}{N}\,|u|^{2}\Big)dx.
\end{equation*}
Meanwhile, it follows from \eqref{eq3-18} that $T_{0}>0$ is such that $\theta'(T_{0})=0$, i.e., $u(\frac{x}{T_{0}})\in \mathcal{P}$.
The uniqueness of $t_{u}$ implies that $T[u]=T_{0}$.
Hence $T[u_{n}]\to T[u]$ as $n\to \infty$ in $\mathbb{R}^{+}$.
This completes the proof.
\end{proof}

For any $u\in \mathcal{P}_{\infty}$, one has that
$$\mathcal{F}(u)
=\int\limits_{\mathbb{R}^{N}}\Big(2 F(u)-V_{\infty} |u|^{2}\Big)dx
=\frac{N-2s}{N}\int\limits_{\mathbb{R}^{N}}|\xi|^{2s}|\hat{u}(\xi)|^{2}d\xi
>0.$$
Then we have the following result.
\begin{corollary}\label{co3-1}
For each $u\in \mathcal{O}$, there exists a unique $\widetilde{t}_{u}>0$ such that
$$u(\frac{x}{\widetilde{t}_{u}})\in \mathcal{P}_{\infty}.$$
Moreover, the function $u\mapsto \widetilde{t}_{u}$ such that $u(\frac{x}{\widetilde{t}_{u}})\in \mathcal{P}_{\infty}$ is continuous.
\end{corollary}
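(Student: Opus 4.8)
The plan is to repeat the argument of Proposition~\ref{p3-1} for the constant potential $V\equiv V_{\infty}$. Since the autonomous potential is scaling invariant, the delicate monotonicity estimates needed in Proposition~\ref{p3-1} collapse, and one even obtains a closed formula for $\widetilde{t}_{u}$.

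First I would fix $u\in\mathcal{O}$ and set $\theta_{\infty}(t):=\mathcal{I}_{\infty}(u(\tfrac{x}{t}))$ for $t>0$, where $\mathcal{I}_{\infty}$ is the energy functional of $(\text{FSE})_{\infty}$. The scaling identity $\widehat{u(\cdot/t)}(\xi)=t^{N}\hat{u}(t\xi)$ yields
\[
\theta_{\infty}(t)=\frac{t^{N-2s}}{2}\int_{\mathbb{R}^{N}}|\xi|^{2s}|\hat{u}(\xi)|^{2}d\xi
+\frac{t^{N}}{2}V_{\infty}\int_{\mathbb{R}^{N}}|u|^{2}dx-t^{N}\int_{\mathbb{R}^{N}}F(u)dx,
\]
hence, after factoring out $t^{N-2s-1}$,
\[
\theta_{\infty}'(t)=t^{N-2s-1}\Big[\frac{N-2s}{2}\int_{\mathbb{R}^{N}}|\xi|^{2s}|\hat{u}(\xi)|^{2}d\xi-\frac{N t^{2s}}{2}\,\mathcal{F}(u)\Big],
\]
where $\mathcal{F}(u)=\int_{\mathbb{R}^{N}}\big(2F(u)-V_{\infty}|u|^{2}\big)dx$. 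An identical scaling computation shows $\mathcal{J}_{\infty}(u(\tfrac{x}{t}))=t\,\theta_{\infty}'(t)$, so that $u(\tfrac{x}{t})\in\mathcal{P}_{\infty}$ if and only if $\theta_{\infty}'(t)=0$.

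For existence and uniqueness I would observe that $u\in\mathcal{O}$ gives $\mathcal{F}(u)>0$, while $u\neq0$ forces $\int_{\mathbb{R}^{N}}|\xi|^{2s}|\hat{u}(\xi)|^{2}d\xi>0$; thus for $t>0$ the equation $\theta_{\infty}'(t)=0$ is equivalent to $t^{2s}=(N-2s)\int_{\mathbb{R}^{N}}|\xi|^{2s}|\hat{u}(\xi)|^{2}d\xi/\big(N\,\mathcal{F}(u)\big)$, whose unique positive root is
\[
\widetilde{t}_{u}=\Big(\frac{(N-2s)\int_{\mathbb{R}^{N}}|\xi|^{2s}|\hat{u}(\xi)|^{2}d\xi}{N\,\mathcal{F}(u)}\Big)^{\frac{1}{2s}}.
\]
For the continuity of $u\mapsto\widetilde{t}_{u}$, I would read it off this formula: if $u_{n}\to u$ in $\mathcal{O}\subset H^{s}(\mathbb{R}^{N})$, then $\int_{\mathbb{R}^{N}}|\xi|^{2s}|\hat{u}_{n}(\xi)|^{2}d\xi$ converges to the corresponding integral for $u$, $\|u_{n}\|_{L^{2}(\mathbb{R}^{N})}\to\|u\|_{L^{2}(\mathbb{R}^{N})}$ by Lemma~\ref{le2-2}, and $\int_{\mathbb{R}^{N}}F(u_{n})dx\to\int_{\mathbb{R}^{N}}F(u)dx$ because $v\mapsto\int_{\mathbb{R}^{N}}F(v)dx$ is continuous on $H^{s}(\mathbb{R}^{N})$ (as already used when showing $\mathcal{I}\in C^{1}$, via \eqref{eq1-2} and Lemma~\ref{le2-2}); hence $\mathcal{F}(u_{n})\to\mathcal{F}(u)>0$ and, since division by a positive number and $t\mapsto t^{1/(2s)}$ are continuous, $\widetilde{t}_{u_{n}}\to\widetilde{t}_{u}$.

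I do not expect a real obstacle here. The point is simply that, unlike in Proposition~\ref{p3-1}, the constant potential makes $\theta_{\infty}'(t)$ equal to the monomial $t^{N-2s-1}$ times an affine function of $t^{2s}$ with positive coefficients, so conditions $(V_{2})$--$(V_{4})$ are not needed at all. The only step that is not a one-line computation is the continuity of $v\mapsto\int_{\mathbb{R}^{N}}F(v)dx$ on $H^{s}(\mathbb{R}^{N})$, which was already recorded in Section~2; alternatively, the corollary follows directly from Proposition~\ref{p3-1} and Remark~\ref{r3-1} by taking $V\equiv V_{\infty}$.
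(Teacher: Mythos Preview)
Your argument is correct and follows the paper's own route, which is simply to repeat the proof of Proposition~\ref{p3-1} for the constant potential $V\equiv V_{\infty}$. Your observation that in the autonomous case $\theta_{\infty}'(t)=0$ solves explicitly to $\widetilde{t}_{u}=\big((N-2s)\int_{\mathbb{R}^{N}}|\xi|^{2s}|\hat u|^{2}d\xi\big/\big(N\,\mathcal{F}(u)\big)\big)^{1/(2s)}$ is a pleasant simplification that the paper does not record; it bypasses the monotonicity step based on $(V_{2})$ and $(V_{4})$ and makes continuity immediate. One small caveat: your closing remark that the result ``follows directly from Proposition~\ref{p3-1} and Remark~\ref{r3-1}'' is slightly off, since Remark~\ref{r3-1} concerns Proposition~\ref{p2-2}, and the constant potential fails the strict-inequality part of $(V_{2})$ used in the uniqueness step of Proposition~\ref{p3-1}; your direct computation is therefore the cleaner way to close the argument.
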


The proof of Corollary \ref{co3-1} is just similar to the proof of Proposition \ref{p3-1}.
An immediate consequence of Proposition \ref{p3-1} and Corollary \ref{co3-1}
is that for some $u\in \HR\setminus\{0\}$
can be projected on $\mathcal{P}$ or on $\mathcal{P}_{\infty}$
if and only if $\mathcal{F}(u)>0$.

 Now we have the following result.

\begin{proposition}\label{p3-2}
Let $u\in H^{s}(\mathbb{R}^{N})\setminus\{0\}$. Then
\begin{itemize}
\item[(i)]if $u\in \mathcal{P}$, there exists $\tilde{t}_{u}$ such that
$u(\frac{x}{\tilde{t}_{u}})\in \mathcal{P}_{\infty}$ and $ 0<\tilde{t}_{u}<1$;
\item[(ii)]if $u\in \mathcal{P}_{\infty}$,  there exists $t_{u}>0$ such that
$u(\frac{x}{t_{u}})\in \mathcal{P}$ and $t_{u}>1$.
\end{itemize}
\end{proposition}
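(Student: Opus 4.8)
The plan is to reduce everything to the one–variable scaling analysis already carried out in Proposition \ref{p3-1} and Corollary \ref{co3-1}, and then to locate the scaling parameter by comparing two identities. Write $D(u):=\int_{\mathbb{R}^{N}}|\xi|^{2s}|\hat{u}(\xi)|^{2}\,d\xi$ and recall $\mathcal{F}(u)=\int_{\mathbb{R}^{N}}(2F(u)-V_{\infty}|u|^{2})\,dx$. The key object is the auxiliary function $\phi(t)=\int_{\mathbb{R}^{N}}\big(2F(u)-\tfrac{1}{N}(NV(tx)+\langle\nabla V(tx),tx\rangle)|u|^{2}\big)\,dx$ from the proof of Proposition \ref{p3-1}: there it is shown (using $(V_2)$ and $(V_4)$) that $\phi$ is strictly increasing on $(0,+\infty)$, and (using $(V_1)$, $(V_3)$ and dominated convergence, cf. \eqref{eq3-8}) that $\phi(t)\to\mathcal{F}(u)$ as $t\to+\infty$; in particular $\phi(t)<\mathcal{F}(u)$ for every finite $t>0$.

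For \textbf{part (ii)}, assume $u\in\mathcal{P}_{\infty}$. As computed immediately before Corollary \ref{co3-1}, $\mathcal{F}(u)=\tfrac{N-2s}{N}D(u)>0$, so $u\in\mathcal{O}$ and Proposition \ref{p3-1} gives a unique $t_{u}>0$ with $u(x/t_{u})\in\mathcal{P}$. Since $u(x/t_{u})\in\mathcal{P}$ means $\theta'(t_{u})=0$, formula \eqref{eq3-6} yields $\tfrac{N-2s}{2}D(u)=\tfrac{N}{2}t_{u}^{2s}\phi(t_{u})$, while $u\in\mathcal{P}_{\infty}$ (i.e. $\mathcal{J}_{\infty}(u)=0$) gives $\tfrac{N-2s}{2}D(u)=\tfrac{N}{2}\mathcal{F}(u)$. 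Equating, $\mathcal{F}(u)=t_{u}^{2s}\phi(t_{u})$; as $\mathcal{F}(u)>0$ this forces $\phi(t_{u})>0$, and then $t_{u}^{2s}=\mathcal{F}(u)/\phi(t_{u})>1$ because $\phi(t_{u})<\mathcal{F}(u)$. Hence $t_{u}>1$.

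For \textbf{part (i)}, assume $u\in\mathcal{P}$, i.e. $\mathcal{J}(u)=0$. Evaluating \eqref{eq3-7} at $t=1$ this reads $\tfrac{N-2s}{2}D(u)=\tfrac{N}{2}\phi(1)$; since $u\neq0$ the left side is positive, so $0<\phi(1)$, and then $0<\phi(1)<\mathcal{F}(u)$ by the properties of $\phi$ recalled above. In particular $\mathcal{F}(u)>0$, so $u\in\mathcal{O}$ and Corollary \ref{co3-1} provides a unique $\tilde{t}_{u}>0$ with $u(x/\tilde{t}_{u})\in\mathcal{P}_{\infty}$. Because $V_{\infty}$ is constant, the rescaled Pohozaev identity for $\mathcal{P}_{\infty}$ (the analogue of \eqref{eq3-7} with $V(tx)$ replaced by $V_{\infty}$ and $\langle\nabla V(tx),tx\rangle$ by $0$) is simply $\tfrac{N-2s}{2}D(u)=\tfrac{N}{2}\tilde{t}_{u}^{2s}\mathcal{F}(u)$, so $\tilde{t}_{u}^{2s}=\tfrac{(N-2s)D(u)}{N\mathcal{F}(u)}=\tfrac{\phi(1)}{\mathcal{F}(u)}\in(0,1)$, i.e. $0<\tilde{t}_{u}<1$.

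The only step with genuine content is the strictness of the two final inequalities, which both boil down to $\phi(\cdot)<\mathcal{F}(u)$ at the relevant (finite) value of the parameter. This is precisely the strict monotonicity of $\phi$ together with $\phi(t)\to\mathcal{F}(u)$, both established inside the proof of Proposition \ref{p3-1} from $(V_1)-(V_4)$; so I expect the main care to be in invoking those facts (and in checking $\phi(1)>0$, which legitimizes the passage $u\in\mathcal{P}\Rightarrow u\in\mathcal{O}$) rather than in proving anything new. A secondary bookkeeping point is to keep the correct powers of $t$ in the rescaled functionals, for which \eqref{eq3-6}–\eqref{eq3-7} can be quoted verbatim.
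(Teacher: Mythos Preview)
Your argument is correct and follows the same architecture as the paper: show $u\in\mathcal{O}$, invoke Proposition~\ref{p3-1}/Corollary~\ref{co3-1} for existence and uniqueness of the scaling parameter, then compare the two rescaled Pohozaev identities to locate that parameter relative to $1$.

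The one genuine tactical difference is how you obtain the strict inequality that drives the conclusion. You argue indirectly: $\phi$ is strictly increasing (from $(V_2)$, $(V_4)$, as in the proof of Proposition~\ref{p3-1}) and $\phi(t)\to\mathcal{F}(u)$ (from $(V_1)$--$(V_3)$ and dominated convergence), hence $\phi(t)<\mathcal{F}(u)$ for every finite $t$. The paper instead reads the inequality off $(V_3)$ directly: since $NV(y)+\langle\nabla V(y),y\rangle\ge NV_\infty$ for all $y$, one has $\phi(t)\le\mathcal{F}(u)$ for every $t$, and then the identities force $t_u>1$, $\tilde t_u<1$. The paper's route is shorter and uses only $(V_3)$ for this step; your route has the advantage that the \emph{strictness} of the inequality is cleanly justified (the paper writes ``$<$'' in \eqref{eq3-28} from a hypothesis that is only ``$\ge$''), at the cost of invoking the full $(V_1)$--$(V_4)$ machinery already set up in Proposition~\ref{p3-1}. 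Either way the bookkeeping you flag---the powers of $t$ in \eqref{eq3-6}--\eqref{eq3-7} and the constant-potential version for $\mathcal{P}_\infty$---is exactly what is needed, and your identification $\tilde t_u^{\,2s}=\phi(1)/\mathcal{F}(u)$ matches the paper's \eqref{eq3-29}.
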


\begin{proof}[\bf Proof.]
(i) For each $u\in \mathcal{P}$, by $(V_3)$, we have
\begin{equation}\label{eq3-28}
\aligned
\frac{N-2s}{N}\int\limits_{\mathbb{R}^{N}}|\xi|^{2s}|\hat{u}(\xi)|^{2}d\xi
&=\int\limits_{\mathbb{R}^{N}}\Big(2F(u)-\frac{NV(x)+\langle\nabla V(x), x\rangle}{N}|u|^{2}\Big)dx\\
&<\int\limits_{\mathbb{R}^{N}}\Big(2F(u)-V_{\infty}|u|^{2}\Big)dx=\mathcal{F}(u).\\
\endaligned
\end{equation}
Then $\mathcal{F}(u)>0$ for all $u\in \mathcal{P}$, that is, $u\in \mathcal{O}$.
Therefore, from Corollary \ref{co3-1},
there exists a unique $\tilde{t}_{u}>0$ such that $u(\frac{x}{\tilde{t}_{u}})\in \mathcal{P}_{\infty}$.

 Next we show that $\tilde{t}_{u}<1$.
 Note that $u(\frac{x}{\tilde{t}_{u}})\in \mathcal{P}_{\infty}$ yields
 \begin{equation}\label{eq3-29}
 \frac{N-2s}{2}\tilde{t}_{u}^{N-2s}\int\limits_{\mathbb{R}^{N}}|\xi|^{2s}|\hat{u}(\xi)|^{2}d\xi=
\frac{N}{2}\tilde{t}_{u}^{N}\int\limits_{\mathbb{R}^{N}}\Big(2F(u)-V_{\infty} |u|^{2}\Big)dx
=\frac{N}{2}\tilde{t}_{u}^{\,N}\mathcal{F}(u).
 \end{equation}
Taking account of  \eqref{eq3-28} and \eqref{eq3-29}, we infer $\tilde{t}_{u}$ satisfies
\begin{equation*}\label{eq3-30}
\tilde{t}_{u}^{\,\,2s}\mathcal{F}(u)=
\frac{N-2s}{N}\int\limits_{\mathbb{R}^{N}}|\xi|^{2s}|\hat{u}(\xi)|^{2}d\xi
<\mathcal{F}(u),
 \end{equation*}
which implies that $0<\tilde{t}_{u}<1$.

\vspace{5mm}

(ii) Let $u\in \mathcal{P}_{\infty}$, and then we have
\begin{equation}\label{eq3-25}
\aligned
\mathcal{F}(u)
=\int\limits_{\mathbb{R}^{N}}\Big(2 F(u)-V_{\infty} |u|^{2}\Big)dx
=\frac{N-2s}{N}\int\limits_{\mathbb{R}^{N}}|\xi|^{2s}|\hat{u}(\xi)|^{2}d\xi
>0,
\endaligned
\end{equation}
which implies that $u\in \mathcal{O}$ for all $u\in \mathcal{P}_{\infty}$. Thus,
from Proposition \ref{p3-1}, we have that there exists a $t_{u}\in (0\,,\,+\infty)$ such that
$u(\frac{x}{t_{u}})\in \mathcal{P}$.

 Next we will show that $t_{u}>1$.
 From $u(\frac{x}{t_{u}})\in \mathcal{P}$,
 one has that $\mathcal{J}(u(\frac{x}{t_{u}}))=0$. Therefore, we obtain
\begin{equation*}\label{eq3-26}
\aligned
&t_{u}^{N-2s}\Big(\frac{N-2s}{2}\int\limits_{\mathbb{R}^{N}}|\xi|^{2s}|\hat{u}(\xi)|^{2}d\xi+
\frac{t_{u}^{2s}}{2}\int\limits_{\mathbb{R}^{N}}\langle\nabla V(t_{u} x), t_{u} x\rangle|u|^{2}dx\\
&+\frac{N}{2}t_{u}^{2s}\int\limits_{\mathbb{R}^{N}}V(t_{u}x)|u|^{2}dx-
Nt_{u}^{2s}\int\limits_{\mathbb{R}^{N}}F(u)dx\Big)=0,\\
\endaligned\end{equation*}
that is,
\begin{equation}\label{eq3-27}
\frac{N-2s}{N}\int\limits_{\mathbb{R}^{N}}|\xi|^{2s}|\hat{u}(\xi)|^{2}d\xi
=t_{u}^{2s}\int\limits_{\mathbb{R}^{N}}\Big(2F(u)-\frac{\langle\nabla V(t_{u} x), t_{u} x\rangle+NV(t_{u}x)}{N}|u|^{2}\Big)dx.
\end{equation}
So, by condition $(V_3)$, \eqref{eq3-25} and \eqref{eq3-27}, we get
\begin{equation*}
\mathcal{F}(u)
<t_{u}^{2s}\int\limits_{\mathbb{R}^{N}}\Big(2F(u)-V_{\infty}|u|^{2}\Big)dx
=t_{u}^{2s}\,\mathcal{F}(u),
\end{equation*}
 which implies that $t_{u}>1$. This completes the proof.
\end{proof}

\section{A compactness result}
In this section we deal with the behavior of the (PS)-sequence of $\mathcal{I}$.
\begin{lemma}\label{l4-5}
Let $\{u_{n}\}\subset \HR$ be a \em{(PS)}-sequence of $\mathcal{I}$ constrained on $\mathcal{P}$,
 i.e., $u_{n}\in \mathcal{P}$ and
 \begin{equation}\label{eq4-1}
 \aligned
 &(a)\,\,\mathcal{I}(u_{n})\,\,\,\text{is bounded};\\
 &(b)\,\,\mathcal{I}'(u_{n})\to 0\,\,\,\text{as}\,\,\,n\to \infty.
 \endaligned
 \end{equation}
Then replacing $\{u_n\}_{n}$ by a subsequence, if necessary, there exists a solution $\bar{u}$ of Eq. {(FSE)},
a number $k\in \mathbb{N}\cup\{0\}$, $k$ functions $u^1,\, u^2,\,\cdot\cdot\cdot,\, u^k \in H^{s}(\mathbb{R}^{N})$
and $k$ sequences of points $\{y^{j}_{n}\}\subset \mathbb{R}^{N}$, $0\leq j\leq k$, such that
\begin{itemize}
\item[{(i)}] $|y_{n}^{j}|\to +\infty$, $|y_{n}^{j}-y_{n}^{i}|\to +\infty$, if $i\neq j$, $n\to \infty$;
\item[{(ii)}] $u_{n}-\sum\limits_{i=1}^{k}u^{i}(x-y_{n}^{i})\to \bar{u}$ in $E$;
\item[{(iii)}] $\mathcal{I}(u_n)\to \mathcal{I}(\bar{u})+\sum\limits_{i=1}^{k}\mathcal{I}_{\infty}(u^i)$.
\item[{(iv)}] $u^{j}$ are nontrivial weak solutions of Eq. $\text{\em(FSE)}_{\infty}$.
\end{itemize}
Moreover, we agree that in the case $k=0$ the above holds without $u^{j}$.
\end{lemma}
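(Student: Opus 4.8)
The plan is to run the standard concentration–compactness (bubbling) argument adapted to the fractional setting, in the spirit of the classical profile-decomposition results of Benci–Cerami / Jeanjean–Tanaka, using the autonomous limit problem $\text{(FSE)}_\infty$ as the source of the bubbles. First I would show that the constrained (PS)-sequence $\{u_n\}$ is bounded in $E$. Here I cannot invoke (AR); instead I combine $\mathcal{I}(u_n)=O(1)$ with the identity $\mathcal{I}(u)=\frac{s}{N}\int|\xi|^{2s}|\hat u|^2d\xi-\frac{1}{2N}\int\langle\nabla V(x),x\rangle|u|^2dx$ valid on $\mathcal{P}$ (derived earlier using the Pohozaev identity and $(V_2)$), which already controls the Gagliardo seminorm; to control the $L^2$-part I use $(V_3)$ together with $\mathcal{J}(u_n)=0$ and the growth bound \eqref{eq1-2}, exactly as in the proof of Proposition~\ref{l2-1}. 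Since $\mathcal{P}$ is a natural constraint (Proposition~\ref{p2-2}), the constrained (PS)-condition \eqref{eq4-1} upgrades to a genuine (PS)-sequence for the free functional $\mathcal{I}$ on $E$, so $\mathcal{I}'(u_n)\to 0$ in $E^{-1}$.

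Next I pass to a subsequence with $u_n\rightharpoonup \bar u$ weakly in $E$; a routine argument (testing $\mathcal{I}'(u_n)\to0$ against $v\in C_c^\infty$ and using Lemma~\ref{le2-2}'s local compactness) gives that $\bar u$ is a weak solution of $\text{(FSE)}$. Set $v_n^1:=u_n-\bar u$. If $v_n^1\to0$ in $E$ we are done with $k=0$; otherwise I apply the vanishing dichotomy: if $\sup_{y}\int_{B_R(y)}|v_n^1|^2\to0$ then by Lemma~\ref{le2-3} $v_n^1\to0$ in $L^p$ for $2<p<2^*_s$, which combined with $(f_1)$, \eqref{eq1-2} and the splitting $\|u_n\|_E^2=\|\bar u\|_E^2+\|v_n^1\|_E^2+o(1)$ (Brezis–Lieb plus weak convergence) forces $\|v_n^1\|_E\to0$, a contradiction. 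Hence non-vanishing holds: there are $y_n^1$ with $|y_n^1|\to\infty$ (the divergence to infinity is because $\bar u$ already absorbs the mass near the origin, cf. $V\to V_\infty$ at infinity) and $v_n^1(\cdot+y_n^1)\rightharpoonup u^1\neq0$. Using $\langle\nabla V(x),x\rangle\to0$ and $V(x)\to V_\infty$ as $|x|\to\infty$, a translation argument shows $u^1$ is a nontrivial weak solution of the autonomous problem $\text{(FSE)}_\infty$, i.e.\ (iv). I then set $v_n^2:=v_n^1-u^1(\cdot-y_n^1)$, verify the energy and norm splittings $\mathcal{I}(u_n)=\mathcal{I}(\bar u)+\mathcal{I}_\infty(u^1)+\mathcal{I}(v_n^2)+o(1)$ and the analogous $E$-norm identity, and iterate.

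The iteration terminates after finitely many steps: each extracted bubble $u^i$ is a nontrivial solution of $\text{(FSE)}_\infty$, hence $\mathcal{I}_\infty(u^i)\ge m_\infty>0$ by Proposition~\ref{p2-3} and the positivity of $m_\infty$ (shown as for $m$), while the total energy $\mathcal{I}(u_n)$ stays bounded and the partial sums $\mathcal{I}(\bar u)+\sum_{i\le j}\mathcal{I}_\infty(u^i)$ are non-decreasing by the splitting; so after $k\le (\text{bound})/m_\infty$ bubbles the remainder $v_n^{k+1}$ must vanish, giving $v_n^{k+1}\to0$ in $E$ and statements (ii), (iii). The mutual divergence $|y_n^i-y_n^j|\to\infty$ for $i\neq j$ is guaranteed at each extraction step: if two centers stayed at bounded distance the corresponding profiles would be translates sitting in the same window, contradicting that $u^j$ is the weak limit of $v_n^j(\cdot+y_n^j)$ \emph{after} $u^i(\cdot-y_n^i)$ has been subtracted (this is the usual orthogonality of bumps). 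The main obstacle is precisely this bookkeeping — establishing the exact $E$-norm and energy splitting at every stage without an (AR)-type condition — which requires the Brezis–Lieb lemma for the Gagliardo seminorm and a careful use of $(f_1)$–$(f_2)$ to handle the nonlinear terms $\int F(u_n)dx$ and $\int f(u_n)u_n\,dx$ under weak-plus-translation convergence; once the splitting is in hand, finiteness of $k$ and all four conclusions follow mechanically.
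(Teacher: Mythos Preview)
Your proposal is correct and follows essentially the same route as the paper: boundedness via the Pohozaev identity on $\mathcal{P}$ combined with $(V_2)$--$(V_3)$ and \eqref{eq1-2}, upgrading the constrained (PS)-sequence to a free one through the Lagrange-multiplier argument underlying Proposition~\ref{p2-2}, passing to a weak limit $\bar u$ solving (FSE), then running the Brezis--Lieb/Lions dichotomy with Lemma~\ref{le2-3} to peel off bubbles solving $\text{(FSE)}_\infty$, and terminating the iteration because each bubble carries energy at least $m_\infty>0$. The only cosmetic difference is that the paper bounds $\|u_n\|_{L^{2^*_s}}$ first via Sobolev and then the full $E$-norm through $\mathcal{I}(u_n)$, whereas you go through $\mathcal{J}(u_n)=0$ as in Proposition~\ref{l2-1}; both work.
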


\begin{proof}[\bf Proof.]
We first observe that for any $u_{n}\in \mathcal{P}$,
\begin{equation*}
\aligned
\mathcal{I}(u_{n})
&=\frac{1}{2}\int\limits_{\mathbb{R}^{N}}|\xi|^{2s}|\hat{u}_{n}(\xi)|^{2}d\xi
+\frac{1}{2}\int\limits_{\mathbb{R}^{N}}V(x)|u_{n}|^{2}dx
-\int\limits_{\mathbb{R}^{N}}F(u_{n})dx\\
&=\frac{s}{N}\int\limits_{\mathbb{R}^{N}}|\xi|^{2s}|\hat{u}_{n}(\xi)|^{2}d\xi-
\frac{1}{2N}\int\limits_{\mathbb{R}^{N}}\langle \nabla V(x),x \rangle|u_{n}|^{2}dx\\
&\geq \frac{s}{N}\int\limits_{\mathbb{R}^{N}}|\xi|^{2s}|\hat{u}_{n}(\xi)|^{2}d\xi.
\endaligned
\end{equation*}
Hence, $\mathcal{I}(u_{n})$ bounded implies $\|u_{n}\|_{\dot{H}^{s}(\mathbb{R}^{N})}$ is bounded.
By Sobolev embedding theorem, it follow that $\|u_{n}\|_{L^{2^*_{s}}(\mathbb{R}^{N})}$ is also bounded.

Now by \eqref{eq1-2} and Lemma \ref{le2-2}, there exists $C_{1}>0$ such that
\begin{equation*}\label{eq4-2}
\aligned
\mathcal{I}(u_{n})
&\geq \frac{1}{2}\|u_{n}\|^{2}_{E}-\frac{\varepsilon}{2}\int\limits_{\mathbb{R}^{N}}|u_n|^{2}dx
-C(\varepsilon)\int\limits_{\mathbb{R}^{N}}|u_n|^{2^*_{s}}dx\\
&\geq \frac{1}{2}\|u_{n}\|^{2}_{E}-C_1\frac{\varepsilon}{2}\|u_n\|^{2}_{E}
-C(\varepsilon)\|u_n\|^{2^*_{s}}_{L^{2^*_{s}}(\mathbb{R}^{N})}\\
&\geq \frac{1}{2}(1-\varepsilon C_{1})\|u_{n}\|^{2}_{E}-C(\varepsilon)\|u_n\|^{2^*_{s}}_{L^{2^*_{s}}(\mathbb{R}^{N})}.\\
\endaligned\end{equation*}
Hence, taking $\varepsilon>0$ sufficiently small  such that $1-\varepsilon C_{1}>0$,
then it is easy to see that  $\{u_{n}\}_{n}$ is bounded in $E$.

We now claim that
\begin{equation}\label{eq4-3}
\mathcal{I}'(u_{n})\to 0\quad \text{as}\,\,n\to \infty.
\end{equation}
In fact, from \eqref{eq4-1} (b) we have
\begin{equation}\label{eq4-4}
0=\mathcal{I}'|_{\mathcal{P}}(u_{n})=\mathcal{I}'(u_{n})-\mu_{n}\mathcal{J}'(u_{n}),
\end{equation}
for some $\mu_{n}\in\mathbb{R}$. Similar as in the proof of Proposition \ref{p2-2}, we get that
$\mu_{n}\to 0$ as $n\to \infty$. Moreover, by the boundedness of $\{u_{n}\}_{n}$
and $\mathcal{J}'(u_{n})$ belongs to  $ C^{1}$, we know that $\mu_{n}\mathcal{J}'(u_{n})\to 0$
as $n\to \infty$. So \eqref{eq4-3} follows from \eqref{eq4-4}.

On the other hand, since $u_{n}$ is bounded in $E$, there exists $\bar{u}\in E$ such that,
up to a subsequence,
\begin{equation}\label{eq4-5}
\aligned
&u_{n}\rightharpoonup \bar{u}\,\,\, \text{in}\,\,\,E;\\
&u_{n}\to \bar{u}\,\,\,  \text{in}\,\,\,L^{p}_{loc}(\mathbb{R}^{N}),\,\,\forall p\in [2\,,\, 2^*_{s});\\
&u_{n}(x)\to \bar{u}(x)\,\,\,  \text{a.e. in}\,\,\,\mathbb{R}^{N}.\\
\endaligned
\end{equation}
Then we deduce that $\mathcal{I}'(\bar{u})=0$, that is, $\bar{u}$ is a weak
solution of Eq. (FSE).

If $u_{n}\to \bar{u}$ strongly in $E$. we are done. So we assume that $\{u_{n}\}_{n}$
dose not converge strongly to $\bar{u}$ in $E$.
Set $$z_{n}^{1}=u_{n}-\bar{u},$$
and therefore, $z_{n}^{1}\rightharpoonup 0$ weakly in $E$.
According to the Brezis-Lieb Lemma \cite{ae1983}, we deduce
\begin{equation}\label{eq4-6}
\int\limits_{\mathbb{R}^{2N}}\frac{|u_{n}(x)-u_{n}(y)|^{2}}{|x-y|^{N+2s}}dxdy=
\int\limits_{\mathbb{R}^{2N}}\frac{|z^{1}_{n}(x)-z^{1}_{n}(y)|^{2}}{|x-y|^{N+2s}}dxdy+
\int\limits_{\mathbb{R}^{2N}}\frac{|\bar{u}(x)-\bar{u}(y)|^{2}}{|x-y|^{N+2s}}dxdy+o(1);
\end{equation}
and
\begin{equation}\label{eq4-6*}
\|u_{n}\|^{p}_{L^{p}(\mathbb{R}^{N})}=\|z_{n}^{1}\|^{p}_{L^{p}(\mathbb{R}^{N})}+\|\bar{u}\|^{p}_{L^{p}(\mathbb{R}^{N})}+o(1).
\end{equation}

Let us show that
\begin{equation}\label{eq4-7}
\int\limits_{\mathbb{R}^{N}}F(u_{n})dx=\int\limits_{\mathbb{R}^{N}}F(\bar{u})dx+\int\limits_{\mathbb{R}^{N}}F(z_{n}^{1})dx+o(1).
\end{equation}
Observe that, in view of the mean value theorem and $(f_1)$ we have
\begin{equation}\label{eq4-7*}\aligned
&|\int\limits_{\mathbb{R}^{N}}F(u_{n})dx-\int\limits_{\mathbb{R}^{N}}F(\bar{u})dx-\int\limits_{\mathbb{R}^{N}}F(z_{n}^{1})dx|\\
&\leq \int\limits_{\mathbb{R}^{N}}|F(u_{n})-F(\bar{u})|dx+\int\limits_{\mathbb{R}^{N}}|F(z_{n}^{1})|dx\\
&\leq \int\limits_{\mathbb{R}^{N}}|f(\bar{u}+t_{1}z_{n}^{1})||u_{n}-\bar{u}|dx+
\int\limits_{\mathbb{R}^{N}}|f(t_{2}z_{n}^{1})||z_{n}^{1}|dx\\
&\leq \int\limits_{\mathbb{R}^{N}}C(\varepsilon)\Big(|\bar{u}|+t_{1}|z_{n}^{1}|\Big)|z_{n}^{1}|dx
+\int\limits_{\mathbb{R}^{N}}C(\varepsilon)t_2|z_{n}^{1}||z_{n}^{1}|dx\\
&\leq C\|z_{n}^{1}\|^{2}_{E}\to 0\quad (\,\,\text{as}\,\,n\to\infty\,\,),
\endaligned
\end{equation}
 where $t_{1}\,,\,t_{2}\in (0,1)$ and $C(\varepsilon)>0$.
 Hence \eqref{eq4-7} follows from \eqref{eq4-7*}.

Moreover, since $z_{n}^{1}\rightharpoonup 0$ as $n\to \infty$
weakly in $E$ and since $V(x)\to V_{\infty}$ as $|x|\to +\infty$,
the locally compact embedding $H^{s}(\mathbb{R}^{N})\hookrightarrow L^{p}_{loc}(\mathbb{R}^{N})$, $p\in [2\,,\,2^{*}_{s})$
gives
$$\int\limits_{\mathbb{R}^{N}}(V(x)-V_{\infty})|z_{n}^{1}|^{2}dx\to 0\quad \text{as}\,\,\,n\to\infty.$$
Therefore, together with \eqref{eq4-6}, \eqref{eq4-6*} and \eqref{eq4-7}, we obtain
\begin{equation}\label{eq4-8}\aligned
\mathcal{I}_{\infty}(z^{1}_{n})
&=\frac{1}{2}\int\limits_{\mathbb{R}^{N}}\frac{|z_{n}^{1}(x)-z_{n}^{1}(y)|^{2}}{|x-y|^{N+2s}}dxdy
+\frac{1}{2}\int\limits_{\mathbb{R}^{N}}V_{\infty}|z_{n}^{1}|^{2}dx-\int\limits_{\mathbb{R}^{N}}F(z_{n}^{1})dx\\
&=\frac{1}{2}\int\limits_{\mathbb{R}^{N}}\frac{|u_{n}(x)-u_{n}(y)|^{2}}{|x-y|^{N+2s}}dxdy
+\frac{1}{2}\int\limits_{\mathbb{R}^{N}}V(x)|u_{n}|^{2}dx-\int\limits_{\mathbb{R}^{N}}F(u_{n})dx\\
&-\Big(\frac{1}{2}\int\limits_{\mathbb{R}^{N}}\frac{|\bar{u}(x)-\bar{u}(y)|^{2}}{|x-y|^{N+2s}}dxdy
+\frac{1}{2}\int\limits_{\mathbb{R}^{N}}V(x)|\bar{u}|^{2}dx-\int\limits_{\mathbb{R}^{N}}F(\bar{u})dx\Big)+o(1)\\
&=\mathcal{I}(u_{n})-\mathcal{I}(\bar{u})+o(1),
\endaligned\end{equation}
and  for all $v\in H^{}(\mathbb{R}^{N})$,
\begin{equation}\label{eq4-9}\aligned
&0=\langle \mathcal{I}' (u_{n})\,,\,v\rangle\\
&=\int\limits_{\mathbb{R}^{2N}}\frac{((\bar{u}+z_{n}^{1})(x)-(\bar{u}+z_{n}^{1})(y))(v(x)-v(y))}{|x-y|^{N+2s}}dxdy
+\int\limits_{\mathbb{R}^{N}}V(x)u_{n}vdx-\int\limits_{\mathbb{R}^{N}}f(u_n)\,vdx\\
&=\int\limits_{\mathbb{R}^{2N}}\frac{\bar{u}(x)-\bar{u}(y))(v(x)-v(y))}{|x-y|^{N+2s}}dxdy+
\int\limits_{\mathbb{R}^{2N}}\frac{(z_{n}^{1}(x)-z_{n}^{1}(y))(v(x)-v(y))}{|x-y|^{N+2s}}dxdy\\
&+\int\limits_{\mathbb{R}^{N}}V(x)\bar{u}vdx+\int\limits_{\mathbb{R}^{N}}V_{\infty}z^{1}_{n}vdx-\int\limits_{\mathbb{R}^{N}}f(\bar{u})vdx
-\int\limits_{\mathbb{R}^{N}}f(z_{n}^{1})vdx+o(1)\\
&=\langle\mathcal{I}'(\bar{u})\,,\,v\rangle+\langle \mathcal{I}'_{\infty}(z_{n}^{1})\,,\,v\rangle +o(1),\\
\endaligned\end{equation}
so that $\mathcal{I}_{\infty}'(z_{n}^{1})=o(1)$ in $E'$. Furthermore,
$$\langle\mathcal{I}_{\infty}'(z_{n}^{1})\,,\,z_{n}^{1}\rangle=o(1).$$

Set
$$\delta:=\limsup\limits_{n\to +\infty}\Big(\sup\limits_{y\in \mathbb{R}^{N}}\int\limits_{B_{1}(y)}|z^{1}_{n}(x)|^{2}dx\Big).$$
If $\delta=0$, then Lemma \ref{le2-3} gives $\|z_{n}^{1}\|_{L^{p}(\mathbb{R}^{N})}\to 0$
as $n\to \infty$ for every $p\in (2\,,\,2^*_{s})$, and we infer
$$
\|u_{n}-\bar{u}\|^{2}_{E}=\|z_{n}^{1}\|^{2}\to 0\quad \text{as}\,\,\,n\to\infty.
$$
Hence $u_{n}\to \bar{u}$ as $n\to +\infty$, and we get the assertion.

In another case $\delta>0$, passing to a subsequence, we can find a sequence $\{y_{n}^{1}\}_{n}\subset \mathbb{R}^{N}$ such that
$$\int\limits_{B_{1}(y_{n}^{1})}|z_{n}^{1}|^{2}dx=
\int\limits_{B_{1}(0)}|z_{n}^{1}(x+y_{n}^{1})|^{2}dx>\frac{\delta}{2}.$$

Let us  consider $z_{n}^{1}(\cdot+y_{n}^{1})$. Since $\{z_{n}^{1}(\cdot+y_{n}^{1})\}_{n}$ is bounded, we may assume that $z_{n}^{1}(\cdot+y_{n}^{1})\rightharpoonup w_{1}\neq0$ in $E$
and $z_{n}^{1}(\cdot+y_{n}^{1})(x)\to w_{1}(x)$ a.e. in $\mathbb{R}^{N}$.
But since $z_{n}^{1}\rightharpoonup 0$, $\{y_{n}^{1}\}$ must be unbounded and, up to a subsequence, we can assume that
$|y_{n}^{1}|\to +\infty$ as $n\to \infty$.
Using the compactness of the embedding theorem and the continuity of $\mathcal{I}'_{\infty}$,
 we obtain
$$\langle\mathcal{I}'_{\infty}(w_{1})\,,\,\varphi\rangle=0\,\,\,\text{for every}\,\,\,\varphi\in E.$$

Finally, let us set
$z_{n}^{2}(x):=z_{n}^{1}(x)-w^{1}(x-y_{n}^{1})$. Then it follows that $z_{n}^{2}\rightharpoonup 0$ in $E$,
and by the same argument applied to $\mathcal{I}_{\infty}$, we obtain
$$
\mathcal{I}_{\infty}(z_{n}^{2})=\mathcal{I}(u_{n})-\mathcal{I}(\bar{u})-\mathcal{I}_{\infty}(w_{1})+o(1).
$$
and
$$
\langle \mathcal{I}'_{\infty}(z_{n}^{2}),\varphi\rangle=
\langle \mathcal{I}'_{\infty}(z_{n}^{1}),\varphi\rangle-
\langle \mathcal{I}'_{\infty}(w_{1}),\varphi(x-y_{n}^{1})\rangle=o(1),
$$
uniformly for $\|\varphi\|_{E}=1$ as $n\to +\infty$.

Now, if $z_{n}^{2}\to 0$ in $E$, we are done.
 Otherwise $z_{n}^{2}\rightharpoonup 0$
but not strongly and we repeat the argument as above.
Iterating this procedure we obtain a sequence of point $y_{n}^{j}\in \mathbb{R}^{N}$
such that $|y_{n}^{j}|\to +\infty$, $|y_{n}^{j}-y_{n}^{i}|\to +\infty$ if $i\neq j$ as $n\to +\infty$,
and a critical point $w_{i}$ of $\mathcal{I}_{\infty}$  with
$z_{n}^{i+1}(x)=z_{n}^{i}(x)-w_{i}(x-y_{n}^{i})$, $i\geq 1$ such that
$$
\mathcal{I}_{\infty}(z_{n}^{i+1})=\mathcal{I}(u_{n})-\mathcal{I}(\bar{u})-\sum\limits_{j=1}^{i}\mathcal{I}_{\infty}(w_{j})+o(1),
$$
where $\mathcal{I}'_{\infty}(w_{i})=o(1)$. Since $\mathcal{I}_{\infty}(w_{i})\geq m_{\infty}$ for all $i$
and $\mathcal{I}(u_{n})$ is bounded, the procedure has to stop after finite steps.
\end{proof}

\begin{corollary}\label{co4-1}
Let $\{u_{n}\}_{n}$ be a $(PS)_{c}$ sequence. Then $\{u_{n}\}_{n}$ is relatively compact for all
$c\in (0\,,\,m_{\infty})$.
Moreover, if $\mathcal{I}(u_{n})\to m_{\infty}$, then either $\{u_{n}\}$ is relatively compact
or the result of Lemma \ref{l4-5} holds with $k=1$, and $u^{1}=w$,
where $w$ is the ground state positive solution of Eq. $\text{\em(FSE)}_{\infty}$.
\end{corollary}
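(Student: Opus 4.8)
The plan is to feed the sequence into the concentration--compactness splitting of Lemma~\ref{l4-5} and then close the argument by an energy count. Passing to a subsequence, Lemma~\ref{l4-5} produces a weak solution $\bar u$ of (FSE), an integer $k\ge 0$, nontrivial weak solutions $u^{1},\dots,u^{k}$ of $\text{(FSE)}_{\infty}$ and points $y^{j}_{n}$ with $|y^{j}_{n}|\to\infty$, $|y^{j}_{n}-y^{i}_{n}|\to\infty$ for $i\ne j$, such that in particular $\mathcal{I}(u_{n})\to\mathcal{I}(\bar u)+\sum_{i=1}^{k}\mathcal{I}_{\infty}(u^{i})$, and such that if $k=0$ then $u_{n}\to\bar u$ strongly in $E$. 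Since $\{u_n\}$ is a $(PS)_c$ sequence we have $\mathcal{I}(u_n)\to c$, hence $c=\mathcal{I}(\bar u)+\sum_{i=1}^{k}\mathcal{I}_{\infty}(u^{i})$.

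The two facts that make the count work are: (a) $\mathcal{I}(\bar u)\ge 0$, and (b) $\mathcal{I}_{\infty}(u^{i})\ge m_{\infty}$ for every $i$. For (a), since $\bar u$ is a weak solution of (FSE) it satisfies the Pohozaev identity of Proposition~\ref{p2-1}, so $\bar u\in\mathcal{P}\cup\{0\}$; the identity displayed after Remark~\ref{r3-1}, together with $(V_{2})$, gives $\mathcal{I}(u)=\frac{s}{N}\int_{\mathbb{R}^{N}}|\xi|^{2s}|\hat u|^{2}\,d\xi-\frac{1}{2N}\int_{\mathbb{R}^{N}}\langle\nabla V(x),x\rangle|u|^{2}\,dx\ge 0$ for all $u\in\mathcal{P}$, and recalling that $\mathcal{I}(u)\ge m>0$ on $\mathcal{P}$ we see that $\mathcal{I}(\bar u)\ge 0$ with equality only if $\bar u=0$. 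For (b), each $u^{i}$ is a nontrivial solution of $\text{(FSE)}_{\infty}$, so by the remark following Proposition~\ref{p2-3} we have $\mathcal{I}_{\infty}(u^{i})\ge\mathcal{I}_{\infty}(w)=m_{\infty}>0$.

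Now the arithmetic. If $c\in(0,m_{\infty})$, then $c=\mathcal{I}(\bar u)+\sum_{i=1}^{k}\mathcal{I}_{\infty}(u^{i})\ge k\,m_{\infty}$, which forces $k=0$ (otherwise $c\ge m_{\infty}$, a contradiction); hence $u_{n}\to\bar u$ in $E$ along a subsequence and $\{u_{n}\}$ is relatively compact. If instead $\mathcal{I}(u_{n})\to m_{\infty}$, the same estimate gives $m_{\infty}\ge k\,m_{\infty}$, so $k\le 1$ since $m_{\infty}>0$. When $k=0$ we again get strong convergence, so $\{u_n\}$ is relatively compact. When $k=1$, $m_{\infty}=\mathcal{I}(\bar u)+\mathcal{I}_{\infty}(u^{1})\ge\mathcal{I}(\bar u)+m_{\infty}$ forces $\mathcal{I}(\bar u)=0$, hence $\bar u=0$ by (a), and $\mathcal{I}_{\infty}(u^{1})=m_{\infty}$; thus $u^{1}$ is a least-energy (ground state) solution of $\text{(FSE)}_{\infty}$.

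It remains to identify $u^{1}$ with $w$. Arguing exactly as in Section~2, a critical point of $\mathcal{I}_{\infty}$ must be non-negative (test with its negative part and use \eqref{eq1-8}), and since $u^{1}\not\equiv 0$ the strong maximum principle for $(-\Delta)^{s}$ yields $u^{1}>0$; by the uniqueness up to translation in Proposition~\ref{p2-3}, $u^{1}=w(\cdot-y_{0})$ for some $y_{0}\in\mathbb{R}^{N}$, and replacing $y^{1}_{n}$ by $y^{1}_{n}+y_{0}$ (which still tends to infinity) we may take $u^{1}=w$, which is exactly the assertion of the corollary. The only genuinely delicate points are in this last step --- ensuring the profile $u^{1}$ inherits positivity so that the uniqueness statement of Proposition~\ref{p2-3} applies, and the harmless absorption of the fixed translation into $\{y^{1}_{n}\}$; everything else is the bookkeeping above.
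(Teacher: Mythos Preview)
Your proof is correct and follows essentially the same approach as the paper: apply the splitting of Lemma~\ref{l4-5}, use $\mathcal{I}(\bar u)\ge 0$ and $\mathcal{I}_{\infty}(u^{i})\ge m_{\infty}$, and count bubbles via the energy identity. You are in fact more thorough than the paper, which simply asserts $k=0$ (resp.\ $k=1$ and $u^{1}=w$) without spelling out the nonnegativity of $\mathcal{I}(\bar u)$ or the positivity/uniqueness-up-to-translation argument that pins down $u^{1}$.
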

\begin{proof}[\bf Proof.]
Let $\{u_{n}\}_{n}$ be a $(PS)_{c}$ sequence of functional $\mathcal{I}$.
Applying Lemma \ref{l4-5} we have $\mathcal{I}_{\infty}(u^{j})\geq m_{\infty}$ for all $j\in \mathbb{N}$.

If $\mathcal{I}(u_{n})\to c<m_{\infty}$, Lemma \ref{l4-5} (iii) gives that $k=0$,
and then $u_{n}\to \bar{u}$ strongly in $E$.

If $\mathcal{I}(u_{n})\to c=m_{\infty}$ and $\{u_{n}\}_{n}$
is not compact in $E$, then Lemma \ref{l4-5} (iii) implies that $k=1$ and $u^{1}=w$.
\end{proof}

\section{Nonexistence result}
Now we are ready to prove Theorem \ref{th1-1}.
\begin{proposition}\label{p5-1}
The relation $m=m_{\infty}$ holds and $m$ is not attained.
\end{proposition}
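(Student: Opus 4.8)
The plan is to prove the two inequalities $m \ge m_\infty$ and $m \le m_\infty$ separately; the first argument will actually produce the \emph{strict} inequality $\mathcal{I}(u) > m_\infty$ for every $u \in \mathcal{P}$, which then makes the non-attainment immediate.

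\textbf{Step 1: $m \ge m_\infty$, with strict inequality on $\mathcal{P}$.} Fix $u \in \mathcal{P}$. As recorded right after Remark \ref{r3-1}, using $(V_2)$,
\begin{equation*}
\mathcal{I}(u) \ =\ \frac{s}{N}\int_{\mathbb{R}^N}|\xi|^{2s}|\hat u(\xi)|^2\,d\xi \ -\ \frac{1}{2N}\int_{\mathbb{R}^N}\langle\nabla V(x),x\rangle|u|^2\,dx \ \ge\ \frac{s}{N}\int_{\mathbb{R}^N}|\xi|^{2s}|\hat u(\xi)|^2\,d\xi \ >\ 0 .
\end{equation*}
By \eqref{eq3-28}, $u \in \mathcal{O}$, so Proposition \ref{p3-2}(i) gives $\tilde t_u \in (0,1)$ with $v := u(x/\tilde t_u) \in \mathcal{P}_\infty$; running the same manipulation with $V$ replaced by the constant $V_\infty$ yields $\mathcal{I}_\infty(v) = \frac{s}{N}\int|\xi|^{2s}|\hat v|^2\,d\xi$, while the scaling identities from Section 3 give $\int|\xi|^{2s}|\hat v|^2\,d\xi = \tilde t_u^{\,N-2s}\int|\xi|^{2s}|\hat u|^2\,d\xi$. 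Since $N > 2s$ and $0 < \tilde t_u < 1$,
\begin{equation*}
m_\infty \ \le\ \mathcal{I}_\infty(v) \ =\ \frac{s}{N}\,\tilde t_u^{\,N-2s}\int_{\mathbb{R}^N}|\xi|^{2s}|\hat u|^2\,d\xi \ <\ \frac{s}{N}\int_{\mathbb{R}^N}|\xi|^{2s}|\hat u|^2\,d\xi \ \le\ \mathcal{I}(u) ,
\end{equation*}
hence $m_\infty \le m$, and moreover $\mathcal{I}(u) > m_\infty$ for every $u \in \mathcal{P}$.

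\textbf{Step 2: $m \le m_\infty$.} Let $w$ be the ground state of $\text{(FSE)}_\infty$ from Proposition \ref{p2-3} and set $w_n(x) := w(x - y_n)$ for any $|y_n| \to +\infty$. Since $\mathcal{J}_\infty$, $\mathcal{F}$ and $\mathcal{I}_\infty$ are translation invariant, $w_n \in \mathcal{P}_\infty \subset \mathcal{O}$, so Proposition \ref{p3-1} gives $t_n := t_{w_n} > 0$ with $w_n(x/t_n) \in \mathcal{P}$, hence $\mathcal{I}(w_n(x/t_n)) \ge m$. The key point is to show that $\{t_n\}$ lies in a compact subset of $(0,+\infty)$. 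Writing $z_n = z_n(y) := t_n(y + y_n)$, the identity $\theta'(t_n) = 0$ (cf.\ \eqref{eq3-6}) becomes
\begin{equation*}
\frac{N-2s}{N}\int_{\mathbb{R}^N}|\xi|^{2s}|\hat w|^2\,d\xi \ =\ t_n^{2s}\,I_n, \qquad I_n := \int_{\mathbb{R}^N}\Big(2F(w) - \Big(V(z_n) + \tfrac1N\langle\nabla V(z_n),z_n\rangle\Big)|w|^2\Big)dy .
\end{equation*}
By $(V_3)$ the integrand is $\le 2F(w) - V_\infty|w|^2$, so $I_n \le \mathcal{F}(w)$ and $t_n \to 0$ is impossible (the left side is a fixed positive number); and if $t_n \to +\infty$ then $|z_n(y)| \to +\infty$ for a.e.\ $y$, so — using that $V$ is bounded (continuous with finite limit at infinity) and $\langle\nabla V(z),z\rangle \to 0$ as $|z| \to \infty$ (squeeze $-N(V(z)-V_\infty) \le \langle\nabla V(z),z\rangle \le 0$ from $(V_2)$, $(V_3)$) — dominated convergence gives $I_n \to \mathcal{F}(w) > 0$, forcing $t_n^{2s}I_n \to +\infty$, a contradiction. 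Hence, up to a subsequence, $t_n \to t_* \in (0,+\infty)$.

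\textbf{Step 3: conclusion.} Since $w \in \mathcal{P}_\infty$, the map $t \mapsto \mathcal{I}_\infty(w(x/t))$ has derivative $t^{N-2s-1}\big\{\tfrac{N-2s}{2}\int|\xi|^{2s}|\hat w|^2 - \tfrac{N}{2}t^{2s}\mathcal{F}(w)\big\}$, positive for $t < 1$ and negative for $t > 1$, so $\mathcal{I}_\infty(w(x/t_n)) \le \mathcal{I}_\infty(w) = m_\infty$. Combining this with
\begin{equation*}
\mathcal{I}(w_n(x/t_n)) \ =\ \mathcal{I}_\infty(w(x/t_n)) + \frac{t_n^N}{2}\int_{\mathbb{R}^N}\big(V(z_n) - V_\infty\big)|w|^2\,dy ,
\end{equation*}
where the last integral $\to 0$ (bounded $t_n^N$, $|z_n| \to \infty$ a.e., $V - V_\infty$ bounded and vanishing at infinity), gives $m \le \mathcal{I}(w_n(x/t_n)) \le m_\infty + o(1)$, i.e.\ $m \le m_\infty$; with Step 1, $m = m_\infty$. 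Finally, if $m$ were attained at $u_0 \in \mathcal{P}$, Step 1 would give $m = \mathcal{I}(u_0) > m_\infty = m$, a contradiction, so $m$ is not attained. The hard part is the compactness of the rescalings $\{t_n\}$ in Step 2 — preventing $t_n \to 0$ or $t_n \to +\infty$ as the ground state is carried to infinity — and it is precisely there that the strength of $(V_1)$–$(V_3)$ (namely $V \ge V_\infty$, the decay of $\langle\nabla V(z),z\rangle$, and the boundedness of $V$) is used; the remaining passages to the limit are routine dominated convergence.
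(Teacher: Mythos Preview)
Your proof is correct and follows essentially the same strategy as the paper: project $u\in\mathcal P$ onto $\mathcal P_\infty$ via Proposition~\ref{p3-2}(i) for $m\ge m_\infty$, and use translates $w_n=w(\cdot-y_n)$ projected onto $\mathcal P$ for $m\le m_\infty$. The one small difference is that the paper proves $t_n\to 1$ explicitly (by subtracting the $\mathcal P$ and $\mathcal P_\infty$ identities and passing to the limit), whereas you only show $\{t_n\}$ stays in a compact subset of $(0,\infty)$ and then exploit the fact that $t\mapsto\mathcal I_\infty(w(\cdot/t))$ is maximized at $t=1$; this is a mild streamlining of the same argument, not a different route.
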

\begin{proof}[\bf Proof.]
Let us show that $m\geq m_{\infty}$.

For all $u\in \mathcal{P}$, in view of Proposition \ref{p3-2} (i),
there is a $t_{u}\in (0\,,\,1)$ such that $u(\frac{x}{t_{u}})\in \mathcal{P}_{\infty}$,
so we get
$$
\frac{N-2s}{2}t_{u}^{N-2s}\int\limits_{\mathbb{R}^{N}}|\xi|^{2s}|\hat{u}(\xi)|^{2}d\xi
=\frac{N}{2}t_{u}^{N}\int\limits_{\mathbb{R}^{N}}\Big(2F(u)-V_{\infty}|u|^{2}\Big)dx.
$$
Then we have
\begin{equation}\label{eq3-37**}\aligned
\mathcal{I}_{\infty}\Big(u(\frac{x}{t_{u}})\Big)
&=\frac{t_{u}^{N-2s}}{2}\int\limits_{\mathbb{R}^{N}}|\xi|^{2s}|\hat{u}(\xi)|^{2}d\xi+
\frac{t_{u}^{N}}{2}\int\limits_{\mathbb{R}^{N}}V_{\infty}|u|^{2}dx-
t_{u}^{N}\int\limits_{\mathbb{R}^{N}}F(u)dx\\
&=\Big(\frac{1}{2}-\frac{N-2s}{2N}\Big)t_{u}^{N-2s}\int\limits_{\mathbb{R}^{N}}|\xi|^{2s}|\hat{u}(\xi)|^{2}d\xi\\
&=\frac{s}{N}t_{u}^{N-2s}\int\limits_{\mathbb{R}^{N}}|\xi|^{2s}|\hat{u}(\xi)|^{2}d\xi.\\
\endaligned\end{equation}

By \eqref{eq2-1} and $(V_2)$, we have
\begin{equation}\label{eq3-37}
\aligned
\mathcal{I}(u)
&=\frac{1}{2}\int\limits_{\mathbb{R}^{N}}|\xi|^{2s}|\hat{u}(\xi)|^{2}d\xi+
\frac{1}{2}\int\limits_{\mathbb{R}^{N}}V(x)|u|^{2}dx-
\frac{1}{2}\int\limits_{\mathbb{R}^{N}}F(u)dx\\
&=\frac{s}{N}\int\limits_{\mathbb{R}^{N}}|\xi|^{2s}|\hat{u}(\xi)|^{2}d\xi
-\frac{1}{2N}\int\limits_{\mathbb{R}^{N}}\langle\nabla V(x),x\rangle|u|^{2}dx\\
&\geq\frac{s}{N}\int\limits_{\mathbb{R}^{N}}|\xi|^{2s}|\hat{u}(\xi)|^{2}d\xi.\\
\endaligned
\end{equation}

Taking account of \eqref{eq3-37**}, \eqref{eq3-37} and  $t_{u}\in (0\,,\,1)$,
we infer
$$\mathcal{I}(u)\geq \frac{s}{N}\int\limits_{\mathbb{R}^{N}}|\xi|^{2s}|\hat{u}|^{2}d\xi
\geq \frac{s}{N}t_{u}^{N-2s}\int\limits_{\mathbb{R}^{N}}|\xi|^{2s}|\hat{u}|^{2}d\xi=\mathcal{I}_{\infty}\Big(u(\frac{x}{t_u})\Big)$$
for all $u\in \mathcal{P}$. Then we conclude that $m\geq m_{\infty}$.

Let us now prove the opposite inequality $m\leq m_{\infty}$.
To do this, we consider the sequence $u_{n}=w_{n}(\frac{\cdot}{t_{n}})$,
where $w_{n}(\cdot)=w(\cdot-z_{n})$, and $w$ is the positive ground solution centered at zero of Eq. $\text{(FSE)}_{\infty}$,
$\{z_{n}\}_{n}\subset \mathbb{R}^{N}$ with $|z_{n}|\to +\infty$ as $n\to +\infty$ and $t_{n}:=t_{w_{n}}$.
We want to show that
\begin{equation}\label{eq3-35*}
 \lim\limits_{n\to\infty}\mathcal{I}(u_{n})=m_{\infty}.
\end{equation}

From Proposition \ref{p2-3} we have $w\in \mathcal{P}_{\infty}$ and $\mathcal{I}_{\infty}(w)=m_{\infty}$.
 By the translation invariance of the integrals,
we get $w_{n}\in \mathcal{P}_{\infty}$ and $I_{\infty}(w_{n})=m_{\infty}$, too.
 By Proposition \ref{p3-2} (ii), we already know that $t_{n}=t_{w_{n}}>1$,
 and thus $u_{n}\in \mathcal{P}$.
 Therefore, we have
\begin{equation*}\label{eq3-35}
\aligned
&|\mathcal{I}({u}_{n})-m_{\infty}|\\
&=|\mathcal{I}({u}_{n})-\mathcal{I}_{\infty}(w_{n})|\\
&=\Big|\frac{1}{2}\int\limits_{\mathbb{R}^{N}}|\xi|^{2s}|\hat{u}_{n}(\xi)|^{2}d\xi
+\frac{1}{2}\int\limits_{\mathbb{R}^{N}}V(x)|{u}_{n}|^{2}dx
-\int\limits_{\mathbb{R}^{N}}F({u}_{n})dx\\
&-\frac{1}{2}\int\limits_{\mathbb{R}^{N}}|\xi|^{2s}|\hat{w}_{n}|^{2}d\xi
-\frac{1}{2}\int\limits_{\mathbb{R}^{N}}V_{\infty}|w_{n}|^{2}dx
+\int\limits_{\mathbb{R}^{N}}F(w_{n})dx\Big|\\
&=\Big|\frac{t_{n}^{N-2s}}{2}\int\limits_{\mathbb{R}^{N}}|\xi|^{2s}|\hat{w}_{n}(\xi)|^{2}d\xi
+\frac{t^{N}_{n}}{2}\int\limits_{\mathbb{R}^{N}}V(t_{n}x)|w_{n}|^{2}dx
-t_{n}^{N}\int\limits_{\mathbb{R}^{N}}F(w_{n})dx\\
&-\frac{1}{2}\int\limits_{\mathbb{R}^{N}}|\xi|^{2s}|\hat{w}_{n}(\xi)|^{2}d\xi
-\frac{1}{2}\int\limits_{\mathbb{R}^{N}}V_{\infty}|w_{n}|^{2}dx
+\int\limits_{\mathbb{R}^{N}}F(w_{n})dx\Big|\\
&=\Big|\frac{t_{n}^{N-2s}-1}{2}\int\limits_{\mathbb{R}^{N}}|\xi|^{2s}|\hat{w}_{n}(\xi)|^{2}d\xi
-(t_{n}^{N}-1)\int\limits_{\mathbb{R}^{N}}F(w_{n})dx\\
&+\frac{t^{N}_{n}}{2}\int\limits_{\mathbb{R}^{N}}V(t_{n}x)|w_{n}|^{2}dx
-\frac{1}{2}\int\limits_{\mathbb{R}^{N}}V_{\infty}|w_{n}|^{2}dx\Big|\\
&=\Big|\frac{t_{n}^{N-2s}-1}{2}\int\limits_{\mathbb{R}^{N}}|\xi|^{2s}|\hat{w}(\xi)|^{2}d\xi
-(t_{n}^{N}-1)\int\limits_{\mathbb{R}^{N}}F(w)dx\\
&+\frac{t^{N}_{n}}{2}\int\limits_{\mathbb{R}^{N}}V(t_{n}x+z_{n})|w|^{2}dx
-\frac{1}{2}\int\limits_{\mathbb{R}^{N}}V_{\infty}|w|^{2}dx\Big|\\
&\leq \frac{|t_{n}^{N-2s}-1|}{2}\int\limits_{\mathbb{R}^{N}}|\xi|^{2s}|\hat{w}(\xi)|^{2}d\xi
+|t_{n}^{N}-1|\int\limits_{\mathbb{R}^{N}}F(w)dx\\
&+\frac{1}{2}\int\limits_{\mathbb{R}^{N}}|t^{N}_{n}V(t_{n}x+z_{n})-V_{\infty}|\cdot|w|^{2}dx.\\
\endaligned
\end{equation*}
In order to obtain \eqref{eq3-35*}, we just need to show that $t_{n}\to 1$ as $n\to \infty$.

By Proposition \ref{p3-2} (ii), we know that $t_{n}>1$.
First, we claim $\lim_{n\to \infty}t_{n}<+\infty$.
If not, there exists a subsequence $\{t_{n}\}_{n}\subset \mathbb{R}^{N}$
such that $t_{n}\to +\infty$ as $n\to \infty$. Then
\begin{equation*}\label{eq3-31}
\aligned
&\mathcal{I}(u_{n})
=\mathcal{I}(w(\frac{x-z_{n}}{t_n}))\\
&=\frac{t_{n}^{N-2s}}{2}\int\limits_{\mathbb{R}^{N}}|\xi|^{2s}|\hat{w}(\xi)|^{2}d\xi
+\frac{1}{2}\int\limits_{\mathbb{R}^{N}}V(x)|w(\frac{x-z_{n}}{t_{n}})|^{2}dx
-\int\limits_{\mathbb{R}^{N}}F(w(\frac{x-z_{n}}{t_{n}}))dx\\
&=\frac{t_{n}^{N-2s}}{2}\int\limits_{\mathbb{R}^{N}}|\xi|^{2s}|\hat{w}(\xi)|^{2}d\xi
+\frac{t_{n}^{N}}{2}\int\limits_{\mathbb{R}^{N}}V(t_{n}x+z_{n})|w|^{2}dx
-t_{n}^{N}\int\limits_{\mathbb{R}^{N}}F(w)dx\\
&=\frac{t_{n}^{N-2s}}{2}\int\limits_{\mathbb{R}^{N}}|\xi|^{2s}|\hat{w}(\xi)|^{2}d\xi
-\frac{t_{n}^{N}}{2}\int\limits_{\mathbb{R}^{N}}\Big(2F(w)-V(t_{n}x+z_{n})|w|^{2}\Big)dx\\
&=\frac{t_{n}^{N-2s}}{2}\int\limits_{\mathbb{R}^{N}}|\xi|^{2s}|\hat{w}(\xi)|^{2}d\xi
-\frac{t_{n}^{N}}{2}\int\limits_{\mathbb{R}^{N}}\Big(2F(w)-V_{\infty}|w|^{2}\Big)dx+o(1)\\
&=\frac{t_{n}^{N-2s}}{2}\int\limits_{\mathbb{R}^{N}}|\xi|^{2s}|\hat{w}(\xi)|^{2}d\xi
-\frac{t_{n}^{N}}{2}\mathcal{F}(w)+o(1).\\
\endaligned
\end{equation*}
So, we have from $\mathcal{F}(w)>0$ that $\lim_{n\to \infty}\mathcal{I}(u_{n})\to -\infty$,
which is contradict to $m\geq m_{\infty}$. The proof of the claim is completed.

Now let us  prove that $\lim_{n\to \infty}t_{n}=1$.
Indeed, since $w\in \mathcal{P}_{\infty}$ and $u_{n}\in \mathcal{P}$, we have
\begin{equation}\label{eq3-31*}
\aligned
(t_{n}^{2s}-1)\int\limits_{\mathbb{R}^{N}}F(w)dx
&=\frac{t_{n}^{2s}}{2}\int\limits_{\mathbb{R}^{N}}\frac{\langle \nabla V(t_{n}x+z_{n}),t_{n}x+z_{n}\rangle}{N}\,|w|^{2}dx\\
&+\frac{t_{n}^{2s}}{2}\int\limits_{\mathbb{R}^{N}}V(t_{n}x+z_{n})|w|^{2}dx
-\frac{1}{2}\int\limits_{\mathbb{R}^{N}}V_{\infty}|w|^{2}dx.\\
\endaligned
\end{equation}

Assume $t_{n}\to t_{0}$ as $n\to \infty$. By $(V_1)$, $(V_2)$, $(V_3)$ and
the Lebesgue Dominated Convergence Theorem, we have
\begin{equation}\label{eq3-31**}
(t_{0}^{2s}-1)\int\limits_{\mathbb{R}^{N}}F(w)dx= \frac{1}{2}\int\limits_{\mathbb{R}^{N}}(t_{0}^{2s}-1)V_{\infty}|w|^{2}dx+o(1).
\end{equation}

If $t_{0}\neq 1$, it follows  from \eqref{eq3-31**} and $w\in \mathcal{P}_{\infty}$ that
$$
\frac{N-2s}{2N}\int\limits_{\mathbb{R}^{N}}|\xi|^{2s}|\hat{u}(\xi)|^{2}d\xi=\int\limits_{\mathbb{R}^{N}}F(w)dx
-\frac{1}{2}\int\limits_{\mathbb{R}^{N}} V_{\infty}|w|^{2}dx=0,
$$
which  is contradict to $u\neq 0$. Now we have $t_{n}\to 1$ as $n\to \infty$,
that is, we get that $m=m_{\infty}$.

\vspace{3mm}
Finally, assume, by contradiction, that there exists $u_{0}\in \mathcal{P}$ such that $\mathcal{I}(u_{0})=m=m_{\infty}$.
By Proposition \ref{p3-2}, there is a $t_{u_0}\in (0\,,\,1)$ such that $u_{0}(\frac{x}{t_{u_0}})\in \mathcal{P}_{\infty}$.
Then, by  $(V_2)$ and \eqref{eq3-37}, for  $u_{0}(\frac{x}{t_{u_0}})\in \mathcal{P}_{\infty}$, we have
\begin{equation*}
\aligned
m_{\infty}
&\leq \mathcal{I}_{\infty}(u_{0}(\frac{x}{t_{u_0}}))=\frac{s}{N}t_{u_0}^{N-2s}\int\limits_{\mathbb{R}^{N}}|\xi|^{2s}|\hat{u_0}(\xi)|^{2}d\xi\\
&<\frac{s}{N}\int\limits_{\mathbb{R}^{N}}|\xi|^{2s}|\hat{u_0}(\xi)|^{2}d\xi\\
&\leq \mathcal{I}(u_{0})=m=m_{\infty}.
\endaligned
\end{equation*}
Therefore $m$ is not attained. This completes the proof.
\end{proof}

\begin{proof}[\bf {Proof of Theorem \ref{th1-1}}]
Theorem \ref{th1-1} is obtained just by the result in Proposition \ref{p5-1}.
\end{proof}

\section{Existence of a positive solution}
By Proposition \ref{p5-1}, we can only hope to find critical points of $\mathcal{I}$ at levels higher than $m_{\infty}$.
Next lemma provides a range of values greater than $m_{\infty}$ such that the (PS) property holds.

\begin{lemma}\label{l4-7}
The functional $\mathcal{I}$ satisfies the $(\text{PS})_d$ condition for all  $d\in (m_{\infty}\,,\,2\,m_{\infty})$.
\end{lemma}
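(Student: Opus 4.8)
The plan is to show that any $(\text{PS})_d$ sequence $\{u_n\}$ with $d\in(m_\infty,2m_\infty)$ is relatively compact, by invoking the decomposition in Lemma~\ref{l4-5} and ruling out the ``splitting'' alternative $k\geq 1$. First I would recall from the proof of Lemma~\ref{l4-5} that $\{u_n\}$ is bounded in $E$, so the representation holds: there are $\bar u$ solving (FSE), an integer $k\geq 0$, nontrivial solutions $w_1,\dots,w_k$ of $(\text{FSE})_\infty$ and points $y_n^j$ with $|y_n^j|\to\infty$, $|y_n^j-y_n^i|\to\infty$ for $i\neq j$, such that $u_n-\sum_{i=1}^k w_i(\cdot-y_n^i)\to\bar u$ in $E$ and, crucially, $d=\mathcal{I}(u_n)\to \mathcal{I}(\bar u)+\sum_{i=1}^k \mathcal{I}_\infty(w_i)$.

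The heart of the argument is a lower bound on each term. Since every $w_i$ is a nontrivial weak solution of $(\text{FSE})_\infty$, it lies on $\mathcal{P}_\infty$ (by Proposition~\ref{p2-1} applied to the autonomous problem), so $\mathcal{I}_\infty(w_i)\geq m_\infty$ for each $i$; here one uses that $m_\infty=\inf_{\mathcal{P}_\infty}\mathcal{I}_\infty$. For the term $\mathcal{I}(\bar u)$ I would argue that it is nonnegative: if $\bar u\equiv 0$ this is trivial, and if $\bar u\neq 0$ then $\bar u$ is a nontrivial solution of (FSE), hence $\bar u\in\mathcal{P}$, and the computation already carried out in Section~3 (the chain ending in $\mathcal{I}(u)\geq \frac{s}{N}\int_{\mathbb{R}^N}|\xi|^{2s}|\hat u(\xi)|^2\,d\xi>0$ for $u\in\mathcal{P}$, using $(V_2)$) gives $\mathcal{I}(\bar u)\geq 0$, indeed $\mathcal{I}(\bar u)>0$. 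Combining, $d\geq \mathcal{I}(\bar u)+k\,m_\infty$. If $k\geq 2$ this forces $d\geq 2m_\infty$, contradicting $d<2m_\infty$. Hence $k\in\{0,1\}$.

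To exclude $k=1$, suppose $k=1$. Then $d=\mathcal{I}(\bar u)+\mathcal{I}_\infty(w_1)\geq \mathcal{I}(\bar u)+m_\infty$, so $\mathcal{I}(\bar u)\leq d-m_\infty<m_\infty$ (using $d<2m_\infty$). If $\bar u\neq 0$, then $\bar u\in\mathcal{P}$ and, via Proposition~\ref{p3-2}(i), there is $t_{\bar u}\in(0,1)$ with $\bar u(\cdot/t_{\bar u})\in\mathcal{P}_\infty$; the identity \eqref{eq3-37**}--\eqref{eq3-37} then yields $m_\infty\leq \mathcal{I}_\infty(\bar u(\cdot/t_{\bar u}))=\frac{s}{N}t_{\bar u}^{N-2s}\int|\xi|^{2s}|\hat{\bar u}|^2\,d\xi< \frac{s}{N}\int|\xi|^{2s}|\hat{\bar u}|^2\,d\xi\leq \mathcal{I}(\bar u)<m_\infty$, a contradiction; so $\bar u=0$. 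But then $d=\mathcal{I}_\infty(w_1)$ and $u_n-w_1(\cdot-y_n^1)\to 0$ in $E$; I still need a contradiction. The point is that $u_n\in\mathcal{P}$ forces a Pohozaev-type constraint that $w_1(\cdot-y_n^1)$ cannot satisfy in the limit: from $\mathcal{J}(u_n)=0$, passing to the limit along $u_n\sim w_1(\cdot-y_n^1)$ and using $(V_1)$ (so $V(x)\to V_\infty$, $\langle\nabla V(x),x\rangle\to 0$ as $|x|\to\infty$ while $|y_n^1|\to\infty$) yields $\mathcal{J}_\infty(w_1)=\lim \mathcal{J}(u_n)=0$ modulo lower-order terms; but more precisely, comparing $\mathcal{J}(u_n)=0$ with the strict inequality $(V_3)$ (which is strict on a positive-measure set, by $(V_2)$) shows that a genuinely concentrating-at-infinity profile is incompatible with membership in $\mathcal{P}$ unless the $w_1$-mass is zero — contradicting $w_1\neq 0$. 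Hence $k=0$, so $u_n\to\bar u$ strongly in $E$ and the $(\text{PS})_d$ condition holds.

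The main obstacle I anticipate is precisely the last step: carefully transferring the constraint $u_n\in\mathcal{P}$ (which involves the non-autonomous terms $\langle\nabla V(x),x\rangle$ and $V(x)$) to a statement about the translated-to-infinity profile $w_1$, and extracting a contradiction with $(V_2)$--$(V_3)$. One must track where the ``mass'' of $u_n$ sits: the part near the origin is $\bar u=0$, and the part near $y_n^1$ feels only $V_\infty$ in the limit; but then $u_n\in\mathcal{P}$ together with the strict form of $(V_3)$ on a positive-measure set (which only ``sees'' the origin-localized part) leaves no room, forcing $w_1\equiv 0$. Making this dichotomy rigorous — perhaps by testing $\mathcal{J}(u_n)=0$ against the splitting and using the Brezis--Lieb-type identities from the proof of Lemma~\ref{l4-5} — is the delicate point; everything else is bookkeeping with the estimates already established.
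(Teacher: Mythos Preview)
Your handling of the cases $k\ge 2$ and $k=1$ with $\bar u\neq 0$ matches the paper: you correctly use $\mathcal{I}_\infty(w_i)\ge m_\infty$ and, for nontrivial $\bar u\in\mathcal{P}$, the projection argument (equivalently $m=m_\infty$ from Proposition~\ref{p5-1}) to get $\mathcal{I}(\bar u)\ge m_\infty$ and hence $d\ge 2m_\infty$.

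The genuine gap is in the case $k=1$, $\bar u=0$. Your plan is to extract a contradiction from $\mathcal{J}(u_n)=0$ together with the strict part of $(V_2)$--$(V_3)$. This cannot work. Since $\bar u=0$, the sequence $u_n$ vanishes on every compact set, so the contribution of the region where $(V_3)$ is strict (a fixed subset of $\mathbb{R}^N$) to any integral involving $|u_n|^2$ tends to zero. On the other hand, near $y_n^1$ (where all the mass sits) one has $V(\cdot)\to V_\infty$ and $\langle\nabla V(\cdot),\cdot\rangle\to 0$, so $\mathcal{J}(u_n)\to\mathcal{J}_\infty(w_1)$. But $w_1$ is a nontrivial solution of $(\text{FSE})_\infty$, hence $w_1\in\mathcal{P}_\infty$ and $\mathcal{J}_\infty(w_1)=0$. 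The limit identity is therefore $0=0$, not a contradiction; the ``strictness'' you try to invoke is multiplied by $o(1)$ and disappears. You yourself flag this step as the delicate point, and indeed the obstacle is fatal for this route.

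The paper closes this case by a completely different mechanism that does not use $u_n\in\mathcal{P}$ at all: it appeals to a dichotomy in the critical values of $\mathcal{I}_\infty$, namely that for any nontrivial solution $u^1$ of $(\text{FSE})_\infty$ one has either $\mathcal{I}_\infty(u^1)=m_\infty$ or $\mathcal{I}_\infty(u^1)\ge 2m_\infty$. Since $d=\mathcal{I}_\infty(u^1)\in(m_\infty,2m_\infty)$, both alternatives are excluded. This ``energy gap'' for the autonomous limit problem (resting on the uniqueness statement in Proposition~\ref{p2-3} and the fact that, with $f(t)=0$ for $t\le 0$, every solution of $(\text{FSE})_\infty$ is nonnegative) is the missing ingredient in your argument; replacing your Pohozaev-constraint step by this dichotomy completes the proof.
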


\begin{proof}[\bf Proof.]
Let us consider a $(PS)_{d}$ sequence $\{u_{n}\}_{n}\subset \HR$.
 By Lemma \ref{l4-5}, up to a subsequence, we have
\begin{equation}\label{eq4-22}
d=\lim\limits_{n\to +\infty}\mathcal{I}(u_{n})=\mathcal{I}(\bar{u})+\sum\limits_{i=1}^{k}\mathcal{I}_{\infty}(u^{i}),
\end{equation}
where $\bar{u}$ is the weak limit of $\{u_{n}\}_{n}$,
 and $u^{i}$ is a weak solution of Eq. $\text{(FSE)}_{\infty}$ and $\mathcal{I}_{\infty}(u^i)\geq m_{\infty}$.

Thus, being $m_{\infty}<d<2\,m_{\infty}$, \eqref{eq4-22} implies $k<2$.
If $k=1$, there are two possibilities:
\begin{itemize}
\item[(i)]$\bar{u}\neq0$, and then $\mathcal{I}(\bar{u})\geq m_{\infty}$ implies that
$$2m_{\infty}\leq\mathcal{I}(\bar{u})+\mathcal{I}_{\infty}(u^1)=\lim\limits_{n\to +\infty}\mathcal{I}(u_{n})=d<2m_{\infty};$$
\item[(ii)] $\bar{u}=0$, and then $\mathcal{I}(\bar{u})=0$ and
$$d=\lim\limits_{n\to +\infty}\mathcal{I}(u_{n})=\mathcal{I}_{\infty}(u^1)\in (m_{\infty}\,,\,2\,m_{\infty}).$$
This is impossible because either $\mathcal{I}_{\infty}(u^1)=m_{\infty}$, or $\mathcal{I}_{\infty}(u^1)\geq 2m_{\infty}$.
\end{itemize}
Since both cases brings to a contradiction, we conclude that $k=0$ and
 $u_{n}\to \bar{u}$ as $n\to\infty$, that is, the functional $\mathcal{I}$ satisfies the $(PS)_{d}$ condition.
\end{proof}

Now we need to build a suitable min-max scheme for problem (FSE).
To do this, we first remind the definition of the barycenter of a function $u\in H^{s}(\mathbb{R}^{N})$, $u\neq 0$
given in \cite{gd2003}.
Set
$$\mu(u)(x)=\frac{1}{|B_1(0)|}\int\limits_{B_{1}(x)}|u(y)|dy,$$
$$\tilde{u}(x)=\Big[\mu(u)(x)-\frac{1}{2}\max\mu(u)(x)\Big]^{+}.$$
It follows that $\mu$ is a continuous function and
$\mu(u)\in L^{\infty}(\mathbb{R}^{N})$ and $\tilde{u}\in C(\mathbb{R}^{N})$.

Now define the barycenter of $u$ by
$$\beta(u)=\frac{1}{\|\tilde{u}\|_{L^{1}(\mathbb{R}^{N})}}\int\limits_{\mathbb{R}^{N}}x\cdot\tilde{u}(x)\,dx.$$

Since $\tilde{u}$ has compact support, $\beta(u)$ is well-defined.
Moreover the  following properties hold:
\begin{itemize}
\item[(1)] $\beta$ is a continuous function in $H^{s}(\mathbb{R}^{N})\setminus\{0\}$;
\item[(2)] If $u$ is a radial function, then $\beta(u)=0$;
\item[(3)] For all $t\neq 0$ and $u\in H^{s}(\mathbb{R}^{N})\setminus\{0\}$,  $\beta(tu)=\beta(u)$;
\item[(4)] For given $z\in \mathbb{R}^{N}$, define $u_{z}(x):=u(x-z)$, and then $\beta(u_{z})=\beta(u)+z$.
\end{itemize}

Let us define
$$b:=\inf\{\mathcal{I}(u):\,u\in \mathcal{P},\,\beta(u)=0\}.
$$

It is clear that $b\geq m$. Moreover, the following result is true.
\begin{lemma}\label{l4-8*}
$b>m$.
\end{lemma}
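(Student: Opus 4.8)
The plan is to argue by contradiction. Suppose $b=m$. Since $m=m_\infty$ by Proposition~\ref{p5-1}, this would force the existence of a minimizing sequence $\{u_n\}_n\subset\mathcal{P}$ with $\beta(u_n)=0$ and $\mathcal{I}(u_n)\to m=m_\infty$. First I would note that $\{u_n\}_n$ is bounded in $E$ (this was already established at the start of the proof of Lemma~\ref{l4-5}, using $u_n\in\mathcal{P}$, $(V_2)$, $(V_3)$ and \eqref{eq1-2}). Then, using the Ekeland variational principle on the $C^1$-manifold $\mathcal{P}$, one may assume in addition that $\{u_n\}_n$ is a $(\mathrm{PS})_{m_\infty}$ sequence for $\mathcal{I}$ constrained on $\mathcal{P}$, hence (by the argument in Lemma~\ref{l4-5}, equations \eqref{eq4-3}--\eqref{eq4-4}) a free $(\mathrm{PS})_{m_\infty}$ sequence.

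Next I would invoke Corollary~\ref{co4-1}: a $(\mathrm{PS})_{m_\infty}$ sequence is either relatively compact in $E$, or the conclusion of Lemma~\ref{l4-5} holds with $k=1$ and $u^1=w$, the ground state of $(\mathrm{FSE})_\infty$. In the first case, up to a subsequence $u_n\to\bar u$ strongly in $E$, so $\bar u\in\mathcal{P}$, $\mathcal{I}(\bar u)=m$, and $\bar u$ achieves $m$ — contradicting Proposition~\ref{p5-1}, which says $m$ is not attained. So we must be in the second case: there is a sequence $\{y_n^1\}_n\subset\mathbb{R}^N$ with $|y_n^1|\to+\infty$ such that $u_n=w(\cdot-y_n^1)+\bar u+o(1)$ in $E$, with $\bar u$ a (necessarily trivial, by the level count $\mathcal{I}(\bar u)+\mathcal{I}_\infty(w)=m_\infty$ and $\mathcal{I}(\bar u)\ge 0$) weak solution; thus $\bar u=0$ and $u_n=w(\cdot-y_n^1)+o(1)$ in $E$.

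The final and main step is to derive a contradiction with the barycenter constraint $\beta(u_n)=0$. Since $w$ is radial, $\beta(w)=0$ by property~(2), and by property~(4) $\beta(w(\cdot-y_n^1))=y_n^1$. Using the continuity of $\beta$ on $H^s(\mathbb{R}^N)\setminus\{0\}$ (property~(1)) together with $u_n-w(\cdot-y_n^1)\to 0$ in $E\hookrightarrow H^s(\mathbb{R}^N)$, one gets $|\beta(u_n)-y_n^1|=|\beta(u_n)-\beta(w(\cdot-y_n^1))|\to 0$. Here one has to be slightly careful: $\beta$ is continuous but need not be uniformly continuous, so the cleanest route is to translate everything back — set $v_n(x):=u_n(x+y_n^1)$, so that $v_n\to w$ strongly in $E$, hence $\beta(v_n)\to\beta(w)=0$, while by property~(4), $\beta(v_n)=\beta(u_n)-y_n^1=-y_n^1$. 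Therefore $y_n^1\to 0$, contradicting $|y_n^1|\to+\infty$. This rules out the second alternative as well, so the assumption $b=m$ is impossible, and since $b\ge m$ always holds we conclude $b>m$.

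The main obstacle is the middle paragraph: ensuring that a minimizing sequence for $b$ can be upgraded to an actual $(\mathrm{PS})_{m_\infty}$ sequence so that the concentration-compactness decomposition of Corollary~\ref{co4-1} applies, and then correctly accounting for the energy levels to conclude $\bar u=0$ and exactly one bubble $w$. The barycenter bookkeeping in the last paragraph is then routine given properties~(1)--(4).
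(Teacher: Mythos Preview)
Your proof is correct and follows essentially the same route as the paper's: assume $b=m=m_\infty$, use Ekeland on $\mathcal{P}$ to upgrade the minimizing sequence to a (PS)$_{m_\infty}$ sequence, invoke Corollary~\ref{co4-1} together with Proposition~\ref{p5-1} to force $u_n=w(\cdot-y_n^1)+o(1)$ with $|y_n^1|\to\infty$, then translate and use continuity of $\beta$ plus $\beta(w)=0$ to contradict $\beta(u_n)=0$. The only nuance is that the paper, after Ekeland produces a nearby sequence $\{w_n\}$ with $\mathcal{I}'|_{\mathcal{P}}(w_n)\to 0$, uses boundedness of $\mathcal{I}''$ and the mean value theorem to transfer $\mathcal{I}'\to 0$ back to the \emph{original} $u_n$ (so that $\beta(u_n)=0$ is preserved exactly), whereas your ``one may assume'' phrasing implicitly replaces $u_n$; this is harmless since your final argument only needs $\beta(u_n)\to 0$, which follows from $\|w_n-u_n\|_E\to 0$ and continuity of $\beta$.
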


\begin{proof}[\bf Proof.]
Clearly $b\geq m$. Suppose by contradiction that $b=m$.
Then there exists $\{u_{n}\}_{n}$ such that $u_{n}\in \mathcal{P}$, $\beta(u_{n})=0$ and
\begin{equation}\label{eq4-26*}
\mathcal{I}(u_{n})\to b=m=m_{\infty}\,\,\, \text{as}\,\,\,n\to\infty.
\end{equation}

By the Ekeland's variational principle, there exists a sequence
$\{w_{n}\}_{n}\subset \mathcal{P}$ such that
 $\mathcal{I}(w_{n})\to m_{\infty}$,
 $\mathcal{I}'|_{\mathcal{P}}(w_{n})\to 0$ and $\|w_{n}-u_{n}\|_{E}\to 0$.
Since $\mathcal{P}$ is a natural constraint of functional $\mathcal{I}$,
by an analogous argument in Lemma \ref{l4-5}
we deduce that $\mathcal{I}'(w_{n})\to 0$ and $\{w_{n}\}_{n}$ is bounded in $E$.
Moreover, $\{u_{n}\}$ is bounded. Since it is easily seen that
$\mathcal{I}''$ maps bounded set into bounded set,
the mean value theorem allows us to conclude that
\begin{equation}\label{eq4-26**}
\mathcal{I}'(u_{n})\to 0\quad \text{ as} \,\,\,n\to \infty.
\end{equation}

Taking account of \eqref{eq4-26*} and \eqref{eq4-26**},
Corollary \ref{co4-1} and Proposition \ref{p5-1} tell us that
$$u_{n}=w(x-z_{n})+o(1),$$
where $|z_{n}|\to +\infty$ as $n\to\infty$ for $z_{n}\in \mathbb{R}^{N}$,
and $w$ is the positive radially symmetric
solution of Eq. $\text{(FSE)}_{\infty}$.

By a translation, we obtain
\begin{equation*}\label{eq4-26}
u_{n}(x+z_{n})=w(x)+o(1).
\end{equation*}

Now computing the barycenter of both terms, we get
\begin{equation*}\label{eq4-27}
\beta(w(x)+o(1))=\beta(u_{n}(x+z_{n}))=\beta(u_{n})-z_{n}=-z_{n},
\end{equation*}
and
\begin{equation*}\label{eq4-28}
\beta(w(x)+o(1))\to \beta(w(x))=0\,\,\,(\text{since}\,\,\,w\,\,\,\text{is a radial function}).
\end{equation*}
That is a contradiction since $|z_{n}|\to +\infty$ as $n\to \infty$.
 Therefore, we must have $b>m$.
\end{proof}

Now let us  define the operator
$\Gamma:\,\mathbb{R}^{N}\to \mathcal{P}$ as
$$\Gamma[z](x)=w(\frac{x-z}{t_{z}}),\quad t_{z}>0,$$
where $w$ is the positive, radially symmetric and ground state solution  of Eq. $\text{(FSE)}_{\infty}$,
and $t_{z}$ is chosen such that $w(\cdot-z)$ projects  onto the Pohozaev manifold $\mathcal{P}$.
Moreover, we have
\begin{equation}\label{eq4-30*}
\aligned
\mu\Big(w(\frac{\cdot-z}{t_z})\Big)(x)
&=\frac{1}{|B_{1}(0)|}\int\limits_{B_{1}(x)}|w(\frac{y-z}{t_z})|dy\\
&=\frac{1}{|B_{1}(0)|}\int\limits_{B_{1}(x-z)}|w(\frac{y}{t_z})|dy
=\mu\Big(w(\frac{\cdot}{t_z})\Big)(x-z),\\
\endaligned\end{equation}
and
\begin{equation}\label{eq4-30**}
\aligned
\tilde{w}(\frac{\cdot-z}{t_{z}})(x)
&=\Big[\mu(w)\Big(w(\frac{\cdot-z}{t_z})\Big)(x)-\frac{1}{2}\max\mu\Big(w(\frac{\cdot-z}{t_z})\Big)(x)\Big]^{+}\\
&=\Big[\mu\Big(w(\frac{\cdot}{t_z})\Big)(x-z)-\frac{1}{2}\max\mu\Big(w(\frac{\cdot}{t_z})\Big)(x-z)\Big]^{+}
=\tilde{w}(\frac{\cdot}{t_{z}})(x-z).\\
\endaligned
\end{equation}

By using the properties of the barycenter and \eqref{eq4-30*}, \eqref{eq4-30**},
 we get
\begin{equation}\label{eq4-30}
\aligned
\beta(\Gamma[z])
&=\beta(w(\frac{x-z}{t_z}))
=\frac{1}{\|\tilde{w}(\frac{x-z}{t_z})\|_{L^1}}\int\limits_{\mathbb{R}^{N}}x\cdot \tilde{w}(\frac{x-z}{t_z})dx\\
&=\frac{1}{\|\tilde{w}(\frac{x-z}{t_z})\|_{L^1}}\int\limits_{\mathbb{R}^{N}}x\cdot \tilde{w}(\frac{\cdot}{t_{z}})(x-z)dx\\
&=\frac{1}{\|\tilde{w}(\frac{x}{t_z})\|_{L^1}}\int\limits_{\mathbb{R}^{N}}(x+z)\cdot \tilde{w}(\frac{\cdot}{t_{z}})(x)dx\\
&=\frac{1}{\|\tilde{w}(\frac{x}{t_z})\|_{L^1}}\int\limits_{\mathbb{R}^{N}}x\cdot \tilde{w}(\frac{\cdot}{t_{z}})(x)dx
+\frac{1}{\|\tilde{w}(\frac{x}{t_z})\|_{L^1}}\int\limits_{\mathbb{R}^{N}}z\cdot \tilde{w}(\frac{\cdot}{t_{z}})(x)dx\\
&=\beta(w(\frac{\cdot}{t_{z}}))+\frac{z}{|\tilde{w}|_{L^1}}\int\limits_{\mathbb{R}^{N}}\tilde{w}(\frac{x+z}{t_z})dx\\
&=0+z=z.\\
\endaligned
\end{equation}

Moreover, we obtain

\begin{lemma}\label{l4-10}
$\lim_{|z|\to +\infty}\mathcal{I}(\Gamma[z])\to m_{\infty}$.
\end{lemma}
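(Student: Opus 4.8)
The plan is to recognise that $\Gamma[z]$ is exactly the family of test functions already analysed in the proof of Proposition~\ref{p5-1} (there written as $u_n=w_n(\cdot/t_n)$ with $w_n=w(\cdot-z_n)$ and $|z_n|\to\infty$), so the statement amounts to rereading the estimate obtained there as a genuine limit over all $z$ with $|z|$ large. First I would recall that by translation invariance $w(\cdot-z)\in\mathcal{P}_\infty$ with $\mathcal{I}_\infty(w(\cdot-z))=m_\infty$, and that Proposition~\ref{p3-2}(ii) yields a unique $t_z>1$ with $\Gamma[z]=w(\frac{\cdot-z}{t_z})\in\mathcal{P}$, so $\Gamma[z]$ is well defined. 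Changing variables exactly as in Proposition~\ref{p5-1} then gives
\begin{equation*}
\aligned
|\mathcal{I}(\Gamma[z])-m_\infty|
&=|\mathcal{I}(\Gamma[z])-\mathcal{I}_\infty(w(\cdot-z))|\\
&\leq\frac{|t_z^{N-2s}-1|}{2}\int_{\mathbb{R}^N}|\xi|^{2s}|\hat{w}(\xi)|^2\,d\xi
+|t_z^N-1|\int_{\mathbb{R}^N}F(w)\,dx\\
&\quad+\frac12\int_{\mathbb{R}^N}\big|t_z^N V(t_zx+z)-V_\infty\big|\,|w|^2\,dx,
\endaligned
\end{equation*}
so it suffices to prove that $t_z\to1$ as $|z|\to\infty$ and that the last integral tends to $0$.

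To control $t_z$ I would argue along sequences: given $z_n$ with $|z_n|\to\infty$, I first rule out $t_{z_n}\to+\infty$. Expanding $\mathcal{I}(\Gamma[z_n])$ as in Proposition~\ref{p5-1} and using $(V_1)$--$(V_3)$ together with the Lebesgue Dominated Convergence Theorem (so that $V(t_{z_n}x+z_n)\to V_\infty$ and $\langle\nabla V(t_{z_n}x+z_n),t_{z_n}x+z_n\rangle\to0$ pointwise in $x$) one obtains
\begin{equation*}
\mathcal{I}(\Gamma[z_n])=\frac{t_{z_n}^{N-2s}}{2}\int_{\mathbb{R}^N}|\xi|^{2s}|\hat{w}(\xi)|^2\,d\xi-\frac{t_{z_n}^N}{2}\mathcal{F}(w)+o(1),
\end{equation*}
and since $\mathcal{F}(w)=\frac{N-2s}{N}\int_{\mathbb{R}^N}|\xi|^{2s}|\hat{w}(\xi)|^2\,d\xi>0$ this would force $\mathcal{I}(\Gamma[z_n])\to-\infty$, contradicting $\mathcal{I}\geq m=m_\infty>0$ on $\mathcal{P}$. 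Hence, passing to a subsequence, $t_{z_n}\to t_0\in[1,+\infty)$.

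Now, using $w\in\mathcal{P}_\infty$ and $\Gamma[z_n]\in\mathcal{P}$, the two Pohozaev identities yield
\begin{equation*}
\aligned
(t_{z_n}^{2s}-1)\int_{\mathbb{R}^N}F(w)\,dx
&=\frac{t_{z_n}^{2s}}{2}\int_{\mathbb{R}^N}\frac{\langle\nabla V(t_{z_n}x+z_n),t_{z_n}x+z_n\rangle}{N}|w|^2\,dx\\
&\quad+\frac{t_{z_n}^{2s}}{2}\int_{\mathbb{R}^N}V(t_{z_n}x+z_n)|w|^2\,dx-\frac12\int_{\mathbb{R}^N}V_\infty|w|^2\,dx,
\endaligned
\end{equation*}
and passing to the limit with dominated convergence gives $(t_0^{2s}-1)\int_{\mathbb{R}^N}F(w)\,dx=\frac12(t_0^{2s}-1)\int_{\mathbb{R}^N}V_\infty|w|^2\,dx$. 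If $t_0\neq1$ this forces $\int_{\mathbb{R}^N}F(w)\,dx=\frac12\int_{\mathbb{R}^N}V_\infty|w|^2\,dx$, i.e.\ $\frac{N-2s}{2N}\int_{\mathbb{R}^N}|\xi|^{2s}|\hat{w}(\xi)|^2\,d\xi=0$ because $w\in\mathcal{P}_\infty$, contradicting $w\neq0$. Thus $t_0=1$, and since every subsequence admits a further subsequence converging to $1$, we conclude $t_z\to1$ as $|z|\to\infty$. With $t_z\to1$ (hence $\{t_z\}$ bounded, say by $M$) the first two terms in the displayed bound vanish, and for the last one I would note that $|t_zx+z|\geq|z|-M|x|\to\infty$ for each fixed $x$, so $V(t_zx+z)\to V_\infty$ pointwise while $|t_z^N V(t_zx+z)-V_\infty|\,|w|^2\leq\big((1+M^N)\max_{x\in\mathbb{R}^N}|V(x)|+V_\infty\big)|w|^2\in L^1(\mathbb{R}^N)$; dominated convergence then gives $\int_{\mathbb{R}^N}|t_z^N V(t_zx+z)-V_\infty|\,|w|^2\,dx\to0$, and combining the three estimates yields $\mathcal{I}(\Gamma[z])\to m_\infty$.

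The main obstacle is exactly the control of $t_z$: the convergences above are naturally formulated along sequences, so the argument must be organised as a subsequence argument — first the boundedness of $\{t_z\}$, then $t_z\to1$ via the Pohozaev identity — in order to deliver a genuine limit as $|z|\to\infty$; the accompanying dominated-convergence steps, in particular the pointwise decay $V(t_zx+z)\to V_\infty$ (which relies on both the boundedness of $t_z$ and $|z|\to\infty$), are routine once that ordering is respected.
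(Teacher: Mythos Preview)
Your argument is correct and complete. It differs from the paper's proof only in the way $\mathcal{I}(\Gamma[z])$ is expanded. The paper first uses the Pohozaev constraint $\Gamma[z]\in\mathcal{P}$ to rewrite
\[
\mathcal{I}(\Gamma[z])
=\frac{s}{N}\int_{\mathbb{R}^N}|\xi|^{2s}|\widehat{\Gamma[z]}(\xi)|^{2}\,d\xi
-\frac{1}{2N}\int_{\mathbb{R}^N}\langle\nabla V(x),x\rangle|\Gamma[z]|^{2}\,dx,
\]
then scales to get
$\mathcal{I}(\Gamma[z])=\frac{s}{N}t_{z}^{N-2s}\int|\xi|^{2s}|\hat{w}|^{2}\,d\xi
-\frac{t_{z}^{N}}{2N}\int\langle\nabla V(t_{z}x+z),t_{z}x+z\rangle|w|^{2}\,dx$,
and concludes by invoking $t_{z}\to1$ (already obtained in Proposition~\ref{p5-1}) together with $\langle\nabla V(t_{z}x+z),t_{z}x+z\rangle\to0$. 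This gives only two terms to control, since the first equals $t_{z}^{N-2s}m_\infty$ exactly. Your route instead recycles the three-term estimate from Proposition~\ref{p5-1} and re-proves $t_{z}\to1$ along sequences; this is a little longer but has the virtue of being self-contained and of making the dominated-convergence steps explicit. Either approach is fine, and the core ingredient---the convergence $t_{z}\to1$---is identical.
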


\begin{proof}[\bf Proof.]
Indeed , since $\Gamma[z]\in \mathcal{P}$,  the functional $\mathcal{I}$
can be written as
\begin{equation*}\label{eq4-31}
\aligned
\mathcal{I}(\Gamma[z])
&=\frac{1}{2}\int\limits_{\mathbb{R}^{N}}|\xi|^{2s}|\widehat{\Gamma[z]}|^{2}d\xi
+\frac{1}{2}\int\limits_{\mathbb{R}^{N}}V(x)|\Gamma[z]|^{2}dx
-\int\limits_{\mathbb{R}^{N}}F(\Gamma[z])dx\\
&=\frac{s}{N}\int\limits_{\mathbb{R}^{N}}|\xi|^{2s}|\widehat{\Gamma[z]}|^{2}d\xi
-\frac{1}{2N}\int\limits_{\mathbb{R}^{N}}\langle \nabla V(x),x\rangle|\Gamma[z]|^{2}dx.
\endaligned
\end{equation*}

Moreover, from $w\in \mathcal{P}_{\infty}$ and
$$\mathcal{I}_{\infty}(w)=\frac{s}{N}\int\limits_{\mathbb{R}^{N}}|\xi|^{2s}|\hat{w}(\xi)|^{2}d\xi=m_{\infty},$$ we have that
\begin{equation*}\label{eq4-31}
\aligned
\mathcal{I}(\Gamma[z])
&=\frac{s}{N}\int\limits_{\mathbb{R}^{N}}|\xi|^{2s}|\widehat{\Gamma[z]}|^{2}d\xi
-\frac{1}{2N}\int\limits_{\mathbb{R}^{N}}\langle \nabla V(x),x\rangle|\Gamma[z]|^{2}dx\\
&=\frac{s}{N}t^{N-2s}_{z}\int\limits_{\mathbb{R}^{N}}|\xi|^{2s}|\hat{w}|^{2}d\xi
-\frac{t_{z}^{N}}{2N}\int\limits_{\mathbb{R}^{N}}\langle\nabla V(t_{z}x+z),(t_{z}x+z)\rangle|w|^{2}dx.
\endaligned
\end{equation*}

Using $t_{z}\to 1$ and $\langle \nabla V(t_{z}x+z),(t_{z}x+z)\rangle\to 0$ as $|z|\to +\infty$,
we get $\mathcal{I}(\Gamma[z])\to m_{\infty}$ if $|z|\to +\infty$. We get the assertion.
\end{proof}

\begin{lemma}\label{l4-9}
Assume that $(V_5)$ holds. Then
\begin{equation}\label{eq4-40}
\mathcal{I}(\Gamma[z])<2\,m_{\infty}.
\end{equation}
\end{lemma}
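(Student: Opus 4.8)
The plan is to push the closed form for $\mathcal{I}(\Gamma[z])$ obtained in the proof of Lemma~\ref{l4-10} against the Pohozaev constraint that $\Gamma[z]$ satisfies. Write $t_z>0$ for the dilation factor, so that $\Gamma[z]=w\big(\tfrac{\cdot-z}{t_z}\big)\in\mathcal{P}$; from Lemma~\ref{l4-10} we have $m_\infty=\mathcal{I}_\infty(w)=\tfrac{s}{N}\int_{\mathbb{R}^{N}}|\xi|^{2s}|\hat w|^{2}\,d\xi$ and
\[
\mathcal{I}(\Gamma[z])=\frac{s}{N}\,t_z^{N-2s}\!\int_{\mathbb{R}^{N}}|\xi|^{2s}|\hat w|^{2}\,d\xi\;-\;\frac{t_z^{N}}{2N}\!\int_{\mathbb{R}^{N}}\langle\nabla V(t_zx+z),\,t_zx+z\rangle\,|w|^{2}\,dx .
\]
By $(V_2)$ the last integral is nonpositive, and $(V_1)$--$(V_3)$ give $V_\infty\le V\le V(0)$ on all of $\mathbb{R}^{N}$ (the upper bound comes from the radial monotonicity of $V$ encoded in $(V_2)$, the lower bound from $(V_1)$), so by $(V_3)$ the function $|\langle\nabla V(x),x\rangle|$ is bounded on $\mathbb{R}^{N}$ by the constant $A$ of $(V_5)$. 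Hence the first, elementary step is
\[
t_z^{N-2s}\,m_\infty\;\le\;\mathcal{I}(\Gamma[z])\;\le\;t_z^{N-2s}\,m_\infty+\frac{A}{2N}\,t_z^{N}\,\|w\|_{L^{2}(\mathbb{R}^{N})}^{2}.
\]

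The core of the argument is a bound for $t_z$ that is \emph{uniform in $z$}. I would substitute $\Gamma[z]=w\big(\tfrac{\cdot-z}{t_z}\big)$ into $\mathcal{J}(\Gamma[z])=0$, divide by $t_z^{N}$, and use $w\in\mathcal{P}_\infty$ to eliminate $\int F(w)$, which gives
\[
\aligned
\frac{N-2s}{2}\,t_z^{-2s}\!\int_{\mathbb{R}^{N}}|\xi|^{2s}|\hat w|^{2}\,d\xi
&=\frac{N-2s}{2}\!\int_{\mathbb{R}^{N}}|\xi|^{2s}|\hat w|^{2}\,d\xi\\
&\quad-\int_{\mathbb{R}^{N}}\Big[\tfrac12\langle\nabla V(t_zx+z),\,t_zx+z\rangle+\tfrac{N}{2}\big(V(t_zx+z)-V_\infty\big)\Big]|w|^{2}\,dx .
\endaligned
\]
By $(V_3)$ the bracket is $\ge 0$, which re-proves $t_z\ge1$ (Proposition~\ref{p3-2}(ii)); bounding it from above by a multiple of $A\,|w|^{2}$ --- this is where $(V_2)$, the bound $V\le V(0)$, and $(V_4)$ enter, to keep the quantities finite and with the correct sign --- and rearranging yields an inequality of the form
\[
1\;\le\;t_z^{2s}\;\le\;\frac{(N-2s)\,\|w\|_{H^{s}(\mathbb{R}^{N})}^{2}}{A\,\|w\|_{L^{2}(\mathbb{R}^{N})}^{2}}\;\le\;2^{\frac{2s}{N}},
\]
the last inequality being exactly $(V_5)$. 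Consequently $t_z^{N}=(t_z^{2s})^{N/(2s)}\le2$ and $t_z^{N-2s}=(t_z^{2s})^{(N-2s)/(2s)}\le2^{(N-2s)/N}$, uniformly in $z\in\mathbb{R}^{N}$.

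Inserting these two bounds into the first step gives $\mathcal{I}(\Gamma[z])\le 2^{(N-2s)/N}m_\infty+\tfrac{A}{N}\|w\|_{L^{2}}^{2}$, and a short computation --- again using the size of $A$ provided by $(V_5)$ --- shows the right-hand side is strictly below $2m_\infty$, which is \eqref{eq4-40}. The delicate point, and the one I would treat most carefully, is the second step: unlike in Lemma~\ref{l4-10} we may not send $|z|\to+\infty$, so the control of $t_z$ --- equivalently of $\int_{\mathbb{R}^{N}}\big(V(t_zx+z)-V_\infty\big)|w|^{2}\,dx$ and of $\int_{\mathbb{R}^{N}}\langle\nabla V(t_zx+z),t_zx+z\rangle|w|^{2}\,dx$ --- has to be genuinely uniform over $z$, and the bookkeeping must be organised so that the ratio produced is precisely the left-hand side of $(V_5)$; one should also note in passing that this ratio is automatically $\ge1$ because $t_z\ge1$.
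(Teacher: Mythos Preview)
Your second step --- the uniform bound $t_z^{2s}\le 2^{2s/N}$ obtained by playing the Pohozaev constraint $\mathcal{J}(\Gamma[z])=0$ off against $w\in\mathcal{P}_\infty$ --- is indeed the heart of the matter and is essentially the paper's argument. The gap is in your \emph{final} step. From
\[
\mathcal{I}(\Gamma[z])\;\le\;2^{(N-2s)/N}m_\infty+\frac{A}{N}\|w\|_{L^{2}}^{2}
\]
you claim that ``a short computation --- again using the size of $A$ provided by $(V_5)$ --- shows the right-hand side is strictly below $2m_\infty$''. But $(V_5)$, read with $A>0$, rearranges to $A\|w\|_{L^{2}}^{2}\ge (N-2s)\,2^{-2s/N}\|w\|_{H^{s}}^{2}$, which is a \emph{lower} bound on $A\|w\|_{L^{2}}^{2}$, not an upper one. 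If $A$ happens to be large, $(V_5)$ is satisfied automatically while your right-hand side can be far above $2m_\infty$. No short computation closes this; the splitting you chose simply loses too much.

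The paper organises the estimate differently and thereby avoids the problem. Instead of the identity $\mathcal{I}(\Gamma[z])=t_z^{N-2s}m_\infty+\tfrac{t_z^{N}}{2N}B_z$ and a crude bound $B_z\le A\|w\|_{L^{2}}^{2}$, it writes
\[
\mathcal{I}(\Gamma[z])=\mathcal{I}_\infty(\Gamma[z])+\tfrac12\!\int_{\mathbb{R}^{N}}(V(x)-V_\infty)\,|\Gamma[z]|^{2}\,dx,
\]
and, using $t_z>1$ together with $N-2s<N$, collapses the first piece in one stroke:
\[
\mathcal{I}_\infty(\Gamma[z])=\frac{t_z^{N-2s}}{2}\!\int|\xi|^{2s}|\hat w|^{2}+\frac{t_z^{N}}{2}V_\infty\|w\|_{L^{2}}^{2}-t_z^{N}\!\int F(w)\;<\;t_z^{N}\,\mathcal{I}_\infty(w)=t_z^{N}m_\infty .
\]
Thus the single inequality $t_z^{N}<2$ (produced by $(V_5)$, exactly as in your second step) already gives $\mathcal{I}_\infty(\Gamma[z])<2m_\infty$; the remaining potential term $\tfrac12\int(V-V_\infty)|\Gamma[z]|^{2}$ is then handled separately as a small remainder. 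The point is that the paper never pays the price of the gradient term $B_z$ multiplied by the worst-case constant $A$; by grouping all three pieces of $\mathcal{I}_\infty$ under the common factor $t_z^{N}$ it preserves the cancellation that your two-term split destroys. If you want to salvage your route, you must feed the Pohozaev relation back into $B_z$ rather than bound it by $A\|w\|_{L^{2}}^{2}$.
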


\begin{proof}[\bf Proof.]
First, we get
\begin{equation}\label{eq4-42}
\aligned
\mathcal{I}(\Gamma[z])
&=\frac{1}{2}\int\limits_{\mathbb{R}^{N}}|\xi|^{2s}|\widehat{\Gamma[z]}(\xi)|^{2}d\xi
+\frac{1}{2}\int\limits_{\mathbb{R}^{N}}V(x)|\Gamma[z]|^{2}dx-\int\limits_{\mathbb{R}^{N}}F(\Gamma[z])dx\\
&=\mathcal{I}_{\infty}(\Gamma[z])+\frac{1}{2}\int\limits_{\mathbb{R}^{N}}(V(x)-V_{\infty})|\Gamma[z]|^{2}dx\\
&=\frac{t_{z}^{N-2s}}{2}\int\limits_{\mathbb{R}^{N}}|\xi|^{2s}|\hat{w}(\xi)|^{2}d\xi
+\frac{t_{z}^{N}}{2}\int\limits_{\mathbb{R}^{N}}V_{\infty}|w|^{2}dx-t_{z}^{N}\int\limits_{\mathbb{R}^{N}}F(w)dx\\
&+\frac{1}{2}\int\limits_{\mathbb{R}^{N}}(V(x)-V_{\infty})|w(\frac{x-z}{t_{z}})|^{2}dx\\
&<t_{z}^{N}\mathcal{I}_{\infty}(w)+\frac{1}{2}\int\limits_{\mathbb{R}^{N}}(V(x)-V_{\infty})|w(\frac{x-z}{t_{z}})|^{2}dx\\
&=t_{z}^{N}\,m_{\infty}+\frac{1}{2}\int\limits_{\mathbb{R}^{N}}(V(x)-V_{\infty})|w(\frac{x-z}{t_{z}})|^{2}dx.\\
\endaligned
\end{equation}

By condition $(V_1)$, there is a $R>0$ such that $|V(x)-V_{\infty}|<\frac{\varepsilon}{2\|w\|^{2}_{L^{2}(\mathbb{R}^{N})}}$
for all $|x|>R$, then we have
\begin{equation*}\label{eq4-43}
\aligned
&\frac{1}{2}\int\limits_{\mathbb{R}^{N}}(V(x)-V_{\infty})|w(\frac{x-z}{t_{z}})|^{2}dx\\
&=\int\limits_{B_{R}(0)}(V(x)-V_{\infty})|w(\frac{x-z}{t_z})|^{2}dx
+\int\limits_{\mathbb{R}^{N}\setminus B_{R}(0)}(V(x)-V_{\infty})|w(\frac{x-z}{t_z})|^{2}dx\\
&\leq (|V_{\infty}|+\max\limits_{B_{R}(0)}|V(x)|)\int\limits_{B_{R}(0)}|w(\frac{x-z}{t_{z}})|^{2}dx
+ \frac{\varepsilon}{2\|w\|^{2}_{L^{2}(\mathbb{R}^{N})}} \int\limits_{\mathbb{R}^{N}}|w(\frac{x-z}{t_z})|^{2}dx\\
&= (|V_{\infty}|+\max\limits_{B_{R}(0)}|V(x)|)\int\limits_{B_{R}(-\frac{z}{t_z})}|w(x)|^{2}dx
+\frac{\varepsilon}{2}.\\
\endaligned
\end{equation*}
Pick $\varepsilon>0$ and choose $\bar{R}>0$ such that
$$\int\limits_{B_{R}(-\frac{z}{t_z})}|w(x)|^{2}dx\leq\frac{\varepsilon}{2(|V_{\infty}|+\max\limits_{B_{R}(0)}|V(x)|)}$$
for any $z\in \mathbb{R}^{N}$ and any $R<\bar{R}$. Hence
\begin{equation}\label{eq4-44}
\frac{1}{2}\int\limits_{\mathbb{R}^{N}}(V(x)-V_{\infty})|w(\frac{x-z}{t_{z}})|^{2}dx=o(1).
\end{equation}
Therefore, taking account of \eqref{eq4-42} and \eqref{eq4-44},
in order to prove \eqref{eq4-40} it is enough to show $t_{z}^{N}<2$
for all $z\in \mathbb{R}^{N}$. Let us observe that $t_{z}$ is chosen such that
\begin{equation*}
\aligned
\frac{N-2s}{2}t_{z}^{N-2s}\int\limits_{\mathbb{R}^{N}}|\xi|^{2s}|\hat{w}(\xi)|^{2}d\xi
&+\frac{t_{z}^{N}}{2}\int\limits_{\mathbb{R}^{N}}\langle   \nabla V(t_{z}x+z),(t_{z}x+z)\rangle|w|^{2}dx\\
&+\frac{Nt_{z}^{N}}{2}\int\limits_{\mathbb{R}^{N}}V(t_{z}x+z)|w|^{2}dx
=Nt_{z}^{N}\int\limits_{\mathbb{R}^{N}}F(w)dx\\
\endaligned
\end{equation*}
holds. By $t_{z}> 1$, $w\in \mathcal{P}_{\infty}$ and \eqref{eq4-44}, we deduce

\begin{equation*}
\aligned
\frac{N-2s}{2t^{2s}_{z}}\int\limits_{\mathbb{R}^{N}}|\xi|^{2s}|\hat{w}(\xi)|^{2}d\xi
&+\frac{1}{2}\int\limits_{\mathbb{R}^{N}}\langle   \nabla V(t_{z}x+z),(t_{z}x+z)\rangle|w|^{2}dx\\
&=\frac{N-2s}{2}\int\limits_{\mathbb{R}^{N}}|\xi|^{2s}|\hat{w}(\xi)|^{2}d\xi+o(1)>0,\\
\endaligned
\end{equation*}
that is,
$$
t_{z}^{2s}<\frac{(N-2s)\int_{\mathbb{R}^{N}}|\xi|^{2s}|\hat{w}(\xi)|^{2}d\xi}{-\int_{\mathbb{R}^{N}}\langle   \nabla V(t_{z}x+z),(t_{z}x+z)\rangle|w|^{2}dx}
<\frac{(N-2s)\|w\|^{2}_{H^{s}(\mathbb{R}^{N})}}{A\|w\|_{L^{2}(\mathbb{R}^{N})}^{2}}.
$$
Then by $(V_5)$, $t^{N}_{z}<2$ as desired.
\end{proof}

We are ready to prove Theorem \ref{th1-2}.

\begin{proof}[\bf {Proof of Theorem \ref{th1-2}}]
By Lemma \ref{l4-8*} and \ref{l4-10}, there exists $\bar{\rho}>0$ such that
for any $\rho\geq \bar{\rho}$,
\begin{equation}\label{eq4-32}
m_{\infty}<\max\limits_{|z|=\rho}\mathcal{I}(\Gamma[z])<b.
\end{equation}

In order to apply Linking Theorem \ref{th4-1}, we take
$$Q:=\Gamma(\overline{B}_{\bar{\rho}}(0))\,\,\,\text{and} \,\,\, S:=\{u\in \mathcal{P}:\,\beta(u)=0\}.$$

We claim that  $S$ and $\partial Q$ link, that is,
\begin{equation}\label{eq4-41}
\aligned
&(1)\,\, \partial Q\cap S=\emptyset;\\
&(2)\,\, h(Q)\cap S\neq \emptyset,\quad
\forall h\in \mathcal{H}:=\{h\in C(Q\,,\,\mathcal{P}):\,h|_{\partial Q}=\text{id}\}\\
\endaligned
\end{equation}
hold.

For \eqref{eq4-41} (1), observing that for any  $u\in \partial Q$ there exists $z$ such that $|z|=\bar{\rho}$
and $u=\Gamma[z]$, by \eqref{eq4-30}, we have $\beta(u)=\beta(\Gamma[z])=z\neq 0$.

For \eqref{eq4-41} (2), let us consider $h\in \mathcal{H}$ and define
$T:\,\overline{B}_{\bar{\rho}}(0)\to \mathbb{R}^{N}$ by
$$T(z)=\beta\circ h\circ \Gamma[z].$$
Then, $T$ is a continuous function. Moreover, for any $|z|=\bar{\rho}$,
we have $\Gamma[z]\in \partial Q$, and hence $h\circ \Gamma[z]=\Gamma[z]$
because $h|_{\partial Q}=\text{id}$. Thus,
$$T(z)=\beta\circ h \circ\Gamma [z]=\beta(\Gamma[z])=z.$$

By Brower fixed point theorem, we conclude that there exists $\bar{z}\in B_{\bar{\rho}}(0)$
such that $T(\bar{z})=\beta(h\circ \Gamma[\bar{z}])=0$,
 which means that $h\circ \Gamma[\bar{z}]\in S$.
Now we have $h(Q)\cap S\neq \emptyset$.

Furthermore, from the definition of $b$ and $Q$,  the inequalities \eqref{eq4-32} can be written as
$$b=\inf\limits_{S}\mathcal{I}>\max\limits_{\partial Q}\mathcal{I}.$$
Let us define $$d:=\inf\limits_{h\in \mathcal{H}}\max\limits_{u\in Q}\mathcal{I}(h(u)).$$
Then by \eqref{eq4-41} (2),  we get $d\geq b>m_{\infty}$.
Moreover, taking $h=\text{id}$ and by Lemma \ref{l4-9}, we obtain
$$d=\inf\limits_{h\in \mathcal{H}}\max\limits_{u\in Q}\mathcal{I}(h(u))\leq \max\limits_{u\in Q}\mathcal{I}(u)<2m_{\infty}.$$
This implies $m_{\infty}<d<2\,m_{\infty}$.
By Lemma \ref{l4-7}, the $(PS)_{d}$-condition holds in $(m_{\infty}\,,\,2\,m_{\infty})$,
and Linking Theorem tell us that $d$ is a critical value of the functional $\mathcal{I}$.
 This guarantees the existence of a nontrivial solution $u\in E$ of Eq. (FSE).
Because of hypotheses on $f$, and applying the maximum principle we may conclude as usual that $u$
is positive, which finishes the proof of Theorem \ref{th1-2}.
\end{proof}

\end{document}